\documentclass{article}
\usepackage[T1]{fontenc}
\usepackage[utf8]{inputenc}
\usepackage[british]{babel}
\usepackage{amsmath,amssymb,amsthm, xcolor}
\usepackage[all]{xy}
\usepackage{comment}

\newtheorem{theo}{Theorem}[section]
\newtheorem{prop}[theo]{Proposition}
\newtheorem{lem}[theo]{Lemma}
\newtheorem{cor}[theo]{Corollary}
\newtheorem{quest}[theo]{Question}

\theoremstyle{definition}
\newtheorem{defi}[theo]{Definition}
\newtheorem{ex}[theo]{Example}

\theoremstyle{remark}
\newtheorem{rem}{Remark}

\DeclareMathOperator{\Aut}{Aut}
\DeclareMathOperator{\Soc}{Soc}
\DeclareMathOperator{\Ret}{Ret}

\title{Chain conditions on skew braces and solutions of the Yang-Baxter Equation}
\author{M. Di Matteo \and R. Esteban-Romero\and M. Ferrara \and V. P\'erez-Calabuig}
\date{}

\begin{document}

\maketitle

\begin{abstract}
Classical works of Hall and McLain show that solubility and local nilpotency play a key role in deriving finite generation in groups from maximal or minimal conditions on normal subgroups. In this work, brace-theoretical analogues of Hall's and McLain's results are analysed for skew braces satisfying the maximal or minimal condition on ideals. We also introduce finiteness and chain conditions on non-degenerate set-theoretic solutions of the Yang-Baxter equation, and their impact on associated structure and permutation skew braces of solutions is also described.
\end{abstract}

\emph{Mathematics Subject Classification (2020): 16T25, 20F19}  

\emph{Keywords: i-noetherian, i-artinian, finitely generated skew braces, finitely generated solutions, min-con solutions, max-con solutions} 

\section{Introduction}

Skew braces are a novel algebraic structure which has its origins in the classification of non-degenerate set-theoretic solutions of the Yang-Baxter equation (solutions of the YBE, for short). The foundations of such a rich interconnection lie on the well-known fact that every skew brace provides a solution, and every solution is controlled by the solution provided by two key skew braces associated with every solution: the so-called structure skew brace and permutation skew brace of a solution. Consequently, algebraic properties of skew braces can be translated to properties of solutions, and viceversa.

The motivation of this article comes from an important fact: the structure skew brace of a solution is always infinite 
(see Proposition~\ref{prop:G(X,r)infinite}). Thus, as every algebraic theory of an infinite structure, the algebraic theory of infinite skew braces calls for the analysis of finiteness conditions in order to get a tighter grip on their internal structural nature. One of the most fruitful approaches in this sense is to consider maximal and minimal conditions on chains of substructures of a skew brace: skew braces satisfying the minimal condition on ideals (resp. subbraces), or \emph{i-artinian} (resp. \emph{s-artinian}) skew braces; and skew braces satisfying the maximal condition on ideals (resp. subbraces), or \emph{i-noetherian} (resp. \emph{s-noetherian}) skew braces. Due to the dual nature of skew braces, oscillating between groups and rings, some brace-theoretical analogues of outstanding results in ring theory and group theory have been obtained for i-artinian skew braces (see~\cite{JespersKubatVanAntwerpenVendramin21} and~\cite{i-artin}), and i-noetherian skew braces (see~\cite{i-noeth}), as well as structural results are obtained for s-artinian skew braces (see~\cite{s-artin}). Despite the progress achieved, there is still considerable work ahead, as these generalisations are by no means mere formal extensions.

In this light, Hall and McLain's seminal articles on finiteness conditions (see~\cite{Hall54} and~\cite{McLain56}) show that soluble groups and locally nilpotent groups constitute a suitable setting for the study of finiteness conditions in groups: every soluble group satisfying the maximal condition on normal subgroups is finitely generated; and for locally nilpotent groups, the maximal and the minimal condition on normal subgroups are equivalent to the maximal and the minimal condition on subgroups, respectively. 

Our first main aim in this work is to analyse brace-theoretical analogues of Hall and McLain's results within the context of soluble skew braces and nilpotency notions of skew braces, as these algebraic properties constitute an algebraic framework particularly well suited to the analysis of two of the most extensively studied properties of solutions: decomposability and multipermutability.
We prove that soluble i-noetherian skew braces are finitely generated (Theorems~\ref{teo:Hall-soluble-BRACE} and~\ref{teo:Hall-soluble-TWO-SIDEDskewbrace}), respectively for the outstanding classes of braces ---i.e. skew braces with abelian additive group--- and two-sided skew braces, which are closed to be Jacobson radical rings (see~\cite{Rump07}). As for notions of nilpotency, we show that every locally multipermutational i-noetherian skew brace is also finitely generated (Theorem~\ref{teo:McLain-local-multiperm}). For central nilpotency, which is a stronger property than multipermutability, we can say more: finitely generated centrally nilpotent skew braces are s-noetherian, so that for locally centrally nilpotent skew braces, i-noetherian, s-noetherian and being finitely generated turn out to be equivalent (Corollary~\ref{cor:McLain-local-central}). Moreover, an example shows that the converse does not hold for multipermutational skew braces (Example~\ref{ex:skbr}). 

In \cite{McLain56}, McLain has also investigated the connection between the minimal conditions on subgroups and on normal subgroups in the setting of locally nilpotent groups. The main obstacle to a complete extension of his result is that the skew brace analogue of a general property concerning finite indices in groups remains one of the most difficult open problems: given a finite index subbrace $S$ of a skew brace $B$, does its \textit{core} (the maximal ideal of $B$ contained in $S$) have finite index too? This question was also addressed in~\cite{VanAntwerpen2026} for two-sided skew braces. An alternative proof is presented here (see Lemma~\ref{lem:core_two-sided}). Thus, we prove that for locally multipermutational two-sided skew braces, i-artinian and s-artinian are equivalent properties (Theorem~\ref{theo:minid}).

The second main aim of this work is to initiate the study of finiteness conditions for solutions of the YBE, as they can be useful for the classification of infinite-dimensional general solutions of the YBE. We introduce finite generation and locality for solutions, as well as chain conditions on subsolutions and congruences, and we delve into the study of the interplaying between finiteness conditions on solutions and those on the associated structure and permutation skew braces (see Section~\ref{sec:finite-cond-sol} for further details). In this framework, special attention should be paid to the analysis of finiteness conditions for an outstanding class of solutions: the so-called Lyubashenko solutions. Our results show that, with regard to finiteness conditions, such solutions behave in a way that is remarkably close to the class of abelian groups: we prove that for a general Lyubashenko solution the maximal conditions on subsolutions and on congruences, and the minimal condition on subsolutions are all equivalent to being the solution finitely generated (see Corollary~\ref{cor:lyusol} and Theorem~\ref{teo:max-con-equiv-lyusol}). Nevertheless, the minimal condition on congruences exhibits a different behavior: every Lyubashenko solution satisfying the minimal condition on congruences must be finite (Theorem~\ref{teo:min-con-lyusol}), and there exists a one-generated Lyubashenko solution which does not satisfy the minimal condition on congruences (see Example~\ref{ex:lyusol-notmin}).

\section{Preliminaries}
This section is divided into two parts, one concerning well-known results of skew braces, and the other one concerning solutions, which contains new results.

\subsection{Skew braces}

A \emph{skew brace} is defined as a set with two group operations $(B,+,\cdot)$ such that it satisfies a (left) distributivity property: $a\cdot (b+c) = a\cdot b - a + a\cdot c$, for every $a,b,c\in B$. We write products with juxtaposition and from the distributivity property it holds that both identity elements in $(B,+)$ and $(B,\cdot)$ coincide. We denote it by~$0$. Moreover, if $\mathfrak{X}$ is a class of groups, $B$ is said to be of $\mathfrak{X}$-type if $(B,+)\in \mathfrak{X}$ --- skew braces of abelian type are also known simply as \emph{braces}. A \emph{two-sided} skew brace satisfies both previous left distributivity property and also right distributivity property: $(b+c)a = ba - a + ca$ for every $a,b,c\in B$.

Both group operations in a skew brace are related by the so-called $\lambda$-action provided by a homomorphism $\lambda\colon a \in (B,\cdot)\mapsto \lambda_a\in \Aut(B,+)$, with $\lambda_a(b) = -a+ab$ for every $b\in B$, and the so-called star operation $a \ast b= -a + ab - b = \lambda_a(b) - b$ for every $a,b\in B$. If $X,Y$ are subsets of $B$, $X\ast Y$ is the additively generated subgroup $\langle x\ast y \mid x\in X, y \in Y\rangle_+$. The importance of two-sided skew braces is that they are close to be Jacobson radical rings. Indeed, for braces $B$, $(B,+,\cdot)$ is two-sided if, and only if, $(B,+,\ast)$ is a Jacobson radical ring (see~\cite{Rump07}).

A subset $S\subseteq B$ is a \emph{skew subbrace}, or simply a \emph{subbrace}, if each $(S,+)$ and $(S,\cdot)$ are subgroups of $(B,+)$ and $(B,\cdot)$, respectively. We write $S\leq B$. A subbrace $I$ is an \emph{ideal} if  $(I,+) \unlhd (B,+)$ and $B\ast I, I\ast B \subseteq I$. It is also equivalent to say that $(I,+)\unlhd (B,+)$, $(I,\cdot)\unlhd (B,\cdot)$ and $I$ is $\lambda$-invariant. In that case, we write $I \unlhd B$, and it turns out that $bI = b+I$ for every $b\in B$, so that $(B/I,+,\cdot)$ is a skew brace. Clearly, it also holds that $\lambda_{aI}(bI) = \lambda_a(b)I$ for every $a,b\in B$. If $I$ and $J$ are ideals of $B$, such that $J\leq I$, we say that $I/J$ form a \emph{chief factor} of $B$ if $I/J$ is minimal in $B/J$. In two-sided skew braces, multiplicative conjugations are also automorphisms of the additive group (\cite[Lemma~4.1]{Nasybullov19}). Thus, characteristic subgroups of the additive group of an ideal are also ideals.

A map $f\colon B_1 \rightarrow B_2$ is a \emph{homomorphism} of skew braces if $f$ preserves sums and products. We say that $f$ is an epimorhism (resp. isomorphism) if $f$ is surjective (resp. bijective). It turns out that $\ker f = \{a\in B_1\mid f(a) = 0_{B_2}\}$ is an ideal of~$f$ and $B_1/\ker f$ is isomorphic to~$f(B_2)$.

Given $X$ a subset of $B$, we write 
\begin{itemize}
\item $\langle X\rangle = \bigcap \{S\leq B\mid X \subseteq S\}$ the subbrace generated by~$X$;
\item $X^B = \bigcap \{S \unlhd B \mid X \subseteq S\}$ the ideal generated by~$X$;
\item if $S \leq B$, $S_B$ the maximal ideal of $B$ contained in~$S$.
\end{itemize}
In all cases, if $X$ is a singleton set $\{a\}$, we write just $a$ instead of $\{a\}$. Moreover, if $X$ is finite, we say that $\langle X \rangle$ is finitely generated. If we need to emphasise generation with respect to some group operation $+$ or $\cdot$ we put the corresponding symbol next to it.

Nilpotency notions (see~\cite{CedoSmoktunowiczVendramin19} and~\cite{BallesterEstebanFerraraPerezCTrombetti25-pacificjm-cent-nilp}) are associated with two fundamental ideals of a skew brace $B$: 
\begin{align*}
\Soc(B) & = \{a \in B \mid ab = a+b = b+a, \ \forall b \in B\} = \\
& = \{a \in B \mid a \ast b = [a,b]_+ = 0, \ \forall b\in B\} = \ker \lambda \cap \mathrm{Z}(B,+)\\
\zeta(B) & = \{a \in B \mid ab = ba = a+b = b+a,\ \forall b\in B\} = \\
& = \{a\in B \mid a \ast b = b \ast a = [a,b]_+ = 0,\ \forall b\in B\} = \Soc(B) \cap \mathrm{Z}(B,\cdot)
\end{align*}
With a general construction, it is possible to define two ascending ordinal series $\{\Soc_{\alpha}(B)\}_{\alpha\in\mathcal{O}}$ and $\{\zeta_{\alpha}(B)\}_{\alpha\in\mathcal{O}}$ starting from $\Soc_0(B) = \zeta_0(B) = 0$. They are called, respectively, the \textit{socle series} and the \textit{central series}, and the last terms are called  \textit{hypersocle} and  \textit{hypercenter}, respectively. We say that 
\begin{itemize}
    \item $B$ is \emph{multipermutational} of level $n$ if $n$ is the first natural number such that $\operatorname{Soc}_n(B)=B$;
    \item $B$ is \emph{hypermultipermutational} if the hypersocle of $B$ is equal to~$B$;
    \item $B$ is \emph{locally multipermutational} if any finitely generated subbrace has finite multipermutational level.
    \item $B$ is \emph{locally hypermultipermutational} if every finitely generated subbrace is hypermultipermutational;
    \item $B$ is \emph{centrally nilpotent} if there is a finite natural number $n$ such that $\zeta_n(B)=B$;
    \item $B$ is \emph{hypercentral} if the hypercenter of $B$ is equal to~$B$;
    \item $B$ is \emph{locally centrally nilpotent} if every finitely generated subbrace is centrally nilpotent.
\end{itemize}
If $B$ is multipermutational of level $n$, then $B$ is of nilpotent type and \emph{right nilpotent}, that is, the so-called \emph{right series} of~$B$ given by
\[ B^{(1)} = B \geq B^{(2)} = B\ast B \geq \cdots \geq B^{(m)} = B^{(m-1)}\ast B \geq \cdots \]
satisfies that $B^{(n+1)} = 0$. Moreover, it holds that $B^{(n+1-k)} \leq \Soc_k(B)$ for every $0 \leq k \leq n$. It turns out that every term of the right series of $B$ is an ideal.

A skew brace is said to be \emph{abelian} if $\zeta(B) = \Soc(B) = B$. Then, \emph{soluble} skew braces are introduced in~\cite{BallesterEstebanJimenezPerezC24-solubleskewbraces}, as those ones admitting an abelian series, i.e. a series of ideals of $B$:
\[0 = I_0 \leq I_1 \leq \cdots \leq I_n = B\]
such that $I_j/I_{j-1}$ is abelian for every $1\leq j \leq n$. The \emph{derived length} of a soluble skew braces is the smallest length $n$ of such an abelian series.


\subsection{Solutions of the YBE}
A set-theoretic solution of the YBE is a pair $(X,r)$, where $X$ is a non-empty set and $r\colon X \times X \rightarrow X \times X$ is a map satisfying that 
\begin{equation}
\label{eq:braided_eq}
r_{12}r_{23}r_{12}=r_{23}r_{12}r_{23},
\end{equation}
where $r_{12}=r\times \operatorname{id}_X$ and $r_{23}=\operatorname{id}_X\times r$. We write $r(x,y) = (\lambda_x(y),\rho_y(x))$ for every $x,y\in X$, the components of~$r$. Then, $(X,r)$ is said to be non-degenerate if $\lambda_x$ and $\rho_x$ are bijective maps for any $x\in X$. According to~\cite{JedlickaPilitowska26}, every non-degenerate set-theoretic solution is bijective, and $(X,r)$ is said to be \emph{involutive} if $r^{-1} = r$. We use the word \emph{solution} for referring to non-degenerate solution.

\begin{ex}[Lyubashenko's solution]
Let $X$ be a set and let $\lambda$, $\rho$ be permutations of $X$ such that $\lambda\rho = \rho\lambda$. Then, $(X,r)$ is a solution with $r(x,y) = (\lambda(y), \rho(x))$, i.e. $\lambda_x = \lambda$ and $\rho_x = \rho$ for every $x\in X$. We say that $(X,r)$ is a \emph{Lyubashenko solution}. In particular, if $\lambda_x = \rho_x = \mathrm{id}_X$ for every $x\in X$, then $r(x,y) = (y,x)$ for every $x,y\in X$, and $(X,r)$ is also called a \emph{twist solution}.
\end{ex}

A \emph{subsolution} of a solution $(X,r)$ is given by a subset $Y\subseteq X$ such that $(Y,r_{|_{Y}})$ is still a solution, or equivalently, $r(Y\times Y)= Y\times Y$. We write $(Y,r)\leq (X,r)$. A \emph{congruence} of a solution $(X,r)$ is an equivalence relation $\sim$ on $X$, such that $(\overline{X},\overline{r})$ is still a solution of the YBE, where $\overline{X}=X/{\sim}$ and for any $x,y\in X$ 
$$\overline{r}([x]_{\sim},[y]_{\sim})=([\lambda_x(y)]_{\sim},[\rho_{y}(x)]_{\sim}).$$ 
This is equivalent to requiring that for any $x_1,x_2,y_1,y_2\in X$
\[
\begin{array}{c}
x_1\sim x_2\\
y_1\sim y_2 
\end{array} \quad \Longleftrightarrow \quad 
\begin{array}{c}
\lambda_{x_1}(y_1)\sim \lambda_{x_2}(y_2)\\
\rho_{y_1}(x_1)\sim\rho_{y_2}(x_2)
\end{array}
\]
A \emph{homomorphism} (resp. epimorphism, isomorphism) of solutions $f \colon (X,r) \rightarrow (Y,s)$ is a map (resp. surjective, bijective map) for which the equivalence relation $x\ker f y$ if, and only if, $f(x) = f(y)$ is a congruence. It is equivalent to say that the following diagram commutes:
\begin{equation}
\label{eq:diagram_hom_solutions}
\xymatrix{X \times X \ar[d]_{f\times f} \ar[r]^{r}  & X \times X  \ar[d]^{f \times f} \\ Y \times Y \ar[r]_{s} & Y \times Y}
\end{equation}
We say that $(X/\ker f, \bar{r})$ is isomorphic to the subsolution $(f(X), \left.s\right|_{f(X)})$ of $(Y,s)$. 

Every skew brace $B$ provides a solution, the so-called solution associated with~$B$: $(B, r_B)$, defined  by $r_B(a,b) = (\lambda_a(b), \rho_b(a))$ for every $a,b\in B$, where $\rho\colon b \in (B,\cdot) \mapsto \rho_b\in \mathrm{Sym}_B$ is an antihomomorphism given by 
\[ \rho_b(a) = (a^{-1}+b)^{-1}b \quad \text{for every $a,b\in B$.}\]
For instance, if $B$ is an abelian skew brace, $\lambda_a = \rho_a = \mathrm{id}_B$ for every $a\in B$, and therefore, $(B,r_B)$ is a twist solution.

On the other hand, every solution $(X,r)$ has associated the group 
\begin{equation}
\label{eq:structure_group}
G(X,r) = \langle X \mid xy = \lambda_x(y)\rho_y(x), \ \forall x,y\in X \rangle,
\end{equation}
called the structure group of $(X,r)$. We may write $G_X:= G(X,r)$ for the sake of simplicity.

\begin{prop}
\label{prop:G(X,r)infinite}
Every generator element in $G(X,r)$ is torsion free. In particular, $G(X,r)$ is an infinite group.
\end{prop}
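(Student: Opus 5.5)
Since every defining relation $xy=\lambda_x(y)\rho_y(x)$ of~\eqref{eq:structure_group} has length two on both sides, I would define a homomorphism $\deg\colon G(X,r)\to(\mathbb{Z},+)$ by $\deg(x)=1$ for every $x\in X$. To check that this is well defined, I would start from the free group $F(X)$ and the homomorphism $F(X)\to\mathbb{Z}$ that sends every free generator to~$1$. The relator $xy\,(\lambda_x(y)\rho_y(x))^{-1}$ is sent to $1+1-1-1=0$. So the homomorphism vanishes on the normal closure of the relators, and it factors through $G(X,r)$. No property of $r$ is used here beyond the fact that it maps $X\times X$ into $X\times X$, so non-degeneracy and the braid relation~\eqref{eq:braided_eq} are not needed.

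Next I would take the image $\bar x$ of a generator $x\in X$ in $G(X,r)$ and suppose that $\bar x^{\,n}=1$ for some $n\geq 1$. Applying $\deg$ gives $n=\deg(\bar x^{\,n})=\deg(1)=0$, which is a contradiction. Hence $\bar x$ has infinite order. In particular $\bar x\neq 1$, and $\bar x$ generates an infinite cyclic subgroup, so $G(X,r)$ is infinite. Because $X$ is non-empty by definition of a solution, such a generator always exists.

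The whole argument reduces to the well-definedness check, which is immediate. The only subtle point is to read ``torsion free'' as ``of infinite order'', and $\deg$ gives exactly that. If one also wants torsion-freeness of the whole group, which is known for involutive solutions via the Bieberbach/Garside theory, that stronger claim would need a different and much harder argument. I would not attempt it, since the statement only concerns generators.
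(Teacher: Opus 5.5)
Your proposal is correct and follows essentially the paper's argument. The paper notes that every relator $xy\rho_y(x)^{-1}\lambda_x(y)^{-1}$ has exponent sum zero and concludes $x^n\notin N$; your degree homomorphism $F(X)\to\mathbb{Z}$, which kills the normal closure of the relators and hence factors through $G(X,r)$, is the same idea with the factorisation made explicit.
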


\begin{proof}
The set of relators in $G(X,r)$ is given by $R = \{xy\rho_y(x)^{-1}\lambda_x(y)^{-1}\mid x,y\in X\}$. Then, $G(X,r) \cong F_X/N$, where $N$ is the normal subgroup generated by $R$ in the free group over~$X$: $F_X$. Observe that the exponent sum of every word in $R$ is zero, and therefore, $x^n \notin N$ for every $x\in X$ and every $n\in \mathbb{N}$. Hence, every generator element is torsion free.
\end{proof} 

Following~\cite{LuYanZhu00}, it turns out that $G(X,r)$ admits a skew brace structure $(G(X,r),+,\cdot)$. We call $G(X,r)$ the \emph{structure skew brace} of $(X,r)$, and the associated solution satisfies the following commuting diagram
\begin{equation}
\label{eq:commuting_diagram_X-G(X,r)}
\xymatrix{X \times X \ar[d]_{\iota\times \iota} \ar[r]^{r}  & X \times X  \ar[d]^{\iota \times \iota} \\ G_X \times G_X  \ar[r]_{r_{G_X}} & G_X \times G_X}
\end{equation}
where $\iota\colon x\in X \mapsto \iota(x) = x\in G(X,r)$ is a homomorphism of solutions not necessarily injective. If $\iota$ is injective, $(X,r)$ is said to be \emph{injective}. We call $\mathrm{Inj}(X,r)$ the quotient of $(X,r)$ by the kernel congruence: $x\ker\iota\, y$ if, and only if, $\iota(x) = \iota(y)$. Clearly, $G(X,r) \cong G(\mathrm{Inj}(X,r))$.

\begin{ex}
\label{ex:involutive-freeabelian}
If $(X,r)$ is involutive, $(X,r)$ is injective and it turns out that $(G(X,r),+)$ is isomorphic to the free abelian group over~$X$ (see~\cite{EtingofSchedlerSoloviev99}).
\end{ex}

Furthermore, following Theorem~4 of~\cite{LuYanZhu00}, the pair $(G(X,r),\iota)$ satisfies the following universal property: if $B$ is a skew brace and $f\colon (X,r) \rightarrow (B,r_B)$ is a homomorphism of solutions, then there exists a unique skew brace homomorphism $g\colon G_X \rightarrow B$ such that the following diagrams commute
\begin{equation}
\label{eq:universal_prop_brace}
\xymatrix{X \ar[r]^{\iota} \ar[d]_{f} & G_X \ar[dl]^{g} \\B & } \qquad \xymatrix{G_X \times G_X \ar[d]_{g\times g} \ar[r]^{r_{G_X}}  & G_X \times G_X  \ar[d]^{g\times g} \\ B \times B  \ar[r]_{r_B} & B \times B}
\end{equation}
It turns out that if $f\colon (X,r) \rightarrow (Y,s)$ is a homomorphism of solutions, then from the previous universal property applied to $\iota_Y \circ f \colon X \rightarrow G(Y,s)$, there exists a unique skew brace homomorphism $\bar f \colon G(X,r) \rightarrow G(Y,s)$ such that the following diagram commutes:
\[
\xymatrix{X \ar[d]^{\iota_X} \ar[r]^{f}  & Y \ar[d]^{\iota_Y} \\ G(X,r) \ar[r]_{\bar{f}}  & G(Y,s) }
\]

If $G(X,r)$ is the structure skew brace of a solution $(X,r)$, we can define a homomorphism of multiplicative groups
\[ \phi_{G_X} \colon G(X,r) \rightarrow \langle (\lambda_x, \rho_x^{-1}) \mid x \in X \rangle \leq \mathrm{Sym}_X\times \mathrm{Sym}_X\]
given by 
\[ w = x_1^{\varepsilon_1}\cdots x_n^{\varepsilon_n} \mapsto (\lambda_{x_1}^{\varepsilon_1}\cdots \lambda_{x_n}^{\varepsilon_n}, \rho_{x_n}^{-\varepsilon_n} \cdots \rho_{x_1}^{-\varepsilon_1}),\]
which is well-defined as equation~\eqref{eq:braided_eq} provides that $\lambda_x\lambda_y = \lambda_{\lambda_x(y)}\lambda_{\rho_y(x)}$ and $\rho_y\rho_x = \rho_{\rho_y(x)}\rho_{\lambda_x(y)}$. Following~\cite{CedoJespersKubatVanAntwerpenVerwimp23}, it turns out that $\ker \phi_{G_X}$ is an ideal of $G(X,r)$ contained in $\Soc(G(X,r))$ such that $G(X,r)/\ker\phi_{G_X}$ is a skew brace with multiplicative group isomorphic to
\[ \mathcal{G}(X,r)  = \langle (\lambda_x, \rho_x^{-1}) \mid x \in X \rangle \leq \mathrm{Sym}_X\times \mathrm{Sym}_X.\]
Thus, $\mathcal{G}(X,r) \cong G(X,r)/\ker \phi_{G_X}$ is called the \emph{permutation skew brace} of~$(X,r)$. 

\begin{rem}
\label{nota:lambda_actions}
Write $\bar G:= G(X,r)/\ker \phi_{G_X}$, and $\lambda^G$, $\lambda^{\bar{G}}$, and $\lambda^{\mathcal{G}}$ the $\lambda$-action in $G(X,r)$, $\bar G$, and $\mathcal{G}(X,r)$, respectively. Let $x,y\in X$. According to~\eqref{eq:commuting_diagram_X-G(X,r)}, it holds that $\lambda^G_x(y) = \iota(\lambda_x(y))= \lambda_x(y) \in G(X,r)$, so that
\[ \lambda^{\bar{G}}_{x\ker\phi_{G_X}}(y\ker\phi_{G_X}) = \lambda^G_x(y)\ker\phi_{G_X} = \lambda_x(y)\ker\phi_{G_X}, \quad \text{for every $x,y\in X$}.\]
Thus, $\lambda^{\mathcal{G}}_{(\lambda_x, \rho_x^{-1})}((\lambda_y, \rho_y^{-1})) = (\lambda_{\lambda_x(y)}, \rho_{\lambda_x(y)}^{-1}) \in \mathcal{G}(X,r)$. Similarly, the same holds for the $\rho$-action.
\end{rem}

If $f\colon (X,r) \rightarrow (Y,s)$ is a homomorphism of solutions, then by~\eqref{eq:diagram_hom_solutions}, it follows that $\bar{f}\colon G(X,r)\rightarrow G(Y,s)$ satisfies that $\bar{f}(\ker \phi_{G_X}) \subseteq \ker\phi_{G_Y}$. Thus, the homomorphism 
\[ \phi_{G_Y}\circ \bar{f}\colon G(X,r) \rightarrow \mathcal{G}(Y,s)\]
induces a homomorphism $\bar{\bar{f}}\colon \mathcal{G}(X,r) \rightarrow \mathcal{G}(Y,s)$.

The following easy-to-check proposition collects information about the interrelation of substructures and quotients between solutions and associated skew braces. 

\begin{prop}
\label{prop:subsol-subbrace}
Let $(X,r)$ be a solution of the YBE, $(Y,r)$ be a subsolution, and $\sim$ a congruence in~$X$. Then:
\begin{enumerate}
\item $G(Y,r_{Y})$ maps onto $\langle Y\rangle\subseteq G(X,r)$;
\item $\mathcal{G}(Y,r_Y)$ is isomorphic to a section of $\mathcal{G}(X,r)$;
\item $G(X/{\sim},r_{\sim})$ is isomorphic to a quotient of $G(X,r)$;
\item $\mathcal{G}(X/{\sim},r_{\sim})$ is isomorphic to a quotient of $\mathcal{G}(X,r)$.
\end{enumerate}
\end{prop}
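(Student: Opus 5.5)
The plan is to prove all four items with the two functoriality tools set up before the statement: the universal property \eqref{eq:universal_prop_brace} of $(G(X,r),\iota)$, and the induced maps $\bar f$ and $\bar{\bar f}$ attached to a homomorphism of solutions $f$. For (1) and (2), take $f\colon Y\hookrightarrow X$ to be the inclusion. Since $r(Y\times Y)=Y\times Y$, diagram \eqref{eq:diagram_hom_solutions} commutes, so $f$ is a homomorphism of solutions. Composing with $\iota_X$ gives $\bar f\colon G(Y,r_Y)\to G(X,r)$. This is a skew brace homomorphism sending each generator $y$ to $\iota_X(y)$. Its image is therefore a subbrace containing $\iota_X(Y)$, and it is additively generated by images of words in $Y$. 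In fact it is generated as a multiplicative group by $\iota_X(Y)$, because $G(Y,r_Y)$ is generated by $Y$ under the product. Since the image is a subbrace, it coincides with $\langle Y\rangle$. This proves (1).

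For (2), I will use the induced map $\bar{\bar f}\colon\mathcal{G}(Y,r_Y)\to\mathcal{G}(X,r)$, which is not injective in general. Instead I will describe $\mathcal{G}(Y,r_Y)$ directly as a quotient of a subbrace of $\mathcal{G}(X,r)$. Let $S=\phi_{G_X}(\langle Y\rangle)$, the subbrace of $\mathcal{G}(X,r)$ generated by the pairs $(\lambda_y,\rho_y^{-1})$ with $y\in Y$. Every element of $S$ is a pair of permutations of $X$ that leaves $Y$ invariant, since each $\lambda_y$ and $\rho_y$ with $y\in Y$ preserves $Y$. Restricting both coordinates to $Y$ gives a map $\pi\colon S\to \mathrm{Sym}_Y\times\mathrm{Sym}_Y$. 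Its image is exactly $\mathcal{G}(Y,r_Y)=\langle(\lambda_y|_Y,\rho_y|_Y^{-1})\rangle$.

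The step I expect to be the main obstacle is showing that $\pi$ is a skew brace homomorphism, and not merely a multiplicative one. I will handle it via Remark~\ref{nota:lambda_actions}. The $\lambda$- and $\rho$-actions on generators are given by $\lambda_y(y')$, and these are compatible with restriction. The additive structure of the permutation brace is determined by $a+b=a\lambda_a^{-1}(b)$. Hence $\pi$ preserves sums, because it preserves products and commutes with the $\lambda$-action on generators. I will then extend from generators to all of $S$ using that $\lambda$ is a homomorphism. This gives $\mathcal{G}(Y,r_Y)\cong S/\ker\pi$, a section of $\mathcal{G}(X,r)$.

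For (3) and (4), let $p\colon X\to X/{\sim}$ be the projection. By the definition of congruence, $p$ is a homomorphism of solutions, so we get $\bar p\colon G(X,r)\to G(X/{\sim},r_\sim)$. This map is surjective because its image contains the generators $[x]$ and is a subbrace. Thus $G(X/{\sim})\cong G(X,r)/\ker\bar p$, which is (3). For (4), the composite $\phi_{G_{X/\sim}}\circ\bar p$ is surjective onto $\mathcal{G}(X/{\sim})$, and it factors through $\ker\phi_{G_X}$ as noted before the statement. The induced map $\bar{\bar p}\colon\mathcal{G}(X,r)\to\mathcal{G}(X/{\sim})$ is therefore a surjective skew brace homomorphism, which gives (4).
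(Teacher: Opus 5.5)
Items (1), (3) and (4) are correct. They are exactly the functorial argument the paper intends; the paper gives no proof and only calls the proposition easy to check. In (4) the containment $\bar p(\ker\phi_{G_X})\subseteq\ker\phi_{G_{X/\sim}}$ you quote is valid because $p$ is surjective.

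In (2) you are right to avoid $\bar{\bar f}$, but the problem is worse than non-injectivity: for an inclusion it need not be defined at all. Take $r(x,y)=(\sigma(y),\sigma(x))$ with $\sigma\neq\mathrm{id}_X$ fixing a subset $Y$ pointwise. Then $Y$ is a subsolution and $y\in\ker\phi_{G_Y}$, but $\phi_{G_X}(y)=(\sigma,\sigma^{-1})\neq 1$. So a restriction map from a subbrace of $\mathcal{G}(X,r)$ onto $\mathcal{G}(Y,r_Y)$, as you propose, is the correct shape of argument.

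\textbf{The gap.} The step you flagged does not close as written. To get $\pi(a+b)=\pi(a)+\pi(b)$ from $a+b=a\lambda_a^{-1}(b)$, you need $\pi\circ\lambda_a=\lambda_{\pi(a)}\circ\pi$ for all $a,b\in S$.
\begin{itemize}
\item Multiplicativity of $\lambda$ lets you pass from generators $a$ to arbitrary $a$, but only for a fixed $b$.
\item Passing from generators $b$ to arbitrary $b$ means pushing $\pi$ through the additive automorphism $\lambda_a$. That requires $\pi$ to be additive, which is what you are trying to prove.
\end{itemize}
So the sketch, as stated, is circular.

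\textbf{Cleanest repair, with no computation.}
\begin{itemize}
\item The map $\phi_{G_X}\circ\bar f\colon G(Y,r_Y)\to S$ is a surjective skew brace homomorphism.
\item As maps of multiplicative groups, $\phi_{G_Y}=\pi\circ\phi_{G_X}\circ\bar f$, since both send each $y\in Y$ to $(\lambda_y|_Y,(\rho_y|_Y)^{-1})$.
\item Hence $\ker(\phi_{G_X}\circ\bar f)\subseteq\ker\phi_{G_Y}$.
\item Under $S\cong G(Y,r_Y)/\ker(\phi_{G_X}\circ\bar f)$ and $\mathcal{G}(Y,r_Y)\cong G(Y,r_Y)/\ker\phi_{G_Y}$, the map $\pi$ is the canonical projection between quotients by nested ideals. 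It is therefore automatically a skew brace epimorphism.
\end{itemize}

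\textbf{Alternative repair of your computational route.} Add two facts.
\begin{itemize}
\item First, $(S,+)$ is generated by the elements $g_y=(\lambda_y,\rho_y^{-1})$, $y\in Y$. The maps $\lambda_{g_y}^{\pm1}$ permute this set by Remark~\ref{nota:lambda_actions}, and $g^{-1}=-\lambda_g^{-1}(g)$. So their additive span is closed under left multiplication by $g_y^{\pm1}$.
\item Second, $\lambda_a(g_y)=g_{a_1(y)}$ for every $a=(a_1,a_2)\in S$. This uses Remark~\ref{nota:lambda_actions} together with multiplicativity of $\lambda$ in the variable $a$ only, and the analogous formula holds in $\mathcal{G}(Y,r_Y)$.
\end{itemize}
Then $\pi(a+g_y)=\pi(a)\,\pi(g_{a_1^{-1}(y)})=\pi(a)+\pi(g_y)$ for all $a\in S$. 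Applying this to $c=a-g_y$ gives $\pi(a-g_y)=\pi(a)-\pi(g_y)$. Induction on additive length then yields additivity of $\pi$.
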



The following property of solutions is naturally associated with nilpotency notions of skew braces. Let $(X,r)$ be a solution and for any $x,y\in X$, consider
\[ x \operatorname{Ret} y \quad \Longleftrightarrow \quad 
\begin{array}{c}
\lambda_{x}= \lambda_y;\\
\rho_x=\rho_y.
\end{array}\]
It follows that $\operatorname{Ret}$ is a congruence and $\operatorname{Ret}(X,r)=(X/\operatorname{Ret},r_{\operatorname{Ret}})$ is called the \emph{retraction} of $(X,r)$. We write $[x]_{\Ret}$, or simply $[x]$ if the relation $\Ret$ is understood, the equivalence class of $x$ under $\Ret$.

For every successive ordinal $m+1$,  we can recursively define $\operatorname{Ret}^{m+1}(X,r)=\operatorname{Ret}(\operatorname{Ret}^{m}(X,r))$. Instead, if $\alpha $ is a limit ordinal, take the equivalence relation $\operatorname{Ret}^{\alpha}$ defined as: $\forall x,y\in X$, $x\operatorname{Ret}^{\alpha} y$ if, and only if, there is a $\beta < \alpha$ such that $x\operatorname{Ret}^\beta y$. Then, $\operatorname{Ret}^\alpha$ is a congruence, and it defines a solution $\operatorname{Ret}^{\alpha}(X,r)=(X/\operatorname{Ret^{\alpha}},r_{\operatorname{Ret}^{\alpha}})$. For every ordinal $\alpha$, we write $[x]_{\alpha}$ the equivalence class under~$\Ret^\alpha$.

A solution is said \emph{hypermultipermutational} if there is an ordinal $\alpha$ such that $\operatorname{Ret}^\alpha(X,r)$ is the solution over a singleton set. In particular, it is said \emph{multipermutational} of level $n$, if $n$ is the smallest natural number for which $\Ret^n(X,r)$ is a solution over a singleton. For instance, every Lyubashenko solution is multipermutational of level~$1$.

Let $(X,r)$ and $(Y,s)$ be two solutions of the YBE, and let $f\colon X\rightarrow Y$ a homomorphism of solutions. Take $\pi_X \colon (X,r) \rightarrow \Ret(X,r)$ and  $\pi_Y\colon (Y,s) \rightarrow \Ret(Y,s)$ canonical epimorphisms of each solution to its correspondent quotient. Following~\cite{CedoJespersKubatVanAntwerpenVerwimp23}, $f$ induces a homomorphism of solutions $\Ret(f)\colon \Ret(X,r) \rightarrow \Ret(Y,s)$, $\Ret(f)([x]) = [f(x)]$ for every $x\in X$, such that the following diagram commutes:
\begin{equation}
\label{eq:homorphism_Ret}
\xymatrix{(X,r) \ar[d]^{\pi_X} \ar[r]^{f}  & (Y,s) \ar[d]^{\pi_Y} \\ \Ret(X,r) \ar[r]_{\Ret(f)}  & \Ret(Y,s) }
\end{equation}
Using transfinite induction, this also holds for every ordinal~$\alpha$.

If $B$ is a skew brace, then $a \in \Soc(B)$ if, and only if, $\lambda_a = \rho_a = \mathrm{id}_B$. Then, for every $a,b\in B$, $a\Soc(B) = b\Soc(B)$ if, and only if, $\lambda_a = \lambda_b$ and $\rho_a = \rho_b$. In terms of the associated solution $(B,r_B)$, for every $a,b\in B$, it holds that $a\Soc(B) = b\Soc(B)$ if, and only if, $a \Ret b$. Thus, for every ordinal $\alpha$, 
\[ (B/\Soc_\alpha(B), r_{B/\Soc_\alpha(B)}) = \Ret^\alpha(B,r_B).\]
Therefore, $B$ is hypermultipermutational (resp. multipermutational of level~$n$) if, and only if, $(B, r_B)$ is hypermultipermutational (resp. hypermultipermutational of level~$n$). This equivalence can be also translated to the structure skew brace of a solution by the following known result (see~\cite{CedoJespersKubatVanAntwerpenVerwimp23}).

\begin{theo}
\label{theo:multeq}
    Let $(X,r)$ be a solution of the Yang-Baxter Equation. Then the following statements are equivalent:
    \begin{enumerate}
        \item $(X,r)$ is a multipermutational solution;
        \item $G(X,r)$ has finite multipermutational level;
        \item $\mathcal{G}(X,r)$ has finite multipermutational level.
    \end{enumerate}
\end{theo}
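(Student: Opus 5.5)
We prove the cycle of implications (1)$\Rightarrow$(2)$\Rightarrow$(3)$\Rightarrow$(1), using the dictionary between retractions and socle quotients: for a skew brace $B$ and every ordinal $\alpha$ we have $(B/\Soc_\alpha(B), r_{B/\Soc_\alpha(B)}) = \Ret^\alpha(B,r_B)$.

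\emph{(2)$\Rightarrow$(3).} The permutation skew brace $\mathcal{G}(X,r)$ is the quotient $G(X,r)/\ker\phi_{G_X}$. Quotients preserve finite multipermutational level, because the image of $\Soc_k(B)$ under an epimorphism $B\to B/I$ lies in $\Soc_k(B/I)$. This last fact follows by induction on $k$: an element $a$ with $\lambda_a=\rho_a=\mathrm{id}$ modulo $\Soc_{k-1}$ still has this property modulo the image.

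\emph{(3)$\Rightarrow$(1).} Suppose $\mathcal{G}(X,r)$ has level $n$. Set $\bar G = G(X,r)/\ker\phi_{G_X}$ and compose $\iota$ with the projection to get the homomorphism of solutions $f\colon (X,r)\to(\bar G, r_{\bar G})$ of Remark~\ref{nota:lambda_actions}. Since $\ker\phi_{G_X}\le\Soc(G(X,r))$, the brace $G(X,r)$ has level at most $n+1$. It is therefore cleaner to pass through $G(X,r)$ and show that (2)$\Rightarrow$(1) together with (3)$\Rightarrow$(2).

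\emph{(2)$\Rightarrow$(1).} Let $m$ be the level of $G(X,r)$. By~\eqref{eq:homorphism_Ret} and transfinite induction, $\iota$ induces homomorphisms $\Ret^k(X,r)\to\Ret^k(G_X,r_{G_X}) = G_X/\Soc_k(G_X)$. For $k=m$ the target is a singleton, but the induced map need not be injective, so this alone does not force $\Ret^m(X,r)$ to be trivial. This is the main obstacle. The key claim we would establish is
\[\lambda_x=\lambda_y,\ \rho_x=\rho_y \iff \iota(x)\Soc(G_X)=\iota(y)\Soc(G_X).\]
For the forward direction, $\lambda^G_x$ and $\lambda^G_y$ agree on the generators $X$ and hence on all of $G_X$, since $\lambda^G$ restricted to $X$ determines the automorphism of $(G_X,+)$. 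The same holds for $\rho$, via the presentation. The reverse direction comes from Remark~\ref{nota:lambda_actions}. Consequently $\Ret(X,r)$ embeds into the image of $X$ in $G_X/\Soc(G_X)$, up to the injectivity issue of $\iota$. We handle this issue by noting that $x\,\ker\iota\,y$ implies $x\Ret y$, since $\lambda_x$ is read off from $\lambda^G_{\iota(x)}$ on $\iota(X)$, together with the correspondence for the $\lambda$-maps. Then $\Ret(X,r)$ is isomorphic to the image of $X$ in $G_X/\Soc(G_X)$, and this image generates that brace and is identified with the structure-type solution of $\Ret(X,r)$. Iterating $m$ times, with $G(\Ret(X,r))$ mapping onto $G_X/\Soc(G_X)$, gives a singleton.

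\emph{(1)$\Rightarrow$(2).} We argue by induction on the level of $(X,r)$. If $\Ret(X,r)$ has level $n-1$, then $G(\Ret(X,r))$ has level $n-1$ by the induction hypothesis. The epimorphism $G(X,r)\to G(\Ret(X,r))$ has kernel generated by the elements $\iota(x)^{-1}\iota(y)$ with $x\Ret y$, and the claim shows these lie in $\Soc(G_X)$. Hence $G_X/\Soc(G_X)$ is a quotient of $G(\Ret X)$, so it has level at most $n-1$, and $G_X$ has level at most $n$. The base case is a trivial solution, i.e. a singleton whose structure brace is $\mathbb{Z}$ and is trivial. Finally, (3)$\Rightarrow$(2) follows from $\ker\phi_{G_X}\le\Soc(G_X)$: $\Soc_{k+1}(G_X)$ contains the preimage of $\Soc_k(\bar G)$, which again is proved by induction on $k$.
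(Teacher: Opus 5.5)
Your implications (2)$\Rightarrow$(3), (3)$\Rightarrow$(2) and (1)$\Rightarrow$(2) are fine. The last one only uses the forward half of your key claim, $x\Ret y\Rightarrow \iota(x)^{-1}\iota(y)\in\Soc(G_X)$, which is true. For its $\rho$-half you cannot argue by agreement on generators, since $\rho^G_b$ is not additive. Instead, the identity $-c+a+c=\lambda_c\rho_{\lambda_a^{-1}(c)}(a)$, applied to generators, shows that $-\iota(x)+\iota(y)$ is additively central.

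\textbf{The gap is in (2)$\Rightarrow$(1).} The reverse half of the key claim, $\iota(x)\Soc(G_X)=\iota(y)\Soc(G_X)\Rightarrow \lambda_x=\lambda_y,\ \rho_x=\rho_y$, is false when $\iota$ is not injective.
\begin{itemize}
\item Remark~\ref{nota:lambda_actions} gives this only for cosets of $\ker\phi_{G_X}$, and $\ker\phi_{G_X}$ can be strictly smaller than $\Soc(G_X)$. An element $w\in\Soc(G_X)$ fixes $\iota(X)$ pointwise, so the first component of $\phi_{G_X}(w)$ need only preserve each $\ker\iota$-class, not fix $X$.
\item Your observation $x\ker\iota\,y\Rightarrow x\Ret y$ is true, but because $\phi_{G_X}$ is defined on $G_X$. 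Reading $\lambda_x$ off $\iota(X)$ only determines it modulo $\ker\iota$. In any case it does not help: the problem is pairs whose $\lambda$-maps differ only by $\ker\iota$-related values.
\end{itemize}

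Here is a counterexample. Let $X=\{1,2,3\}$, $\rho_x=\mathrm{id}$ for all $x$, $\lambda_1=\lambda_2=(1\,2)$ and $\lambda_3=\mathrm{id}$ (the solution of a rack).
\begin{itemize}
\item The relation $1\cdot 1=\lambda_1(1)\rho_1(1)=2\cdot 1$ gives $\iota(1)=\iota(2)$. After that, every relation says that the generators commute, so $G_X\cong\mathbb{Z}^2$.
\item $\lambda^G$ fixes $\iota(X)$ pointwise. Hence $G_X$ is a trivial skew brace with $\Soc(G_X)=G_X$, of level $1$.
\item Yet $\Ret(X,r)$ has the two classes $\{1,2\}$ and $\{3\}$. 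So $\Ret^1(X,r)$ is not a singleton, contrary to what your argument yields for $m=1$; here $(X,r)$ has level $2$.
\end{itemize}

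Independently, the iteration step is not justified. At the next stage you would need the key claim for $\Ret(X,r)$ sitting inside $G_X/\Soc(G_X)$, which is not its structure brace. The epimorphism $G(\Ret(X,r))\to G_X/\Soc(G_X)$ points the wrong way: it transfers finite level to the quotient, not back to $G(\Ret(X,r))$.

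\textbf{How to close the cycle.} Use (3)$\Rightarrow$(1), the route you began and abandoned; it is also how the paper argues in the hypermultipermutational analogue.
\begin{itemize}
\item By Remark~\ref{nota:lambda_actions}, $x\mapsto(\lambda_x,\rho_x^{-1})$ is a homomorphism of solutions $X\to\mathcal{G}(X,r)$. By the very definition of $\Ret$, it induces an \emph{injective} homomorphism $\Ret(X,r)\to\mathcal{G}(X,r)$.
\item Lemma: if $f\colon (Y,s)\to(B,r_B)$ is an injective homomorphism of solutions, then $\Ret(f)\colon\Ret(Y,s)\to B/\Soc(B)$ is injective. Indeed, $f(u)\Soc(B)=f(v)\Soc(B)$ forces $\lambda^B_{f(u)}=\lambda^B_{f(v)}$ and $\rho^B_{f(u)}=\rho^B_{f(v)}$. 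Hence $f(\lambda_u(z))=f(\lambda_v(z))$ and $f(\rho_u(z))=f(\rho_v(z))$ for all $z$, and injectivity of $f$ gives $u\Ret v$.
\item Iterating the lemma, $\Ret^{k+1}(X,r)$ embeds into $\mathcal{G}(X,r)/\Soc_k(\mathcal{G}(X,r))$. So if $\mathcal{G}(X,r)$ has level $n$, then $\Ret^{n+1}(X,r)$ is a singleton.
\end{itemize}

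Alternatively, apply the same lemma to $\iota$ when $(X,r)$ is injective. Then check that the level of $(X,r)$ exceeds that of $\mathrm{Inj}(X,r)$ by at most one, since the kernel of $X\to\mathrm{Inj}(X,r)$ lies in $\Ret$.

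Your level bookkeeping in (1)$\Rightarrow$(2) is also off by one: a singleton has level $0$, while its structure brace $\mathbb{Z}$ has level $1$. This does not affect finiteness.
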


We bring this section to a close by extending the previous theorem to the hypermultipermutational case.

\begin{theo}
Let $(X,r)$ be a solution of the YBE. Then, the following are equivalent:
\begin{enumerate}
\item $(X,r)$ is a hypermultipermutational solution;
\item $G(X,r)$ is a hypermultipermutational skew brace;
\item $\mathcal{G}(X,r)$ is a hypermultipermutational skew brace.
\end{enumerate}
\end{theo}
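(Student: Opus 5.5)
We will prove (2)$\Rightarrow$(3)$\Rightarrow$(1)$\Rightarrow$(2), following the scheme behind Theorem~\ref{theo:multeq}, with transfinite induction replacing finite induction.

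For (2)$\Rightarrow$(3) we use that $\mathcal{G}(X,r)\cong G(X,r)/\ker\phi_{G_X}$ is a quotient of $G(X,r)$. If $B$ is a skew brace and $I\unlhd B$, then by induction on $\alpha$ one has $\Soc_\alpha(B)I/I\le \Soc_\alpha(B/I)$. Indeed, $\Soc(B)I/I\le\Soc(B/I)$ follows from the characterisation $\lambda_a=\rho_a=\mathrm{id}$, since these actions pass to quotients. Successor steps then follow by applying this in $B/\Soc_\alpha(B)$, and limit steps are unions. So hypermultipermutational skew braces are closed under quotients.

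For (3)$\Rightarrow$(1) we use the correspondence $(B/\Soc_\alpha(B),r)=\Ret^\alpha(B,r_B)$ applied to $B=\mathcal{G}(X,r)$. Consider the map $\psi\colon X\to\mathcal{G}(X,r)$, $x\mapsto(\lambda_x,\rho_x^{-1})$. By Remark~\ref{nota:lambda_actions} it is a homomorphism of solutions, and $\psi(x)=\psi(y)$ iff $x\Ret y$. So $\psi$ induces an injective homomorphism $\Ret(X,r)\to \mathcal{G}(X,r)$. We now compose with the canonical epimorphism $\mathcal{G}(X,r)\to\mathcal{G}(X,r)/\Soc_\alpha$ and use \eqref{eq:homorphism_Ret} transfinitely. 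The claim to show is: if $\psi(x)$ and $\psi(y)$ are $\Ret^\alpha$-related in $(\mathcal{G},r_{\mathcal{G}})$, then $x\Ret^{1+\alpha}y$. The intended argument is by transfinite induction, using that $\Ret$ of the image subsolution $\psi(X)$ is detected by the $\lambda,\rho$ restricted to $\psi(X)$. This is the delicate point, since $\Ret$ of a subsolution may identify more than $\Ret$ of the ambient solution. That direction is favourable, however: if $\lambda^{\mathcal G}_a=\lambda^{\mathcal G}_b$ and $\rho^{\mathcal G}_a=\rho^{\mathcal G}_b$ on all of $\mathcal{G}$, then in particular they agree on $\psi(X)$. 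So $\Ret^\alpha$ on $\mathcal{G}$ restricted to $\psi(X)$ refines $\Ret^\alpha$ of the subsolution $\psi(X)\cong\Ret(X,r)$. Hence if $\Ret^\alpha(\mathcal G)$ is a singleton, so is $\Ret^{\alpha}(\Ret(X,r))=\Ret^{1+\alpha}(X,r)$.

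For (1)$\Rightarrow$(2), which I expect to be the main obstacle, we need $G=G(X,r)$ to be hypermultipermutational knowing only that $X$ retracts transfinitely to a point. The plan is to show by transfinite induction that $\iota(x)\iota(y)^{-1}$ (suitably) lies in $\Soc_\alpha(G)$ whenever $x\Ret^\alpha y$. Equivalently, the composite $X\to G\to G/\Soc_\alpha(G)$ factors through $\Ret^\alpha(X,r)$. Then apply the universal property \eqref{eq:universal_prop_brace}. The induction yields a brace epimorphism $G(\Ret^\alpha(X,r))\to G/\Soc_\alpha(G)$, while conversely $G/\Soc_\alpha(G)$ is generated by the images of $X$.

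The key successor step is the following. If $\lambda_x=\lambda_y$ and $\rho_x=\rho_y$ on $X$, then $\lambda^G_x=\lambda^G_y$ and $\rho^G_x=\rho^G_y$ on $G$, because $X$ generates $G$ and these actions are determined on generators. Hence $x$ and $y$ become $\Ret$-equivalent in $(G,r_G)$, i.e.\ $x\Soc(G)=y\Soc(G)$. One should then apply this inside $G/\Soc_\alpha(G)$, which is a quotient of $G(\Ret^\alpha X)$. Limits are unions. When $\Ret^\alpha(X,r)$ is a singleton $\{*\}$, all generators coincide modulo $\Soc_\alpha(G)$. Thus $G/\Soc_\alpha(G)$ is a quotient of $G(\{*\},r)\cong\mathbb{Z}$ with trivial actions, which is abelian, so $\Soc_{\alpha+1}(G)=G$.
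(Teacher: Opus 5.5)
Your proof is correct and follows the paper's plan. (2)$\Rightarrow$(3) passes to the quotient $\mathcal{G}(X,r)\cong G(X,r)/\ker\phi_{G_X}$; (3)$\Rightarrow$(1) uses the embedding $\Ret(X,r)\hookrightarrow\mathcal{G}(X,r)$ induced by $x\mapsto(\lambda_x,\rho_x^{-1})$; and (1)$\Rightarrow$(2) uses the universal property on the map induced on $\Ret^\alpha(X,r)$, together with $G(\{*\})\cong\mathbb{Z}$ being a trivial brace.

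Your bookkeeping is cleaner than the paper's in two places. Working modulo $\Soc_\alpha(G)$ by transfinite induction replaces the paper's reduction ``suppose $\Soc(G(X,r))=0$'' and its claimed isomorphism $G(X,r)\cong G(\Ret^\alpha(X,r))$, of which only the surjection is needed. Your refinement lemma for $\Ret^\alpha$ on the subsolution $\psi(X)$ supplies the injectivity at each stage that the paper's recursive appeal to \eqref{eq:homorphism_Ret} requires. The one loosely justified step is that $x\Ret y$ forces $\rho^G_x=\rho^G_y$; this follows at once from $x^{-1}y\in\ker\phi_{G_X}\subseteq\Soc(G(X,r))$.
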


\begin{proof}
Assume that $(X,r)$ is a hypermultipermutational solution, and suppose that $\operatorname{Soc}(G(X,r))=0$. Let $\alpha$ be an ordinal. Since $\iota \colon X\rightarrow G(X,r)$ is a homomorphism of solutions, by~\eqref{eq:homorphism_Ret}, it induces a homomorphism of solutions
$$\Ret^\alpha(\iota)\colon \operatorname{Ret}^\alpha(X,r)\longrightarrow \operatorname{Ret}^\alpha(G(X,r))=\frac{G(X,r)}{\operatorname{Soc}_\alpha(G(X,r))}=G(X,r),$$
for every ordinal $\alpha$. Observe that we can write 
\[ \Ret^\alpha(\iota)(x) = [x]_{\alpha} = x \in \Ret^{\alpha}(G(X,r)), \quad \text{for every $x\in X$,}\]
as $\Ret^\alpha(G(X,r)) = G(X,r)$. 

Applying the universal property~\eqref{eq:universal_prop_brace} to $\Ret^\alpha(\iota)$, we conclude that there exists a skew brace homomorphism 
$$\overline{\Ret^{\alpha}(\iota)} \colon G(\operatorname{Ret}^\alpha(X,r))\longrightarrow G(X,r),$$ 
which is clearly surjective, as  
\[ \overline{\Ret^\alpha(\iota)}([x_1]_\alpha^{\varepsilon_1}\dots[x_n]_\alpha^{\varepsilon_n})= \Ret^\alpha(\iota)(x_1)^{\varepsilon_1}\cdots \Ret^{\alpha}(\iota)(x_n)^{\varepsilon_n} = x_1^{\varepsilon_1}\cdots x_n^{\varepsilon_n}\]
for every $x_1^{\varepsilon_1}\dots x_n^{\varepsilon_n} \in G(X,r)$

Similarly, we extend the canonical epimorphism  $\pi_{X,\alpha}\colon  (X,r)\rightarrow \operatorname{Ret}^{\alpha}(X,r)$ to a skew brace epimorphism 
$$\overline{\pi}_{X,\alpha}\colon G(X,r)\longrightarrow G(\operatorname{Ret}^\alpha(X,r)).$$ 
Clearly, it follows that $\overline{\pi}_{X,\alpha}$ and $\overline{\Ret^{\alpha}(\iota)}$ are inverse to each other. Therefore, $G(X,r)\simeq G(\operatorname{Ret}^{\alpha}(X,r))$.
 
Since $(X,r)$ is hypermultipermutational, there is an ordinal $\alpha$ such that $\Ret^\alpha(X,r) = (X/\Ret^{\alpha}, r_{\Ret^{\alpha}})$ is a singleton solution, which is clearly involutive as $r_{\Ret^{\alpha}}$ is the identity on $X/\!\Ret^\alpha \times X/\!\Ret^{\alpha}$. Thus, according to~\eqref{eq:structure_group} and Example~\ref{ex:involutive-freeabelian}, $(G(X,r),\cdot)$ and $(G(X,r),+)$ are isomorphic to an infinite cyclic group. Since $r_{\Ret^{\alpha}}$ is an identity map, according to the commuting diagram~\eqref{eq:commuting_diagram_X-G(X,r)}, it follows that the $\lambda$-action $\lambda\colon (G(X,r),\cdot)\rightarrow \Aut((G(X,r),+))$ must be trivial. Hence, $\Soc(G(X,r)) = G(X,r)$ and we arrive to a contradiction.

2 implies 3, clearly follows as $\mathcal{G}(X,r)$ is isomorphic to a quotient of $G(X,r)$ by an ideal contained in $\operatorname{Soc}(G(X,r))$.

Assume that $\mathcal{G}(X,r)$ is hypermultipermutational. According to Remark~\ref{nota:lambda_actions}, the map $f\colon X \rightarrow \mathcal{G}(X,r)$, $x \mapsto (\lambda_x, \rho_x^{-1})$ is a homomorphism of solutions, and therefore, it induces an injective homomorphism of solutions $\bar f \colon \Ret(X,r) \rightarrow \mathcal{G}(X,r)$. Applying~\eqref{eq:homorphism_Ret} recursively, it follows that $\Ret(X,r)$ is hypermultipermutational, and so is~$(X,r)$.
\end{proof}

\section{Chain conditions on ideals of skew braces}

In this section, we prove brace-theoretical analogues of Hall and McLain results about maximal and minimal conditions on normal subgroups. Recall that we call a skew brace $B$ i-noetherian (resp. s-noetherian) if it satisfies the maximal condition on ideals (resp. subbraces), and we call $B$ i-artinian (resp. s-artinian) if it satisfies the minimal condition on ideals (resp. subbraces). Throughout this section, we use the following well-known properties of the star product of elements:
\begin{align}
a \ast (b+c) & = a \ast b + b + a\ast c - b \label{eq:ast_dist_sum}\\
(ab) \ast c & = a \ast (b \ast c) + b \ast c + a \ast c \label{eq:ast_dist_prod}
\end{align}

Firstly, we focus on Hall's result: every soluble group satisfying the maximal condition on normal subgroups is finitely generated (see~\cite[page~420]{Hall54}). This result is an easy consequence of the description of the normal closure of a cyclic subgroup of an abelian normal subgroup.

The description of a one-generated subbrace is one of the most difficult open problems of the skew brace theory. However, in the class of braces some partial results are obtained in terms of nilpotency notions (see~\cite{finitely-leftnil,KurdachenkoSubbotin24}). 

\begin{rem}
Let $B$ be a skew brace and let $a\in B$. If $n\in \mathbb{N}$, $\ast_n(b_1, \ldots, b_n)(a)$ denotes the set of all possible $\ast$-products of $n+1$ elements, starting at $a$, and multiplying by each $b_i$ (either on the left or on the right) from $i=1$ to $i = n$. For instance, given $a,b_1,b_2\in B$:
\[ \ast_2(b_1,b_2)(a) = \{ (a \ast b_1) \ast b_2, b_2 \ast (a \ast b_1), (b_1\ast a)\ast b_2, b_2 \ast (b_1 \ast a)\} \]
\end{rem}

We prove here the following lemma.

\begin{lem}
\label{lem:id-generated}
Let $B$ be a  brace and let $I$ be an abelian ideal of~$B$. For every $a \in I$, we have that 
\[ a^B = \langle a, \ast_n(b_1,\ldots, b_n)(a) \mid n\in \mathbb{N}\text{ and } b_1,\ldots, b_n \in B\rangle_+. \]
\end{lem}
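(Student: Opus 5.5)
Let $J$ denote the right-hand side, the additive subgroup generated by $a$ and all iterated star products $\ast_n(b_1,\ldots,b_n)(a)$. We prove the two inclusions $J\subseteq a^B$ and $a^B\subseteq J$ separately.

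\emph{Step 1: $J\subseteq a^B$.} Since $a^B$ is an ideal containing $a$, it is closed under $x\mapsto x\ast b$ and $x\mapsto b\ast x$ for every $b\in B$. By induction on $n$, every element of $\ast_n(b_1,\ldots,b_n)(a)$ therefore lies in $a^B$. As $a^B$ is an additive subgroup, $J\subseteq a^B$.

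\emph{Step 2: $J$ lies in $I$ and is additively normal.} Since $I$ is an ideal and $a\in I$, the same induction shows that all generators of $J$ lie in $I$, so $J\subseteq I$. Because $B$ is a brace, $(B,+)$ is abelian, so $J$ is automatically a normal subgroup of $(B,+)$.

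\emph{Step 3: $J$ is an ideal.} It suffices to show $B\ast J\subseteq J$ and $J\ast B\subseteq J$; together with additive normality this gives $J\unlhd B$, and since $a\in J$ we get $a^B\subseteq J$.
\begin{itemize}
\item \emph{Left closure.} Since $(B,+)$ is abelian, \eqref{eq:ast_dist_sum} reads $b\ast(x+y)=b\ast x+b\ast y$. Hence $x\mapsto b\ast x=\lambda_b(x)-x$ is an additive endomorphism of $(B,+)$. It maps each generator $g$ of $J$ to $b\ast g$, which is again a generator (with $n$ increased by one). So $b\ast J\subseteq J$.
\item \emph{Right closure.} We need $x\ast b\in J$ for $x\in J$, but $x\mapsto x\ast b$ is not additive in general. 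Here we use that $I$ is abelian. For $x,y\in I$, since $I$ is an abelian ideal, $\lambda_x$ acts trivially on $I$, so $x\ast y=0$ and $x+y=xy$. Then \eqref{eq:ast_dist_prod} gives
\[ (x+y)\ast b=(xy)\ast b=x\ast(y\ast b)+y\ast b+x\ast b. \]
Since $y\ast b\in I$, the first term $x\ast(y\ast b)$ vanishes, so $(x+y)\ast b=x\ast b+y\ast b$. Likewise $(-x)\ast b=-(x\ast b)$, using $-x=x^{-1}$ in $I$. Thus $x\mapsto x\ast b$ is additive on $I$. Since $J\subseteq I$, and $g\ast b$ is a generator of $J$ whenever $g$ is, we get $J\ast B\subseteq J$.
\end{itemize}

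\emph{Main obstacle.} The delicate point is Step 3's right closure. It relies on the identities $x\ast y=0$ and $x+y=xy$ inside the abelian ideal $I$, which follow from $I$ being abelian as a skew brace (so that $\lambda$ restricted to $I$ is trivial and the two operations agree on $I$). One must also check that $J\subseteq I$ (Step 2) before applying this additivity.
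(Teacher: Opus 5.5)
Your proof is correct and follows essentially the same route as the paper. You set $J$ equal to the right-hand side, note $J\subseteq a^B$, and show $J$ is an ideal using two facts: additivity of $x\mapsto b\ast x$, and additivity of $x\mapsto x\ast b$ on $I$, which comes from $x+y=xy$ and $x\ast(y\ast b)=0$ in the abelian ideal. Your remark that left closure needs only $(B,+)$ abelian, and your direct inductive check that $J\subseteq I$, are minor refinements of the paper's argument.
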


\begin{proof}
Let $n\in \mathbb{N}$. Given $b_1, \ldots, b_n, b\in B$, if $x\in \ast_n(b_1,\ldots, b_n)(a)$, then
\[ b \ast x, x \ast b \in \ast_{n+1}(b_1,\ldots, b_n,b)(a).\]
Call $J=\langle a, \ast_n(b_1,\ldots, b_n)(a) \mid n\in \mathbb{N}\text{ and } b_1,\ldots, b_n \in B\rangle_+$. It holds that $J \subseteq \langle a \rangle^B$ by the definition of an ideal. Thus, it suffices to see that $J$ is an ideal. 

Each element $x\in J$ can be written as $x=\sum_{t=1}^r\varepsilon_tx_t$, with $\varepsilon_t\in\{-1, 1\}$, and either $x_t = a$ or $x_t\in \ast_n(b_1,\dots, b_n)(a)$ for $1\le t\le r$. Since $\langle a \rangle^B \subseteq I$ and $I$ is abelian, by~\eqref{eq:ast_dist_sum}, it follows that $c*\sum_{t=1}^r\varepsilon_tx_t=\sum_{t=1}^r\varepsilon_t(c*x_t)\in J$ for every $c\in B$. Hence, $B\ast J \subseteq J$.

Consider now $x, y\in J \subseteq I$ and $z\in B$. Since $I$ is an abelian ideal,  $y*z\in I$ and $x*(y*z)=0$. Moreover, $x+y = xy$ and~\eqref{eq:ast_dist_prod} yields
\[(x+y)*z=(xy)*z=x*(y*z)+y*z+x*z=x*z+y*z.\]
We conclude that $\left(\sum_{t=1}^r\varepsilon_tx_t\right)*z=\sum_{t=1}^r\varepsilon_t(x_t*z)\in J$ for every $c\in B$. Hence, $J\ast B \subseteq J$, and the lemma follows.
\end{proof}

This lemma provides a generalisation of Hall's theorem for braces. 

\begin{theo}
\label{teo:Hall-soluble-BRACE}
Every i-noetherian soluble brace is finitely generated.
\end{theo}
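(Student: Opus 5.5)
We argue by induction on the derived length $n$ of the soluble brace $B$, mimicking Hall's argument. If $n=0$ then $B=0$ and there is nothing to prove. Both properties pass to quotients: ideals of $B/I$ correspond to ideals of $B$ containing $I$, and an abelian series of $B$ projects to one of $B/I$. So for $n\geq 1$ fix an abelian series $0=I_0\leq I_1\leq\cdots\leq I_n=B$. Then $B/I_1$ is an i-noetherian soluble brace of derived length at most $n-1$. By induction it is finitely generated, say by $g_1I_1,\ldots,g_kI_1$.

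It then suffices to show that $I_1$ is contained in the ideal generated by finitely many of its own elements. First, the additive subgroups $J\leq (I_1,+)$ that are ideals of $B$ satisfy the maximal condition. Hence, among the ideals $a_1^B+\cdots+a_m^B$ with $a_i\in I_1$, there is a maximal one, $J=\{a_1,\ldots,a_m\}^B$. Maximality forces $J=I_1$: otherwise adding some $a\in I_1\setminus J$ gives a strictly larger ideal of this form inside $I_1$. So $I_1=\{a_1,\ldots,a_m\}^B$.

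Now set $S=\langle g_1,\ldots,g_k,a_1,\ldots,a_m\rangle$. Then $S+I_1=B$, since $SI_1/I_1$ contains all the generators $g_iI_1$ of $B/I_1$, and $S+I_1=SI_1$ because $I_1$ is an ideal. We show $I_1\subseteq S$, which gives $S=B$. By Lemma~\ref{lem:id-generated}, each $a_i^B$ is additively generated by $a_i$ and the iterated products $\ast_t(b_1,\ldots,b_t)(a_i)$ with $b_j\in B$. So it suffices to see that each such product lies in $S$.

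This is the main step. Write $b_j=s_jc_j$ with $s_j\in S$ and $c_j\in I_1$. Since $I_1$ is abelian, the element $x=\ast_{t-1}(\ldots)(a_i)$ lies in $I_1$ and satisfies $c\ast x=x\ast c=0$ for $c\in I_1$. We then expand using \eqref{eq:ast_dist_prod} and the computation in the proof of Lemma~\ref{lem:id-generated}:
\[ (s c)\ast x = s\ast(c\ast x)+c\ast x+s\ast x = s\ast x. \]
For the right product, $x\ast(sc)$, write $sc=s+\lambda_s(c)$ and use \eqref{eq:ast_dist_sum} together with the fact that $\lambda_s(c)\in I_1$. Since $I_1$ is abelian and $x\in I_1$, we get $x\ast(s+\lambda_s(c))=x\ast s+s+x\ast\lambda_s(c)-s=x\ast s$.

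By induction on $t$, every element of $\ast_t(b_1,\ldots,b_t)(a_i)$ therefore equals an element of $\ast_t(s_1,\ldots,s_t)(a_i)$. That element lies in $S$ because $S$ is a subbrace containing $a_i$ and the $s_j$. Hence $I_1\subseteq S$ and $B=S$ is finitely generated.

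The delicate point is exactly this reduction of the $b_j$ to elements of $S$. It relies on $I_1$ being abelian, which gives both $\ast$-triviality on $I_1$ and additive centrality of $I_1$ inside $I_1$, and on $(B,+)$ being abelian, which is where the hypothesis that $B$ is a brace enters.
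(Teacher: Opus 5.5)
Your proposal is correct and follows essentially the same route as the paper: induction on the derived length, writing $B=SI_1$ with $I_1=a_1^B+\cdots+a_m^B$, and using Lemma~\ref{lem:id-generated} together with the fact that $I_1$ is abelian to replace each $b_j$ in the iterated $\ast$-products by an element of $S$. Your single factorisation $b=sc=s+\lambda_s(c)$, which handles the left and right products with the same $s\in S$, is slightly tidier than the paper's two separate decompositions $b=s+x=s'x'$.
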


\begin{proof}
Let $B$ be an i-noetherian soluble brace. If $B$ is abelian, the theorem trivially holds. Suppose that the derived length of $B$ is $n > 1$, and let 
\[0 = I_0 \leq I_1 \leq \cdots \leq I_n = B\]
be an abelian series of $B$. By induction, we can assume that $B/I$ is finitely generated, with $I:= I_1$. Thus, $B = SI = S+I$, where $S = \langle s_1, \ldots, s_m\rangle$ is a finitely generated subbrace.

Since $B$ is i-noetherian, we can find $a_1, \dots, a_k\in I$ such that 
\[ I = a_1^B + \ldots +  a_k^B.\]
Let $a \in I$. We claim that for every $n\in \mathbb{N}$, and each $b_1, \ldots, b_n\in B$, it holds
\[\ast_n(b_1, \ldots, b_n)(a) = \ast_n(s_1, \ldots, s_n)(a),\   \text{for certain $s_1,\ldots, s_n\in S$.}\]
Let $b\in B$, and take $b = s+x=s'x'$ with $s,s'\in S$ and $x, x'\in I$. Thus, by~\eqref{eq:ast_dist_prod}, it follows that
\[ b \ast a = (s'x') \ast a = s' \ast (x' \ast a) + x'\ast a + s' \ast a = s' \ast a,\]
as $x'\ast a = 0$ because $I$ is abelian. Analogously, $a \ast b = a \ast (s+x) = a \ast s$.

Assume the claim holds for some $n \geq 1$. Let $b_1, \ldots, b_n, b_{n+1} \in B$ and let $y \in \ast_{n+1}(b_1, \ldots, b_{n+1})(a)$. Thus, either $y = b_{n+1} \ast y'$ or $y = y' \ast b_{n+1}$ for some $y' \in \ast_{n}(b_1, \ldots, b_n)(a) \subseteq I$. By induction, $y' \in \ast_n(x_1, \ldots, x_n)$ for certain $x_1, \ldots, x_n$. Take $b_{n+1} = s+x=s'x'$ for certain $s,s' \in S$ and $x,x'\in I$. The previous paragraph yields either $y = b_{n+1} \ast y' \in \ast_{n+1}(x_1, \ldots, x_n,x)(a)$ or $y = y' \ast x \in \ast_{n+1}(x_1, \ldots, x_n,x)(a)$, respectively. Thus, the claim holds.

Finally, applying Lemma~\ref{lem:id-generated}, we have that 
\[  a_i^B = \langle a_i, \ast_{n}(x_1, \ldots, x_n)(a_i) \mid n \in \mathbb{N},\ x_1, \ldots, x_n \in S\rangle_+ \leq \langle a_i, s_1, \ldots, s_m\rangle\]
for each $1 \leq i \leq k$. Hence, $I \leq \langle s_1, \ldots, s_m, a_1, \ldots, a_k\rangle$, and therefore, $B$ is finitely generated.
\end{proof}

For the class of two-sided skew braces, the following lemma also provides a description of the ideal generated by one element.

\begin{lem}
\label{lem:clausura-ideal-twosided}
Let $B$ be a two-sided skew brace and let $u\in B$. Take $S = \langle u \rangle_+$. Then, 
\[ u^B = \langle b + \lambda_a({}^cS) - b\mid a,b,c\in B\rangle_+\] 
\end{lem}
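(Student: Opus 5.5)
Call $J=\langle b + \lambda_a({}^cS) - b\mid a,b,c\in B\rangle_+$, where ${}^cS = cSc^{-1}$ is the multiplicative conjugate of $S=\langle u\rangle_+$. The proof is a double inclusion.

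\emph{The inclusion $J\subseteq u^B$.} This is immediate. The ideal $u^B$ contains $S$, and it is normal in $(B,\cdot)$, so it contains ${}^cS$. It is $\lambda$-invariant, so it contains $\lambda_a({}^cS)$. It is normal in $(B,+)$, so it contains $b+\lambda_a({}^cS)-b$. Being an additive subgroup, it therefore contains $J$.

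\emph{The inclusion $u^B\subseteq J$.} Since $u\in J$ (take $a=b=c=0$), it suffices to show that $J$ is an ideal. The natural characterisation to use is: $(J,+)\unlhd(B,+)$, $J$ is $\lambda$-invariant, and $(J,\cdot)\unlhd(B,\cdot)$. Closure under products will then follow, because $xy=x+\lambda_x(y)$.

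\begin{itemize}
\item \emph{Additive normality.} The generating set is closed under additive conjugation, since $d+(b+X-b)-d=(d+b)+X-(d+b)$. Hence $J$ is normal in $(B,+)$.
\item \emph{$\lambda$-invariance.} Each $\lambda_d$ is an additive automorphism, so
\[\lambda_d\bigl(b+\lambda_a({}^cS)-b\bigr)=\lambda_d(b)+\lambda_{da}({}^cS)-\lambda_d(b),\]
which is again a generator. Hence $\lambda_d(J)\subseteq J$.
\item \emph{Multiplicative normality.} This is where two-sidedness enters, through \cite[Lemma~4.1]{Nasybullov19}: for each $d$, the conjugation $\kappa_d\colon x\mapsto dxd^{-1}$ is an automorphism of $(B,+)$. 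Thus
\[{}^d\bigl(b+\lambda_a({}^cS)-b\bigr)={}^db+{}^d\lambda_a({}^cS)-{}^db.\]
It remains to rewrite ${}^d\lambda_a({}^cS)$ as a set of the form $\lambda_{a'}({}^{c'}S)$. Conjugation is a $(B,\cdot)$-automorphism that also preserves $+$, hence a skew brace automorphism, so $\kappa_d\lambda_a\kappa_d^{-1}=\lambda_{dad^{-1}}$. Therefore
\[{}^d\lambda_a({}^cS)=\lambda_{dad^{-1}}({}^{dc}S).\]
\end{itemize}

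Consequently $J$ is an ideal, $u^B\subseteq J$, and equality holds.

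The main obstacle I expect is the step $\kappa_d\lambda_a=\lambda_{dad^{-1}}\kappa_d$. It needs that conjugation is an automorphism of the whole skew brace, not just of the additive group. I would verify it directly: for $x\in B$,
\[\kappa_d(\lambda_a(x))=\kappa_d(-a+ax)=-\kappa_d(a)+\kappa_d(a)\kappa_d(x)=\lambda_{\kappa_d(a)}(\kappa_d(x)),\]
using additivity of $\kappa_d$ from the cited lemma. Everything else is bookkeeping on generators.
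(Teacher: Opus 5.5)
Your proposal is correct and follows the paper's proof: both show that the family $\{b+\lambda_a({}^cS)-b\}$ is closed under additive conjugations, under the maps $\lambda_d$, and under multiplicative conjugations (the last via \cite[Lemma~4.1]{Nasybullov19}), and both obtain the same formulas $(d+b)+\lambda_a({}^cS)-(d+b)$, $\lambda_d(b)+\lambda_{da}({}^cS)-\lambda_d(b)$ and ${}^db+\lambda_{{}^da}({}^{dc}S)-{}^db$. You are slightly more explicit than the paper about the easy inclusion and about why $J$ is a subbrace; for completeness, closure under multiplicative inverses also follows from $\lambda$-invariance, since $x^{-1}=-\lambda_{x^{-1}}(x)$.
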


\begin{proof}
Call $\mathcal{R} := \{b+\lambda_a({}^c S)-b\mid a,b,c\in B\}$. Since in two-sided skew braces, multiplicative conjugation is a skew brace automorphism, it suffices to check that $\alpha(S)\in \mathcal{R}$, for every $\alpha$ being an additive conjugation in~$B$, a multiplicative conjugation in~$B$, or a lambda map $\lambda_x$, with $x\in B$.

Take $a,b,c\in B$, and $S' = b+\lambda_a({}^c S)-b$. Suppose that $\alpha$ is an additive conjugation by an element $x \in B$. Then, it holds that
\[ \alpha(S') = x+b+\lambda_a({}^c S)-b-x = (x+b)+\lambda_a({}^c S)-(x+b)\in \mathcal{R}.\]
Suppose that $\alpha$ is a multiplicative conjugation by an element~$x$. Since, $B$ is two-sided, $\alpha$ is also an automorphism of the additive group. Then, it holds that
\[ \alpha(S') = {}^x(b+\lambda_a({}^c S)-b) ={}^xb+\lambda_{{}^xa}({}^{xc} S)-{}^xb\in \mathcal{R},\]
as ${}^x(\lambda_a(y)) = {}^x(-a + ay) = -{}^xa+{}^x(ay)=-{}^xa+{}^xa {}^xy$ for every $y\in {}^cS$.
Suppose that $\alpha = \lambda_x$, for some $x\in B$. Then, it holds that
\[ \alpha(S') = \lambda_x(b+\lambda_a({}^c S)-b) = \lambda_x(b)+\lambda_{xa}({}^c S)-\lambda_x(b)\in \mathcal{R}. \qedhere \]
\end{proof}

\begin{theo}
\label{teo:Hall-soluble-TWO-SIDEDskewbrace}
Every i-noetherian soluble two-sided skew brace is finitely generated.
\end{theo}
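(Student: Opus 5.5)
We argue by induction on the derived length $n$ of the i-noetherian soluble two-sided skew brace $B$, as in the proof of Theorem~\ref{teo:Hall-soluble-BRACE}. If $n\le 1$ then $B$ is abelian. Every subgroup of $(B,+)$ is then an ideal, since $\ast$ is trivial and $(B,+)$ is abelian. So $(B,+)$ satisfies the maximal condition on subgroups and is finitely generated. Since $a+b=ab$ in $B$, additive generators generate $B$ as a skew brace.

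For $n>1$, let $0=I_0\le I_1\le\cdots\le I_n=B$ be an abelian series and put $I=I_1$. The quotient $B/I$ is an i-noetherian soluble two-sided skew brace of smaller derived length, so by induction $B=S+I=SI$ with $S=\langle s_1,\dots,s_m\rangle$. Since $B$ is i-noetherian, $I=u_1^B+\cdots+u_k^B$ for some $u_i\in I$. It therefore suffices to show that each $u^B$ with $u\in I$ lies in $\langle s_1,\dots,s_m,u\rangle$.

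By Lemma~\ref{lem:clausura-ideal-twosided}, $u^B$ is additively generated by the sets $b+\lambda_a({}^c\langle u\rangle_+)-b$ with $a,b,c\in B$, so each generator has the form $b+\lambda_a({}^cu)-b$ up to integer multiples. The goal is to replace $a,b,c$ by elements of $S$. Write $a=sx$, $b=s'+x'$ and $c=s''x''$ with $s,s',s''\in S$ and $x,x',x''\in I$.

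All conjugates and $\lambda$-images of $u$ lie in $I$. Since $I$ is abelian we have $I\subseteq\Soc(I)\cap\zeta(I)$, so for every $x\in I$ the map $\lambda_x$ is the identity on $I$, and additive and multiplicative conjugation by $x$ are trivial on $I$. This gives the following reductions:
\begin{itemize}
\item ${}^{s''x''}u={}^{s''}({}^{x''}u)={}^{s''}u$;
\item $\lambda_{sx}(v)=\lambda_s(\lambda_x(v))=\lambda_s(v)$ for $v\in I$;
\item $s'+x'+w-x'-s'=s'+w-s'$ for $w\in I$.
\end{itemize}
Hence every generator equals $s'+\lambda_s({}^{s''}u)-s'$ with $s,s',s''\in S$, and this element lies in $\langle S\cup\{u\}\rangle$ because subbraces are closed under $+$, $\cdot$, $\lambda$ and both conjugations. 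Therefore $I\le\langle s_1,\dots,s_m,u_1,\dots,u_k\rangle$ and $B$ is finitely generated.

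The main obstacle is justifying these reductions. The key facts are that $\lambda_x$ acts trivially on $I$ and that $x\in I$ centralises $I$ both additively and multiplicatively. This needs care: conjugation by $x''$ acts on ${}^{s''}$-conjugates in the order dictated by the decomposition $c=s''x''$, and we must choose that ordering ($c=s''x''$ rather than $x''s''$) so that the $I$-part acts first on $u$, which lies in $I$. The same ordering issue arises for $a=sx$, where $\lambda_{sx}=\lambda_s\lambda_x$ requires the $I$-factor on the right.
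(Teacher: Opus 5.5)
Your proposal is correct and follows essentially the same route as the paper: induction on derived length gives $B=S+I=SI$ with $I$ an abelian ideal that is finitely generated as an ideal, and Lemma~\ref{lem:clausura-ideal-twosided} with the decompositions $a=sx$, $b=s'+x'$, $c=s''x''$ lets you replace $a,b,c$ by elements of $S$. The ordering concern in your last paragraph is unnecessary, since $I$ is normal in $(B,\cdot)$ and $\lambda$-invariant, so ${}^{x''s''}u={}^{s''}u$ and $\lambda_{xs}(v)=\lambda_s(v)$ also hold.
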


\begin{proof}
We proceed as in the proof of Theorem~\ref{teo:Hall-soluble-BRACE}, so that we can write $B = IS$, where $S = \langle s_1, \dots, s_m\rangle $ and $I$ is an abelian ideal such that $I = a_1^B + \dots + a_k^B$ for some $a_1, \dots, a_k \in I$. In this case, it suffices to check that 
\[ a_i^B \leq \langle a_i, s_1,\cdots, s_m\rangle, \quad \text{for every $1\leq i \leq k$.}\]
Let $1\leq i\leq k$ and call $U = \langle a_i \rangle_+$. Take $a,b,c\in B$, and write $a = t_1x_1$, $b = t_2 + x_2$ and $c = t_3x_3$ for some $x_1,x_2,x_3\in I$ and $t_1,t_2,t_3 \in S$. It follows that
\[ b + \lambda_a({}^c U) - b = t_2 + x_2 + \lambda_{t_1x_1}({}^{t_3x_3}U) - x_2 - t_2 = t_2 + \lambda_{t_1}({}^{t_3}U) - t_2\]
as $x_1, x_2, x_3 \in I$, $U \leq I$ and $I$ is an abelian skew brace. Therefore,
\[ a_i^B = \langle t_2 + \lambda_{t_1}({}^{t_3}U) - t_2\mid t_1, t_2, t_3 \in S\rangle \leq \langle a_i, s_1, \cdots, s_m\rangle\qedhere\]
\end{proof}

McLain's theorem \cite[Theorem~3.2]{McLain56} asserts that for locally nilpotent groups, we can go a step further in the previous Hall's result: the maximal condition on normal subgroups is equivalent to the maximal condition on subgroups, and therefore both are equivalent to the group being finitely generated. We study an analogous result for multipermutability and central nilpotency in skew braces, which are nilpotent notions in skew braces with a strong impact on the classification of solutions of the YBE.

\begin{lem}
\label{lem:fact-rightseries}
Let $B$ be a skew brace such that $B=(B\ast B)S$ for some subbrace $S$ of~$B$. Then, $B=B^{(n)}S$ for any $n\in\mathbb{N}$.
\end{lem}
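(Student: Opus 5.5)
The plan is to argue by induction on $n$. The cases $n=1$ and $n=2$ are trivial, the second being the hypothesis. Assume $B=B^{(n)}S$ for some $n\ge 2$; we want $B=B^{(n+1)}S$. Since every term of the right series is an ideal, $B^{(n+1)}S$ is a subbrace. We pass to the quotient $Q=B/B^{(n+1)}$ and write $\bar S$ for the image of $S$. Then it suffices to show: if $Q^{(n+1)}=0$ and $Q=Q^{(n)}\bar S$, then $Q^{(n)}\subseteq \bar S$. Lifting back then gives $B^{(n)}\subseteq B^{(n+1)}S$, and hence $B=B^{(n)}S\subseteq B^{(n+1)}S$.

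In $Q$ we have $Q^{(n)}\ast Q=0$, i.e. $\lambda_y=\mathrm{id}$ for every $y\in Q^{(n)}$. Since $Q^{(n)}=Q^{(n-1)}\ast Q$, it is enough to show that $x\ast b\in\bar S$ for every $x\in Q^{(n-1)}$ and $b\in Q$, because $\bar S$ is closed under sums.

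\emph{Reducing the first argument.} By hypothesis $x=y's'$ with $y'\in Q^{(n)}$ and $s'\in\bar S$. Note that $s'=y'^{-1}x\in Q^{(n-1)}\cap\bar S$, since $Q^{(n)}\subseteq Q^{(n-1)}$ and both are ideals. Using $x\ast b=\lambda_x(b)-b$ and $\lambda_{y'}=\mathrm{id}$, we get
\[
x\ast b=\lambda_{y'}\lambda_{s'}(b)-b=s'\ast b .
\]

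\emph{Reducing the second argument.} Write $b=s+y=sy$ with $s\in\bar S$ and $y\in Q^{(n)}$. Then~\eqref{eq:ast_dist_sum} gives
\[
s'\ast b=s'\ast s+s+s'\ast y-s .
\]
Here $s'\ast s\in\bar S$, so everything reduces to controlling the correction term $s'\ast y=\lambda_{s'}(y)-y$. This term lies in $Q^{(n)}$ but not a priori in $\bar S$.

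This correction term is the main obstacle. The right series only kills $\ast$-products whose \emph{first} argument is deep in the series, and $s'\ast y$ has its deep entry on the right.

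\emph{Handling the correction term.} I would first try to remove it by a secondary induction. The terms $s'\ast y$ with $s'\in\bar S$ and $y\in Q^{(n)}$ generate $\bar S\ast Q^{(n)}$. The computation above shows
\[
Q^{(n)}\subseteq \langle \bar S\rangle_+ + \bar S\ast Q^{(n)} .
\]
I would iterate this, substituting the decomposition of $y$ into $s'\ast y$ and expanding with~\eqref{eq:ast_dist_sum}. This expresses elements of $Q^{(n)}$ through elements of $\bar S$ modulo iterated left products $\bar S\ast(\bar S\ast\cdots\ast Q^{(n)})$.

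To terminate the iteration I would look for some additional vanishing. The first candidate is the fact that elements of $Q^{(n)}$ act trivially by $\lambda$, combined with~\eqref{eq:ast_dist_prod}, which should let the iterated products be rewritten as products whose first argument is deep in the right series. If that fails, I would instead try to redo the outer induction in the opposite direction: derive $B=B^{(n)}S$ from $B=(B\ast B)S$ via the quotients $B/B^{(k)}$ for $k=n,n-1,\dots$, so that at each step only a single layer $B^{(k)}/B^{(k+1)}$ has to be absorbed into $S$.
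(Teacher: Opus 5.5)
Your reductions are correct, and you have located the crux exactly: after replacing $x$ by $s'$, everything hinges on $s'\ast y=\lambda_{s'}(y)-y$ with $s'\in\bar S$ and $y\in Q^{(n)}$. But the proposal stops there. Neither fallback is carried out, so there is no proof. More seriously, the gap cannot be closed: this term need not lie in $\bar S$, and the statement is false as written.

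Let $\tau(a,b)=(b,a)$ on $\mathbb{F}_3^2$. On $B=\mathbb{F}_3^2\times\mathbb{Z}/2$ define $(u,h)+(v,k)=(u+v,h+k)$ and $(u,h)(v,k)=(u+\tau^h(v),h+k)$, so $(B,\cdot)\cong\mathbb{F}_3^2\rtimes C_2$. This is a brace with $\lambda_{(u,h)}=\tau^h\times\mathrm{id}$, hence $(u,h)\ast(v,k)=(\tau^h(v)-v,0)$. It follows that:
\begin{itemize}
\item $B\ast B=\{(c,-c)\mid c\in\mathbb{F}_3\}\times 0$;
\item $B^{(3)}=0$, because elements of $B\ast B$ have trivial $\lambda$;
\item $S=\{(a,a)\mid a\in\mathbb{F}_3\}\times\mathbb{Z}/2$ is a subbrace, since $\tau$ fixes the diagonal;
\item $(B\ast B)S=B$, because $(1,-1)$ and $(1,1)$ span $\mathbb{F}_3^2$.
\end{itemize}
Yet $B^{(3)}S=S$ has order $6<18$. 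This $B$ is finite and multipermutational of level~$2$, with $\Soc(B)=\mathbb{F}_3^2\times 0$. So the failure occurs even under the hypotheses of Theorem~\ref{teo:McLain-local-multiperm}. In your notation with $n=2$, take $s'=((0,0),1)$ and $y=((1,-1),0)$. Then $s'\ast y=y\notin S$. The point is that $\lambda_{s'}$ does not fix $Q^{(n)}$, so no rewriting with \eqref{eq:ast_dist_sum} or \eqref{eq:ast_dist_prod} can eliminate the term.

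Your second fallback, absorbing one layer at a time, is essentially the paper's route. The paper factors out $B^{(n)}$, places $B\ast B$ inside $\Soc_{n-2}(B)$, and descends the socle series. It meets the same term, $(s_1'z_1')\ast z_2$ with $z_2\in\Soc_{n-2-k}(B)$. It disposes of it by asserting that this term lies in $\Soc_{n-3-k}(B)$, i.e.\ that $B\ast\Soc_j(B)\subseteq\Soc_{j-1}(B)$. That inclusion fails in general: the socle terms are $\lambda$-invariant, not $\lambda$-fixed. In the example above, $B\ast\Soc(B)=B\ast B\neq 0=\Soc_0(B)$.

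An infinite instance is Example~\ref{ex:skbr}. Identify $G$ with $\mathbb{Z}[t,t^{-1}]$ so that $\lambda_x$ is multiplication by $t$. Then $B\ast B=(t-1)G$ and $B^{(3)}=0$. The subbrace $S=(t-2)G\oplus\langle x\rangle$ satisfies $(B\ast B)S=B$, but $S\neq B$ since $t-2$ is not a unit.

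So your diagnosis of the obstruction is right, and the paper's argument has a gap at exactly that point. The lemma needs an additional hypothesis, and the step of Theorem~\ref{teo:McLain-local-multiperm} that invokes it needs a separate justification.
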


\begin{proof}
Let $n > 2$ and take the series
\[ B^{(n)} \leq B^{(n-1)} \leq \dots \leq B^{(2)} = B\ast B \leq B .\]
Without loss of generality, we may assume $B^{(n)} = 0$ (otherwise, we could work with $\bar{B} = B/B^{(n)}$). Then, we know that 
\[ B^{(n-1)} \leq \operatorname{Soc}(B),\  \ldots,\  B^{(2)} = B\ast B \leq \operatorname{Soc}_{n-2}(B).\] 
Thus, $B = S(B\ast B)= S\big((B\ast B)\cap \operatorname{Soc}_{n-2}(B)\big)$.

Assume that $B = S\big(\operatorname{Soc}_{n-2-k}(B) \cap (B\ast B)\big)$ for some $0 \leq k < n-2$. We proceed to see that $B = S\big(\operatorname{Soc}_{n-2-(k+1)}(B) \cap (B\ast B)\big)$. If $b_1$, $b_2 \in B$, we can take $s_i$, $s_i' \in S$, $z_i$, $z_i'\in \operatorname{Soc}_{n-2-k}(B)$ such that $b_i = s_i+z_i= s_i'z_i'$, $i = 1$, $2$. Thus, applying~\eqref{eq:ast_dist_sum} and~\eqref{eq:ast_dist_prod}
\begin{align*}
b_1 \ast b_2 & = (s_1'z_1') \ast (s_2+z_2) = (s_1'z_1') \ast s_2 + s_2 +(s_1'z_1') \ast z_2 -s_2= \\
& = s_1'\ast(z_1'\ast s_2) + z_1'\ast s_2 + s_1' \ast s_2 + s_2 +(s_1'z_1')\ast z_2 -s_2
\end{align*}
Therefore, we can write $b_1 \ast b_2 = s_1' \ast s_2 + w$, with $w \in \operatorname{Soc}_{n-2-k-1}(B)$, as  $z_1' \ast s_2, (s_2+(s_1'z_1')\ast z_2-s_2) \in \operatorname{Soc}_{n-2-k-1}(B)$. 
Indeed, we have that 
\[ w \in \operatorname{Soc}_{n-2-k-1}(B) \cap (B\ast B),\ \text{as $b_1\ast b_2, s'_1\ast s_2\in B\ast B$.}\]
Therefore, we conclude that $B\ast B \leq S\big(\operatorname{Soc}_{n-2-k-1}(B) \cap (B\ast B)\big)$. Hence, it follows that
\begin{align*}
B & = S\big(\operatorname{Soc}_{n-2-k}(B) \cap (B\ast B)\big) \leq S(B\ast B) \\
& \leq  S\big(S\operatorname{Soc}_{n-2-k-1}(B) \cap (B\ast B)\big) \\
& = S\big(\operatorname{Soc}_{n-2-k-1}(B) \cap (B\ast B)\big).
\end{align*}
We conclude by induction that $B=S\big(\operatorname{Soc}_0(B)\cap (B\ast B)\big)=S$.
\end{proof}

\begin{theo}
\label{teo:McLain-local-multiperm}
Let $(B,+,\cdot)$ be a locally multipermutational skew brace. If $B$ is i-noetherian, then $B$ is finitely generated.
\end{theo}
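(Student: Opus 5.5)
The plan is to reduce to a finitely generated complement of $B\ast B$, as in the group-theoretic argument of McLain, and then apply Lemma~\ref{lem:fact-rightseries}.

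First, $B\ast B = B^{(2)}$ is an ideal of $B$. The quotient $B/(B\ast B)$ satisfies $\bar a\ast\bar b = 0$ for all $\bar a,\bar b$, so $\lambda$ is trivial there. Hence $(B/(B\ast B),\cdot) = (B/(B\ast B),+)$ as groups, and ideals of this quotient are exactly the normal subgroups of the additive group. Since $B$ is i-noetherian, so is $B/(B\ast B)$. Its additive group therefore satisfies the maximal condition on normal subgroups. Moreover, $B$ is locally multipermutational, and multipermutational skew braces are of nilpotent type. So every finitely generated subgroup of $(B/(B\ast B),+)$ lies in a finitely generated subbrace whose image has finite multipermutational level, and is thus nilpotent. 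Hence $(B/(B\ast B),+)$ is a locally nilpotent group with the maximal condition on normal subgroups, and McLain's theorem \cite[Theorem~3.2]{McLain56} shows it is finitely generated. Choose $s_1,\dots,s_m\in B$ whose images generate $B/(B\ast B)$, and set $S=\langle s_1,\dots,s_m\rangle$. Then $B=(B\ast B)S$.

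Next, $S$ is a finitely generated subbrace, so by local multipermutability $S$ is multipermutational of some level $n$. Consequently $S^{(n+1)}=0$. By Lemma~\ref{lem:fact-rightseries}, $B=B^{(k)}S$ for every $k$.

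The main obstacle is passing from $S$ back to $B$, because $B$ itself is not known to be right nilpotent. I would exploit the i-noetherian hypothesis. The right series $B^{(k)}$ is a descending chain of ideals, so it gives no stabilisation directly. Instead, consider the ideal $S^B$ generated by $S$, together with the ascending chain of ideals generated by finite subsets. Since $B$ is i-noetherian, $B\ast B$ is the ideal generated by finitely many elements $c_1,\dots,c_k$. Each $c_j$ lies in some $B^{(k)}S$-decomposition, and I would enlarge $S$ to $T=\langle s_1,\dots,s_m,c_1,\dots,c_k\rangle$. Then $T$ is still finitely generated, and hence multipermutational of some level $N$.

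Finally, apply Lemma~\ref{lem:fact-rightseries} to $T$ and pass to the quotient $\bar B=B/B^{(N+2)}$. In $\bar B$ the argument in the proof of that lemma, which peels off socle layers, applies verbatim: $\bar B$ is right nilpotent, so $\bar B = \bar T$. It remains to show $B^{(N+2)}\le T$. For this I would use that $B^{(N+2)}$ is generated as an ideal by the $\ast$-products of length $N+2$ of the finitely many generators modulo deeper terms, and that $T$ has right nilpotency class bounded by $N$. I expect some iterated version of this step to be needed; it is the delicate part. If it fails, the fallback is to take $B_0=\bigcup$ of an ascending chain of finitely generated subbraces $T_1\le T_2\le\cdots$. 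I would argue that the ideals $T_i^B$ stabilise by the i-noetherian hypothesis, conclude $B=T_i^B$, and then use $B=B^{(k)}T_i$ to get $B=T_i$.
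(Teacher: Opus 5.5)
\textbf{There is a genuine gap: the proposal never proves the right series of $B$ reaches zero.}

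Your reduction matches the paper's. You pass to the trivial skew brace $B/(B\ast B)$ and apply McLain's theorem to its locally nilpotent additive group; your justification of local nilpotency is more explicit than the paper's. This gives $B=S(B\ast B)$ with $S$ finitely generated, and then $B=SB^{(k)}$ for all $k$ by Lemma~\ref{lem:fact-rightseries}. The proof stops where the real work begins: you never show that $B^{(k)}=0$, or $B^{(k)}\le S$, for some $k$.

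Your attempts to close this step do not work:
\begin{itemize}
\item Enlarging $S$ by ideal generators $c_1,\dots,c_k$ of $B\ast B$ gains nothing, because $c_j^B$ is not contained in $\langle c_j\rangle$.
\item No computation with $\ast$-products of generators of $T$ can give $B^{(N+2)}\le T$. From $B=TB^{(N+2)}$ and $T^{(N+1)}=0$ you only learn that $B/B^{(N+2)}$ is an image of $T$, i.e.\ $B^{(N+1)}=B^{(N+2)}$. The obstruction is a possibly nonzero ideal $J$ with $J\ast B=J$, and this cannot be seen inside $T$.
\item The fallback fails for the same reason. The inference from $B=T^B$ and $B=B^{(k)}T$ for all $k$ to $B=T$ is false in general. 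Take the almost trivial skew brace on $A_5$ ($a+b=ba$, $a\cdot b=ab$), where $B\ast B=B$ and the only ideals are $0$ and $B$, and let $T$ be a subgroup of order~$3$. So local multipermutability must be used essentially at this point.
\end{itemize}

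\textbf{The missing ingredient} is a second use of the maximal condition, together with the skew brace analogue of the fact that chief factors of locally nilpotent groups are central. Let $n$ be the level of $S$; no enlargement is needed. Put $J:=B^{(n+1)}=B^{(n+2)}$, so $J\ast B=J$.

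Suppose $J\neq0$. The maximal condition on ideals gives an ideal $L$ maximal among the ideals of $B$ properly contained in $J$. Then $J/L$ is a minimal ideal of the locally multipermutational skew brace $B/L$. Such minimal ideals lie in the socle, so $J=J\ast B\le L<J$, a contradiction. Hence $J=0$ and $B=SJ=S$.

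The socle statement is proved as for groups:
\begin{itemize}
\item Let $0\neq a\in M:=J/L$. Since $M\ast(B/L)=M=a^{B/L}$, write $a=\sum_i\pm m_i\ast c_i$ with $m_i\in a^{B/L}$.
\item Choose a finitely generated subbrace $D$ containing $a$ and the $c_i$ with every $m_i\in a^D$. This is possible because $a^{B/L}$ is the directed union of the $a^D$.
\item Take $j$ minimal with $a^D\le\Soc_j(D)$. Then $a\in a^D\ast D\le\Soc_{j-1}(D)$, so $a^D\le\Soc_{j-1}(D)$, contradicting the choice of $j$.
\end{itemize}
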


\begin{proof}
Only one implication is in doubt. Assume that $B$ is i-noetherian. Since the maximal condition on ideals is preserved by quotients, $B/B\ast B$ is a trivial skew brace which is also i-noetherian. Thus, $B/B\ast B$ as a trivial skew brace is isomorphic to a finitely generated nilpotent group. Therefore, we can write $B = S(B\ast B)$ for some finitely generated subbrace $S = \langle s_1, \ldots, s_m\rangle$ with $s_1, \ldots, s_m\in B$. Applying Lemma~\ref{lem:fact-rightseries}, we have that $B = SB^{(n)}$ for every $n\in \mathbb{N}$. 

By hypothesis, $S$ has finite multipermutational level. Suppose that $n$ is the multipermutational level of~$S$. Since $B/B^{(n+2)}$ is isomorphic to $S/(S \cap B^{(n+2)})$, we have that $B/B^{(n+2)}$ has finite multipermutational level at most~$n$. In particular, $J:=B^{(n+1)} =B^{(n+2)}$, which means that $J\ast B = J$.

If $J = 0$ then we are done, as in this case, $B = SJ= S$ is finitely generated. Otherwise, since $B$ is i-noetherian, we can take a maximal ideal $L$ of $B$ contained in~$J$. Thus, $J/L$ is a chief factor of $B$. Arguing as in~\cite[Theorem~4.6]{BallesterEstebanFerraraPerezCTrombetti25-pacificjm-cent-nilp} for the socle of a skew brace, it follows that $J/L \leq \Soc(B/L)$. Hence, it holds that $J = J\ast B \leq L$ which is strictly contained in~$J$, a contradiction.  
\end{proof}

\begin{cor}
\label{cor:McLain-local-central}
    Let $B$ be a locally centrally nilpotent skew brace. The following are equivalent:
    \begin{enumerate}
        \item $B$ is i-noetherian;
        \item $B$ is finitely generated and it is centrally nilpotent;
        \item $B$ is s-noetherian.
    \end{enumerate}
\end{cor}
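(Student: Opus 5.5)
The plan is to prove the cycle of implications 3 $\Rightarrow$ 1 $\Rightarrow$ 2 $\Rightarrow$ 3.

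For 3 $\Rightarrow$ 1, note that every ideal is a subbrace, so the maximal condition on subbraces implies the maximal condition on ideals.

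For 1 $\Rightarrow$ 2, locally centrally nilpotent skew braces are locally multipermutational, since $\zeta_n(S)\leq\Soc_n(S)$. So Theorem~\ref{teo:McLain-local-multiperm} applies, and $B$ is finitely generated. Since $B$ is then a finitely generated subbrace of itself, local central nilpotency gives that $B$ is centrally nilpotent.

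For 2 $\Rightarrow$ 3, this is the real content, namely that a finitely generated centrally nilpotent skew brace is s-noetherian. I would argue by induction on the central nilpotency class $n$.

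\emph{Base case.} If $n=1$, then $B=\zeta(B)$ is trivial with abelian group $(B,+)=(B,\cdot)$. Subbraces are just subgroups, and a finitely generated abelian group is noetherian.

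\emph{Inductive step.} For general $n$, let $Z=\zeta(B)$. Then $B/Z$ is finitely generated and centrally nilpotent of class $n-1$, hence s-noetherian by induction. Given a subbrace $S$, a chain of subbraces of $B$ stabilises provided both the images $SZ/Z$ and the intersections $S\cap Z$ stabilise. This is the usual modular argument: if $S_1\leq S_2$ with $S_1Z=S_2Z$ and $S_1\cap Z=S_2\cap Z$, then $S_1=S_2$. Since $Z$ is central in both groups, subsets of $Z$ closed under $+$ are subbraces, with the same operations. So it suffices that $(Z,+)$ is a noetherian group, i.e.\ that $\zeta(B)$ is a finitely generated abelian group.

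\emph{Main obstacle.} The hard step is showing that $\zeta(B)$, or more generally each factor $\zeta_{i}(B)/\zeta_{i-1}(B)$, is finitely generated. I would try to get this from the group side. $(B,+)$ is nilpotent, since $B$ is centrally nilpotent and hence of nilpotent type. Likewise $(B,\cdot)$ is nilpotent, because the central series is a central series of $(B,\cdot)$. If $(B,+)$ is finitely generated, it is then a finitely generated nilpotent group, hence noetherian. Every subbrace is in particular an additive subgroup, so the chain condition on subbraces follows directly.

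It therefore remains to show that $(B,+)$ is finitely generated when $B=\langle X\rangle$ with $X$ finite. Again induct on the class. By induction $B/Z$ has finitely generated additive group, so it suffices to see that $Z$ is additively finitely generated. A cleaner route is via the star-product layers. Since $\zeta_{i}/\zeta_{i-1}$ is central, the maps $(a,b)\mapsto a\ast b$, $[a,b]_+$ and the multiplicative commutator from $\zeta_{i+1}\times B$ to $\zeta_i/\zeta_{i-1}$ are bi-additive. Using the lower central-type series $\Gamma_{k}$ generated by $\ast$-products and additive commutators in both orders, the factors $\Gamma_k/\Gamma_{k+1}$ are images of tensor products of earlier factors with $B/\Gamma_2$. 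By induction they are finitely generated abelian groups, starting from $B/\Gamma_2$, which is generated by the images of $X$. Central nilpotency forces $\Gamma_{n+1}=0$, so $(B,+)$ has a finite series with finitely generated factors, and is therefore finitely generated. Verifying this multilinearity for the mixed series is where I expect most of the work to lie.
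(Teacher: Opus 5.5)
\textbf{Verdict: there is a gap.} Your cycle of implications and your arguments for $3\Rightarrow 1$ and $1\Rightarrow 2$ are the paper's. Reducing $2\Rightarrow 3$ to finite generation of $(B,+)$ is also sound, since subbraces are then subgroups of a finitely generated nilpotent group. But that finite generation is the entire content of $2\Rightarrow 3$, and your sketch does not establish it as stated.

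\textbf{Where it fails.} Work modulo $\Gamma_{k+1}$ with $a\in\Gamma_{k-1}$. Centrality of $\Gamma_k/\Gamma_{k+1}$ together with \eqref{eq:ast_dist_sum} and \eqref{eq:ast_dist_prod} shows that $b\mapsto a\ast b$ and $b\mapsto [a,b]_+$ are homomorphisms of $(B,+)$. The map $b\mapsto b\ast a$, however, is only a homomorphism of $(B,\cdot)$: \eqref{eq:ast_dist_prod} gives $(bc)\ast a\equiv b\ast a+c\ast a$, and nothing controls $(b+c)\ast a$. To make this map factor through the abelian group $B/\Gamma_2$, you would need $\Gamma_2\ast\Gamma_{k-1}\subseteq\Gamma_{k+1}$. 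That is a skew brace version of $[\gamma_2,\gamma_{k-1}]\le\gamma_{k+1}$. You have not proved it, and it does not follow from bilinearity on central factors. So the claim that $\Gamma_k/\Gamma_{k+1}$ is an image of tensor products with $B/\Gamma_2$ is unsupported.

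\textbf{How to repair it.} Pair with $B/\Gamma_k$ instead of $B/\Gamma_2$, and carry both group structures through the induction on $k$.
\begin{enumerate}
\item Assume $(B/\Gamma_k,+)$ and $(B/\Gamma_k,\cdot)$ are finitely generated. This holds for $k=2$, since $B/\Gamma_2$ is an abelian skew brace generated by the image of $X$.
\item All three maps vanish on $\Gamma_k$ in each variable.
\item In the first variable they are homomorphisms on $\Gamma_{k-1}/\Gamma_k$. There $+$ and $\cdot$ agree, because this factor is central in $B/\Gamma_k$.
\item Hence $\Gamma_k/\Gamma_{k+1}$ is additively generated by the finitely many elements $a_i\ast u_j$, $v_j\ast a_i$, $[a_i,u_j]_+$. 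Here the $a_i$ generate $\Gamma_{k-1}/\Gamma_k$, the $u_j$ generate $(B/\Gamma_k,+)$, and the $v_j$ generate $(B/\Gamma_k,\cdot)$.
\item Since $+$ and $\cdot$ also agree on the central factor $\Gamma_k/\Gamma_{k+1}$, both groups of $B/\Gamma_{k+1}$ are finitely generated.
\end{enumerate}
Then $\Gamma_{c+1}=0$ finishes the argument.

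\textbf{Comparison with the paper.} The paper follows your first sketch: induction on the class, quotient by $\gamma_c(B)$, and the modular law. It disposes of $S_i\cap\gamma_c(B)$ by asserting that $\zeta(B)$ is s-noetherian, with a pointer to Jennings. That assertion is exactly the finite generation you isolated, so completing your argument as above also supplies the step the paper leaves implicit.
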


\begin{proof}
We just need to prove that if $B$ is a finitely generated centrally nilpotent skew brace, then $B$ is s-noetherian. We can adapt the argument of Jennings~\cite{Jennings44} for nilpotent rings and groups. 

We argue by induction on the centrally nilpotent class. Skew braces of centrally nilpotent class $1$ are abelian and so the result is trivially true for them as all subbraces are ideals. Assume now that the implication holds for centrally nilpotent skew braces with classes less than $c > 1$. Let $B$ be a centrally nilpotent skew brace of class~$c$. Let 
\[S_1\subseteq S_2\subseteq\dots\subseteq S_m\subseteq\dotsb\]
be a chain of subbraces of~$B$. Let $\bar B=B/\gamma_{c}(B)$. We have that $\bar B$ is a centrally nilpotent brace of class~$c-1$. Let $\bar S_i=S_i\gamma_c(B)/\gamma_c(B)$, we have that
\[\bar S_1\subseteq \bar S_2\subseteq \dots\subseteq \bar S_m\subseteq \dotsb\]
is a chain of subbraces of $\bar B$. By induction, there exists an integer $j$ such that $\bar S_i=\bar S_j$ for all $i\ge j$. This implies that $S_i+\gamma_c(B)=S_j+\gamma_c(B)$ for all $j\ge i$. Now consider $T_i=S_i\cap \gamma_c(B)$. By \cite[Proposition~4.2]{BallesterEstebanFerraraPerezCTrombetti25-pacificjm-cent-nilp}, $T_i\subseteq \gamma_c(B)\subseteq \zeta(B)$. Hence the subbrace $T_i$ turns out to be an ideal of~$B$. Since $\zeta(B)$ is also s-noetherian and
\[T_1\subseteq T_2\subseteq\dots\subseteq T_m\subseteq \dotsb,\]
there exists a natural number $k$ such that $T_i=T_k$ for all $i\ge k$. Let $l:=\max\{j, k\}$. For $i\ge l$, we have that
    \[S_i=S_i\cap S_i\gamma_c(B)=S_i\cap S_l\gamma_c(B)=S_l(S_i\cap \gamma_c(B))=S_l(S_l\cap \gamma_c(B))=S_l.\]
Hence, we conclude that $B$ is also s-noetherian.
\end{proof}

This corollary does not hold for locally multipermutational skew braces. In fact, if a skew brace is s-noetherian, then all its subbraces are finitely generated. The next example shows that there are finitely generated skew braces with finite multipermutational level which are not s-noetherian.

\begin{ex}
\label{ex:skbr}
Consider an infinite direct sum of infinite cyclic groups, $(G,+)=\bigoplus_{n\in \mathbb{Z}}(\mathbb{Z},+)$. Take $(B,+) = G \oplus \langle x\rangle_+$, where $\langle x\rangle_+$ is also an infinite cyclic group, and define 
$$\lambda\colon \langle x\rangle_+\longrightarrow\operatorname{Aut}(G,+),$$ 
such that $\lambda_x(1_n)=1_{n+1}$ for every $n\in\mathbb{Z}$.

In $(B,+)$, we can construct a skew brace by taking $(B,\cdot)=(G,+)\rtimes_{\lambda}\langle x\rangle$ and the lambda function coincides with the function defined earlier. Then, we can see that $B$ is finitely generated: for instance, $B = \langle 1_0, x\rangle$. It is also easy to check that $\operatorname{Soc}(B)=G$, which is clearly not finitely generated. Thus, $B$ is not s-noetherian but it is multipermutational of level $2$, as $B/G$ is an abelian skew brace.
\end{ex}

In \cite{McLain56}, McLain has also investigated the connection between the minimal condition on subgroups and the minimal conditions on normal subgroups in the setting of locally nilpotent groups. His result depends on a key result in groups, whose skew brace analogue is still an intriguing open problem. Recall that in~\cite{BallesterEstebanFerraraPerezCTrombetti25-pacificjm-cent-nilp}, we say that a subbrace $S$ of a skew brace $B$ \emph{has finite index} if both indices $|(B,+):(S,+)|$ and $|(B,\cdot):(S,\cdot)|$ are finite. Furthermore, in~\cite{VanAntwerpen2026} the authors prove that in that case both indices coincide.

\begin{quest}
\label{quest:index}
Given a subbrace $S$ of a skew brace $B$ with finite index, does $S_B$ have finite index too?
\end{quest}

We are able to show that this question is affirmative in some cases. Firstly, we consider some restrictions on the socle and central series of a skew brace.

\begin{prop}
\label{prop:fingenindex}
Let $(B,+,\cdot)$ be a skew brace such that $(B,\cdot)$ is finitely generated and there is a natural number $n$ such that $\operatorname{Soc}_{n}(B)$ has finite index in $B$, then every subbrace of finite index contains an ideal of finite index.
\end{prop}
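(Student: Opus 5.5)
The plan is to argue by induction on $n$, peeling off one socle layer at a time and using the finite generation of $(B,\cdot)$ to control the number of relevant conjugates. If $n=0$, then $\Soc_0(B)=0$ has finite index, so $B$ is finite and $0$ is the required ideal. Suppose $n\geq 1$ and let $S$ be a subbrace of finite index. Put $Z=\Soc(B)$. The quotient $\bar B=B/Z$ again has finitely generated multiplicative group. Moreover, $\Soc_{n-1}(\bar B)=\Soc_n(B)/Z$ has finite index in $\bar B$, and $SZ/Z$ is a subbrace of finite index. By induction there is an ideal $J$ of $B$ with $Z\leq J\leq SZ$ and $J/Z$ of finite index in $\bar B$. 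It is therefore enough to find an ideal of finite index of $B$ inside $S\cap J$.

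The first step is to find a large ideal of $B$ inside $S\cap Z$. The ideal $Z$ lies in $\mathrm{Z}(B,+)$, and both operations coincide on it. Hence any subset of $Z$ that is an additive subgroup invariant under every $\lambda_b$ and under multiplicative conjugation by every $b\in B$ is already an ideal. Both kinds of maps act on $(Z,+)$ by automorphisms: $\lambda_b$ because $Z$ is $\lambda$-invariant, and conjugation because $Z$ is multiplicatively normal and $z\cdot w=z+w$ on $Z$. Let $\Gamma$ be the group of automorphisms of $(Z,+)$ generated by these maps. The subgroup $S\cap Z$ has finite index in $Z$ and is stabilised by the images of all $s\in S$, since $S$ is a subbrace. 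Because $S$ has finite multiplicative index, I expect $S\cap Z$ to have only finitely many images under $\Gamma$. The main point to verify is that the stabiliser of $S\cap Z$ in $\Gamma$ has finite index there, using that $\lambda$ is a homomorphism from $(B,\cdot)$. Then $M=\bigcap_{\gamma\in\Gamma}\gamma(S\cap Z)$ is a finite intersection of finite-index subgroups. So $M$ is an ideal of $B$ of finite index in $Z$ contained in $S$, and we may pass to $B/M$, where $Z/M$ is finite.

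The second step works in $B/M$, where we now have $J\leq SZ$ with $Z$ finite, and is where I expect the main obstacle. The idea is to cut $J$ down by the kernel of its action on the finite ideal $Z$. Let $C$ be the set of $j\in J$ that act trivially on $Z$ by $\lambda$, by multiplicative conjugation and by additive conjugation. This has finite index in $J$, being the kernel of a map to a finite group, once we check that these actions combine into a multiplicative homomorphism. Then I intersect with the multiplicative core of $S$. That core has finite index because $(S,\cdot)$ has finite index in the finitely generated group $(B,\cdot)$, so it has only finitely many conjugates. The delicate part is to show that the resulting set, or the ideal it generates, is both contained in $S$ and of finite index. Here I would use that $Z$ is central and trivially $\lambda$-acting, so every $j\in J$ can be written as $j=sz$ with $s\in S$ and $z\in Z$. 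Multiplying by socle elements then changes $\ast$-products only by elements of $Z$, as in the computations of Lemma~\ref{lem:fact-rightseries}. This should make the core of $S\cap J$ differ from the full finite-index candidate only by a subset of the finite set $Z$. If this bookkeeping fails, the fallback is to use again the finiteness of the number of subgroups of a given index in the finitely generated group $(B,\cdot)$. That gives finitely many subbraces of $J$ of index at most $|B:S|$ lying in $S$, and we would take their intersection, checking closure under the operations defining an ideal.
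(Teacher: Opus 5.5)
Your first step is correct, and it is in substance the paper's key observation. For $a\in\Soc(B)$ one has $bab^{-1}=\lambda_b(a)$, so $\Gamma$ is just the image of $b\mapsto\lambda_b|_Z$. The image of $(S,\cdot)$ has finite index in $\Gamma$, which settles the point you flagged. Hence $M=\bigcap_b\lambda_b(S\cap Z)=\mathrm{Core}_{(B,\cdot)}(S)\cap\Soc(B)$ is an ideal of finite index in $\Soc(B)$ contained in $S$.

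\textbf{The gap is your second step.} All the remaining content of the proposition lies there, and none of your mechanisms closes it. It is clearer if you reduce to $S_B=0$ at the outset. Then your first step gives $Z=\Soc(B)$ finite, and your inductive hypothesis gives an ideal $J\supseteq Z$ of finite index with $J\le SZ$. What must be proved is exactly that $B$ (equivalently $J/Z$) is finite.
\begin{itemize}
\item The set $C\cap\mathrm{Core}_{(B,\cdot)}(S)$ is only a finite-index normal subgroup of $(B,\cdot)$. Nothing shows it is an additive subgroup, $\lambda$-invariant or additively normal, and the ideal it generates has no reason to stay inside $S$.
\item The computations of Lemma~\ref{lem:fact-rightseries} only compare $\ast$-products of elements of $J=(S\cap J)Z$ with those of elements of $S\cap J$, modulo $Z$. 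They give no control over $\lambda_b$, $x\mapsto b+x-b$ or $x\mapsto bxb^{-1}$ for $b\notin SZ$. Also, agreeing modulo the finite set $Z$ is not containment in $S$.
\item The fallback fails for a structural reason. An intersection of all subbraces of bounded index is invariant under skew brace automorphisms. But $\lambda_b$ and the two conjugations are not skew brace automorphisms; they need not even send subbraces to subbraces. Restricting to subbraces lying in $S$ destroys even automorphism-invariance. Indeed, the same closure check with $J=B$ would answer Question~\ref{quest:index} for every skew brace with finitely generated multiplicative group, with no socle hypothesis at all.
\end{itemize}

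\textbf{What is missing} is a quantitative link between the finite socle and the layers above it. The bare inductive hypothesis on $B/\Soc(B)$ cannot supply this, since it only produces some finite-index ideal inside $SZ/Z$. The paper obtains the link from a Baer-type result, proved as in the cited Theorem~4.16 on central nilpotency: if $\Soc(B)$ is finite, every factor $\Soc_{i+1}(B)/\Soc_i(B)$ has finite exponent. The argument then runs as follows.
\begin{itemize}
\item $\Soc_n(B)$ has finite index in the finitely generated group $(B,\cdot)$, so it is finitely generated.
\item Its top factor is the socle of $B/\Soc_{n-1}(B)$, hence an abelian group. Being finitely generated of finite exponent, it is finite.
\item Therefore $\Soc_{n-1}(B)$ has finite index, and descending one concludes that $B/S_B$ is finite.
\end{itemize}
To complete your proof you would have to supply such a bound yourself. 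Once it is available, the induction on $n$ is no longer needed.
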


\begin{proof}
Let $S$ be a subbrace of finite index and suppose without loss of generality that $S_B=0$. Observe that for every $a\in \Soc(B)$ and every $b\in B$ it follows that
\[ bab^{-1} = b(a+b^{-1}) = ba - b = b + \lambda_b(a) - b = \lambda_b(a)\]
as $\Soc(B)$ is an $\lambda$-invariant. Thus, $I = \mathrm{Core}_{(B,\cdot)}((S,\cdot)) \cap \operatorname{Soc}(B)$ is an ideal, as $I$ is $\lambda$-invariant (i.e. $B\ast I \subseteq I$), $(I,+)$ is normal in $(B,+)$ and $I \ast B=0$. Therefore, by assumption $I$ must be $0$, and since $\mathrm{Core}_{(B,\cdot)}((S,\cdot))$ has finite index in $(B,\cdot)$, it follows that $\operatorname{Soc}(B)$ is finite. Moreover, since $\operatorname{Soc}(B)$ is finite, every factor of the upper socle series has finite exponent (with a similar proof of \cite[Theorem~4.16]{BallesterEstebanFerraraPerezCTrombetti25-pacificjm-cent-nilp}). 

Now, $\operatorname{Soc}_n(B)$ is multiplicatively finitely generated as it has finite index in $B$, thus $\operatorname{Soc}_n(B)/\operatorname{Soc}_{n-1}(B)$ is finite and $\operatorname{Soc}_{n-1}(B)$ has finite index. Repeating the same argument, we obtain that every socle factor of $B$ is finite, and hence, $B$ is finite.
\end{proof}

\begin{cor}
Let $(B,+,\cdot)$ be a skew brace such that $(B,\cdot)$ is finitely generated and such that the hypercentre $\zeta_{\infty}(B)$ has finite index in $B$, then every subbrace of finite index contains an ideal of finite index.
\end{cor}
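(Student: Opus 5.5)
The plan is to reduce the corollary to Proposition~\ref{prop:fingenindex}. The point is that a finite-index hypercentre forces some finite term $\zeta_n(B)$ to have finite index, and then $\zeta_n(B)\le \Soc_n(B)$ supplies the hypothesis of that proposition.

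First I will check that $\zeta_\alpha(B)\le \Soc_\alpha(B)$ for every ordinal $\alpha$. This goes by transfinite induction. From the definitions, $\zeta(C)\le\Soc(C)$ for any skew brace $C$. Applying this to $C=B/\zeta_\alpha(B)$ handles successor steps, provided I know that $\zeta_\alpha(B)\le\Soc_\alpha(B)$ implies $\zeta_{\alpha+1}(B)\le \Soc_{\alpha+1}(B)$. That holds because an element central modulo $\zeta_\alpha(B)$ has trivial $\ast$ and additive commutators modulo $\zeta_\alpha(B)$, hence also modulo the larger ideal $\Soc_\alpha(B)$. Limit steps are just unions.

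Next I will show that $\zeta_n(B)$ has finite index for some finite $n$.
\begin{itemize}
\item Since $(B,\cdot)$ is finitely generated and $\zeta_\infty(B)$ has finite index in $(B,\cdot)$, Schreier's lemma gives that $(\zeta_\infty(B),\cdot)$ is finitely generated, say by $g_1,\dots,g_k$.
\item The hypercentre is the union of the ascending chain $\zeta_\alpha(B)$, so each $g_i$ lies in $\zeta_{\alpha_i}(B)$ for some ordinal $\alpha_i$.
\item Take $\beta=\max_i\alpha_i$. Then $\zeta_\beta(B)\supseteq\langle g_1,\dots,g_k\rangle_\cdot=\zeta_\infty(B)$, so the series stops at $\beta$.
\end{itemize}

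To get $\beta$ finite, I would argue as in the group case. If $\beta$ were a limit ordinal, the finite generation argument applied to $\zeta_\infty(B)=\bigcup_{\gamma<\beta}\zeta_\gamma(B)$ would put all generators already in some $\zeta_\gamma(B)$ with $\gamma<\beta$, so the series stabilises before $\beta$. Hence $\beta$ can be taken to be a successor. Repeating this, the first ordinal at which the union stabilises cannot be a limit ordinal; otherwise the finitely many generators would appear at an earlier stage.

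This only rules out stabilising exactly at a limit. It does not by itself give finiteness. I expect this to be the main obstacle: a finitely generated hypercentral object need not have finite central length in general. If the direct argument fails, I would instead work with $\zeta_\omega(B)=\bigcup_n\zeta_n(B)$. If $\zeta_\infty(B)=\zeta_\omega(B)$, the generators lie in some $\zeta_n(B)$ and we are done.

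Otherwise, I would try to avoid needing a finite $n$ at all. The plan is to rerun the proof of Proposition~\ref{prop:fingenindex} directly along the upper central series:
\begin{itemize}
\item Assume $S_B=0$.
\item The argument there shows that $\mathrm{Core}_{(B,\cdot)}(S)\cap\zeta(B)$ is an ideal, and hence that $\zeta(B)$ is finite.
\item By the analogue of \cite[Theorem~4.16]{BallesterEstebanFerraraPerezCTrombetti25-pacificjm-cent-nilp}, each factor $\zeta_{\alpha+1}(B)/\zeta_\alpha(B)$ then has finite exponent, and indeed is finite once the relevant terms are finitely generated.
\end{itemize}
From this, transfinite induction together with the finite generation of $\zeta_\infty(B)$ would show that $\zeta_\infty(B)$ is finite, and hence that $B$ is finite, which makes the conclusion trivial.

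Once a finite $n$ with $|B:\zeta_n(B)|<\infty$ is secured, $\Soc_n(B)\supseteq\zeta_n(B)$ also has finite index, and Proposition~\ref{prop:fingenindex} applies verbatim to give the corollary.
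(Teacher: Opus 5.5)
\textbf{There is a genuine gap.} Your reduction is the right one: $\zeta_\alpha(B)\le\Soc_\alpha(B)$, Schreier's lemma, and then Proposition~\ref{prop:fingenindex} once some \emph{finite} term $\zeta_n(B)$ has finite index. But the one step that needs an idea, namely that the upper central series reaches $\zeta_\infty(B)$ after finitely many steps, is left open. Your intuition that a finitely generated hypercentral object may have infinite central length points the wrong way.

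The paper's one-line proof uses the opposite fact: a finitely generated hypercentral skew brace is centrally nilpotent. Apply it to $H=\zeta_\infty(B)$. It is finitely generated by Schreier's lemma. It is hypercentral, because the ideals $\zeta_\alpha(B)$ of $B$ form an ascending central series of $H$. Hence $H$ is centrally nilpotent, and so s-noetherian by the argument of Corollary~\ref{cor:McLain-local-central}. The chain $\zeta_1(B)\le\zeta_2(B)\le\cdots$ of subbraces of $H$ therefore stabilises at a finite $n$, which forces $\zeta_n(B)=\zeta_\infty(B)$.

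There is also a more elementary fix that completes your own limit-ordinal observation:
\begin{itemize}
\item For fixed $a$, the set of $b$ with $ab=ba=a+b=b+a$ is a subgroup of $(B,\cdot)$. To check this, use $\lambda_{bc}=\lambda_b\lambda_c$ and the fact that each $\lambda_b$ is an additive automorphism.
\item Hence centrality modulo an ideal can be tested on finitely many multiplicative generators $g_1,\dots,g_k$ of $B$.
\item If $x\in\zeta_{\omega+1}(B)$, the finitely many elements $[x,g_j]_+$, $[x,g_j]_\cdot$, $x\ast g_j$, $g_j\ast x$ all lie in a single $\zeta_m(B)$. Therefore $x\in\zeta_{m+1}(B)$.
\item So $\zeta_\infty(B)=\zeta_\omega(B)$, and your finitely many generators of $\zeta_\infty(B)$ already lie in one finite term.
\end{itemize}

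Your fallback of rerunning Proposition~\ref{prop:fingenindex} along the transfinite upper central series does not close the gap either. The Baer-type argument behind the finite-exponent claim passes an exponent bound from $\zeta_{\alpha}/\zeta_{\alpha-1}$ up to $\zeta_{\alpha+1}/\zeta_\alpha$. It has nothing to start from at a factor $\zeta_{\lambda+1}(B)/\zeta_\lambda(B)$ with $\lambda$ a limit ordinal. Moreover, a factor can only be shown finite when the term above it has finite index, since only then is it finitely generated. So the induction must run downwards, as in the proposition. Making that downward induction terminate requires knowing you never pass a limit stage, which is exactly the missing finiteness of the central length.
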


\begin{proof}
The same argument applies as a finitely generated hypercentral skew brace is centrally nilpotent.
\end{proof}

\begin{rem}
Recall that the structure skew brace $G(X,r)$ of a finite solution of the YBE $(X,r)$ (i.e. $X$ is a finite set) satisfies that $G(X,r)/\Soc(G(X,r))$ is finite, as $(G(X,r),\cdot)/\Soc(G(X,r))$ is a quotient of the permutation skew brace $\mathcal{G}(X,r)$, which is finite. Hence, Question~\ref{quest:index} holds for~$G(X,r)$.
\end{rem}

In \cite{VanAntwerpen2026} the authors show that Question~\ref{quest:index} holds for two-sided skew braces. In our approach we use a similar argument of Lemma~\ref{lem:clausura-ideal-twosided}. This result allows us to extend McLain's result about minimal conditions within the class of two-sided skew braces. 


\begin{lem}
\label{lem:core_two-sided}
Let $B$ be a two-sided skew brace and let $S$ be a subbrace of~$B$. Then,
\[  S_B = \bigcap\{b+\lambda_a({}^c S)-b\mid a,b,c\in B\}\]
\end{lem}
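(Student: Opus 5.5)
Call $T := \bigcap\{b+\lambda_a({}^c S)-b\mid a,b,c\in B\}$. The plan is to prove the two inclusions $S_B\subseteq T$ and $T\subseteq S_B$. The second one splits into showing that $T\subseteq S$ and that $T$ is an ideal. The argument mirrors Lemma~\ref{lem:clausura-ideal-twosided}, with intersections in place of sums. Let $\mathcal{R}=\{b+\lambda_a({}^c S)-b\mid a,b,c\in B\}$ and let $\Gamma$ be the group of permutations of $B$ generated by additive conjugations, multiplicative conjugations and the maps $\lambda_x$, $x\in B$.

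\emph{Every member of $\mathcal{R}$ is a subbrace.} Since $B$ is two-sided, multiplicative conjugation is a skew brace automorphism (\cite[Lemma~4.1]{Nasybullov19}). Each $\lambda_a$ is an additive automorphism. Additive conjugation is an additive automorphism and also preserves products up to relabelling, via $b+xy-b=(b+x)\lambda_{?}$; if this causes trouble, all we really need is that each member of $\mathcal{R}$ is the image of $S$ under a bijection of $B$. With $a=b=c=0$ we get $S\in\mathcal{R}$, hence $T\subseteq S$.

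\emph{$S_B\subseteq T$.} The ideal $S_B$ is invariant under additive conjugation, under multiplicative conjugation (since $(S_B,\cdot)\unlhd(B,\cdot)$) and under every $\lambda_a$. So $S_B=b+\lambda_a({}^cS_B)-b\subseteq b+\lambda_a({}^cS)-b$ for all $a,b,c$, which gives $S_B\subseteq T$.

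\emph{$T$ is an ideal.} The computation in the proof of Lemma~\ref{lem:clausura-ideal-twosided} shows that for each generator $\alpha$ of $\Gamma$ and each $S'\in\mathcal{R}$ we have $\alpha(S')\in\mathcal{R}$. Since these generators are bijections, each $\alpha$ permutes $\mathcal{R}$, because $\alpha^{-1}$ is again a generator of the same kind. Hence $\alpha(T)=\bigcap_{S'\in\mathcal{R}}\alpha(S')=T$. So $(T,+)$ is a normal subgroup of $(B,+)$, $(T,\cdot)$ is normal in $(B,\cdot)$, and $T$ is $\lambda$-invariant. We must still check that $T$ is a subbrace, which holds if every member of $\mathcal{R}$ is a subbrace. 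By the earlier remark, ${}^cS$ is a subbrace and so is $\lambda_a({}^cS)$ when $\lambda_a$ is an automorphism. The delicate part is additive conjugation.

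\emph{Main obstacle.} The hard step is verifying that $T$ is multiplicatively closed, since $\lambda_a$ and additive conjugation are not multiplicative automorphisms. The approach I would try first is to avoid this and work directly with the ideal characterisation. By $\Gamma$-invariance, $(T,+)\unlhd(B,+)$ and $\lambda_a(T)=T$. The identity $xy=x+\lambda_x(y)$ then shows that $T$ is closed under products: for $x,y\in T$, $\lambda_x(y)\in T$, so $xy\in T$. Inverses follow similarly from $x^{-1}=\lambda_{x^{-1}}(-x)$ and $\lambda_{x^{-1}}=\lambda_x^{-1}$. Thus $T$ is a subbrace, and $\lambda$-invariance together with normality in both groups makes it an ideal. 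Combined with $T\subseteq S$, maximality of $S_B$ gives $T\subseteq S_B$, and therefore $S_B=T$.
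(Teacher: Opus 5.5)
Your proposal is correct and is essentially the paper's argument: the paper only says the proof is analogous to Lemma~\ref{lem:clausura-ideal-twosided}. That is, additive conjugations, multiplicative conjugations and the maps $\lambda_x$ permute $\mathcal{R}$, so $T=\bigcap\mathcal{R}$ is invariant under them, contains $S_B$ and lies in $S$. Your closing step fills in a detail the paper leaves implicit: $T$ is an additive subgroup because all three kinds of maps are additive automorphisms, and then $xy=x+\lambda_x(y)$ and $x^{-1}=\lambda_{x^{-1}}(-x)$ make $T$ a subbrace. This is the right way to do it, because your opening heading ``every member of $\mathcal{R}$ is a subbrace'' is false in general, since $\lambda_a(S)$ need not be multiplicatively closed. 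You never actually use that claim, so the argument stands.
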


\begin{proof}
The proof is analogous to the proof of Lemma~\ref{lem:clausura-ideal-twosided}.
\end{proof}

\begin{rem}
In two-sided skew braces, equations~\eqref{eq:ast_dist_sum} and~\eqref{eq:ast_dist_prod} also hold on the right: for every $a,b,c\in B$,
\begin{align}
(a+b)\ast c & = -b + a\ast c + b + b\ast c, \label{eq:astright_dist_sum} \\
a\ast (bc) & = a\ast c + a\ast b + (a\ast b)\ast c. \label{eq:astright_dist_prod}
\end{align}
Since multiplicative conjugations are also automorphisms of the additive group, for a given $x\in B$ and a subbrace $H\leq B$, it turns out that 
$$C_H(x)=\{b\in H\mid [b,x]_+=[b,x]_\cdot = b\ast x = x\ast b = 0\}$$
is a subbrace of~$B$.
\end{rem}

We are in a position to prove McLain's result for two-sided skew braces.

\begin{theo}
\label{theo:minid}
Let $B$ be a locally multipermutational two-sided skew brace. If $B$ is i-artinian, then it is s-artinian. Moreover, $B$ is a hypercentral and soluble skew brace.
\end{theo}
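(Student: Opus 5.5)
We follow McLain's strategy for locally nilpotent groups with the minimal condition on normal subgroups. The plan is to show first that $B$ is hypercentral and soluble, and then to deduce the minimal condition on subbraces from it.

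\textbf{Step 1: every chief factor is central.} Let $I/J$ be a chief factor of $B$; by i-artinianity such factors exist in every nontrivial quotient. Working in $B/J$, assume $J=0$, so $I$ is a minimal ideal. Pick $0\neq u\in I$ and $b\in B$. The subbrace $T=\langle u,b\rangle$ is finitely generated, hence multipermutational. We want $u\in\zeta(B)$.

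\emph{Approach.} Suppose $u\ast b$, $b\ast u$, $[u,b]_+$ or $[u,b]_\cdot$ is nonzero. By the Remark before the theorem, centralizers $C_I(x)$ are subbraces. Because $B$ is two-sided, multiplicative conjugations and the maps $\lambda_x$ are additive automorphisms. So Lemma~\ref{lem:clausura-ideal-twosided} presents $u^B=I$ as generated by conjugate images of $\langle u\rangle_+$. Using $T^{(n)}=0$, iterated star products of $u$ by $b$ eventually vanish, and so do the additive and multiplicative commutators, since $T$ is of nilpotent type.

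To turn this into $I\ast B=B\ast I=0$, I would use the analogue of the argument cited in Theorem~\ref{teo:McLain-local-multiperm}, namely \cite[Theorem~4.6]{BallesterEstebanFerraraPerezCTrombetti25-pacificjm-cent-nilp}. Consider the descending chain of ideals $I\geq I\ast B\geq (I\ast B)\ast B\geq\cdots$ and the analogous chains for the left product and for the commutators. Each is a chain of ideals of $B$ contained in $I$, so by minimality each term is $0$ or $I$. If some term equals $I$, then every element of $I$ is a sum of long products. Combining this with local multipermutability should yield a contradiction, but only after reducing to a finitely generated situation.

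\textbf{Main obstacle.} Minimal ideals need not be finitely generated as ideals, so the reduction to a finitely generated subbrace is the hard step. I would try first the approach above: Lemma~\ref{lem:clausura-ideal-twosided} writes $I$ as the additive span of $\lambda$-conjugates of $\langle u\rangle_+$. So $u$ itself is a finite sum of images of $u\ast b$-type elements (if $I\ast B=I$), involving only finitely many elements of $B$. This should produce a finitely generated subbrace in which the right series does not terminate, contradicting local multipermutability. Granting this, $I\leq\zeta(B)$.

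\textbf{Step 2: hypercentrality and solubility.} Every nonzero quotient $B/J$ has a minimal ideal, which is central by Step 1. Transfinite induction then gives $\zeta_\alpha(B)=B$ for some ordinal $\alpha$, so $B$ is hypercentral. For solubility, i-artinianity of abelian sections should force the upper central series to stabilise after finitely many steps modulo a bounded piece. Failing that, use McLain's route: each $\zeta(B/\zeta_\alpha)$ is an abelian skew brace with the minimal condition on ideals, and ideals of abelian braces contained in the centre are exactly the subgroups. So each such factor is an artinian abelian group, that is, a direct sum of finitely many Prüfer groups and a finite group. One then shows, as for groups, that the hypercentral series has finite length, giving solubility.

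\textbf{Step 3: s-artinian.} Let $S_1\geq S_2\geq\cdots$ be a descending chain of subbraces. Intersect with the finitely many terms of a finite central series refinement whose factors are artinian abelian groups, where subbraces are ideals. Then the argument of Corollary~\ref{cor:McLain-local-central}, dualized to descending chains, gives stabilisation, using the cores from Lemma~\ref{lem:core_two-sided} to pass between subbraces and ideals where needed.
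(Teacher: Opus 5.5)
\textbf{The main gap is in Steps~2 and~3.} Both rest on the claim that the upper central series of $B$ has finite length; Step~3 intersects your chain with a finite central series. That claim is false under the hypotheses of the theorem.

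Take the locally dihedral $2$-group $D=C_{2^\infty}\rtimes\langle t\rangle$, with $t$ inverting $C_{2^\infty}$, viewed as a trivial skew brace ($ab=a+b$). It is two-sided and locally multipermutational, since its finitely generated subgroups are finite $2$-groups and for trivial skew braces $\Soc_n=\zeta_n$ is the $n$-th centre. It is also i-artinian, since it even satisfies the minimal condition on subgroups. Yet $\zeta_n(D)=\Omega_n(C_{2^\infty})$, so $\zeta_\omega(D)=C_{2^\infty}$ and $\zeta_{\omega+1}(D)=D$. Hence $D$ has no finite central series. Knowing that every upper central factor is an artinian abelian group therefore gives neither a finite series nor solubility.

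\textbf{The missing idea is McLain's finite-residual argument.} This is where two-sidedness is really used, and your proposal never touches the finite-index/core question that the paper identifies as the crux.
\begin{enumerate}
\item Let $F$ be the least ideal of finite index; it exists by i-artinianity.
\item In two-sided skew braces the core of a finite-index subbrace again has finite index (cf.\ Lemma~\ref{lem:core_two-sided}). So $F$ lies in every subbrace of finite index.
\item One shows that $Z=\zeta(F)_B\neq 0$ is periodic with only finitely many elements of each additive order.
\item Consequently every $x$ with $x+Z\in\zeta(F/Z)_{B/Z}$ has only finitely many additive, multiplicative and $\lambda$-conjugates under $F$. So $C_F(x)$ is a subbrace of finite index, and hence contains $F$.
\item This forces $\zeta(F/Z)_{B/Z}=0$, and then $F=Z$, so $F$ is abelian.
\end{enumerate}
Now $F$ is abelian and artinian, and $B/F$ is finite and hypercentral, hence centrally nilpotent. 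This gives solubility at once. Your modular-law argument, applied to the two-term series $0\le F\le B$, then yields s-artinianity.

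\textbf{Step~1 is only assumed, and your sketch would not reach it.} Nilpotent type concerns $(T,+)$ only, so it says nothing about multiplicative commutators. For instance, the brace on $\mathbb{Z}/6$ with multiplicative group $\mathrm{Sym}_3$ has multipermutational level~$2$. Likewise, the socle argument behind Theorem~\ref{teo:McLain-local-multiperm} only gives $I\le\Soc(B)$, not $I\le\zeta(B)$. The paper obtains centrality in two moves. First, two-sided skew braces of nilpotent type are right nilpotent if and only if they are centrally nilpotent, so $B$ is locally centrally nilpotent. Second, minimal ideals of locally centrally nilpotent skew braces lie in the centre.
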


\begin{proof}
Recall that multipermutational skew braces are right nilpotent of nilpotent type. In two-sided skew braces of nilpotent type, right nilpotency is equivalent to central nilpotency (see \cite[Theorem~4.26]{Trappeniers2023two}), then we can assume that $B$ is locally centrally nilpotent.

Besides, in locally centrally nilpotent skew braces, minimal ideals are always contained in the center (\cite[Theorem~4.5]{BallesterEstebanFerraraPerezCTrombetti25-pacificjm-cent-nilp}). Therefore, $B$ is hypercentral as it is i-artinian. Moreover, it is possible to find a minimum ideal of finite index~$F$, and according to \cite[Corollary~3.5]{VanAntwerpen2026}, we can say that $F$ is contained in every subbrace of finite index.

Let $Z=\zeta(F)_B$, which is an abelian ideal of~$B$. Note that $Z$ cannot be~$0$, because the set $\{I\lhd B\mid I\subseteq F\}$ has a minimal element $M$, which is also a minimal ideal of~$B$, so $M\leq \zeta(B)\cap F\leq \zeta(F)$. Moreover, $Z$ can be 
seen isomorphic to an abelian group. Take $T$ its torsion group. Recall that for two-sided skew braces, characteristic subgroups of the additive group of an ideal are also ideals. Thus, $T$ is an ideal of~$B$ such that $Z/T$ is isomorphic to a torsion-free abelian group. 
Since $B$ is i-artinian, take $I/T$ a minimal ideal of $B/T$ contained in~$Z/T$. Once again, $I/T\leq \zeta(B/T)$ by \cite[Theorem~4.5]{BallesterEstebanFerraraPerezCTrombetti25-pacificjm-cent-nilp}. Thus, $I/T$ is torsion free and all additive subgroups of $I/T$ are ideals of $B/T$. This contradicts the minimality of~$I/T$.

Therefore, $Z$ is isomorphic to a periodic abelian group, and $\pi(Z)$ must be finite, as $B$ is i-artinian, and characteristic subgroups of $Z$ are ideals of~$B$. We claim that for each additive order $n$ there are only a finite number of elements of $Z$ with order~$n$. Indeed, it suffices to prove it for each prime $p\in \pi(Z)$. Let $p\in\pi(Z)$ and consider $P=Z[p]$, the subgroup generated by all elements of order~$p$. Thus, $P$ is an ideal of~$B$. If $(P,+)$ is an infinite elementary abelian $p$-group, write $(P,+)=\langle a\rangle \oplus K$ for some $a\in P$. Since $K\leq \zeta(F)$ and $F$ has finite index in $B$, the set $\{b+\lambda_a({}^c K)-b\mid a,b,c\in B\}$ is finite and each of its elements has also finite index in $P$. Therefore, $K_B=P_1 \leq K$ has still finite index in $P$, and therefore, it is an infinite elementary abelian $p$-group. The same process could be iteratively repeated, so that it yields an infinite descending series of ideals of~$B$, which is a contradiction. Thus, $Z$ is an abelian skew brace isomorphic to a periodic abelian group with finite rank, and $\pi(Z)$ is also finite. Hence, $Z$ is s-artinian.

Now, let $Z'$ be the ideal such that $Z'/Z=(\zeta(F/Z))_{B/Z}$. Clearly $Z'/Z$ is again a periodic abelian skew brace of finite rank with $\pi(Z'/Z)$ finite and $Z<Z'$. Recursively, this yields an ascending series of ideals in $B$, $\{Z^{(\alpha)}\}_{\alpha\in\mathcal{O}}$, that is central in~$F$. 

Let $x\in Z'$ and $b\in F$. Since $Z'/Z = (\zeta(F/Z))_{B/Z}$, it follows that 
\[ -b+x+b=x+u, \quad \lambda_b(x)=x+u', \quad  b^{-1}xb=x+u'',\]
for some $u,u', u''\in Z$. Moreover, each of these skew brace conjugates of $x$ in $F$ preserve the additive order of $x$, so that the additive order of $u$, $u'$ and $u''$ must be a divisor of the order of~$x$. But in $Z$ there are only a finite amount of elements of a fixed order (and also its divisors), and therefore, $x$ has only a finite number of these skew brace conjugates. Thus, $C_F(x)$ is a subbrace such that both its additive and multiplicative subgroup have finite index in $F$ and so also in~$B$, that is $C_F(x)$ has finite index as a subbrace in~$B$. Thus, $F=C_F(x)$, and so $x\in \zeta(F)$. Hence, $Z'\leq \zeta(F)$, and since $Z'$ is an ideal contained in $\zeta(F)$, $Z=Z'$, which is only possible if $Z=Z'=\zeta(F)=F$.

Finally, we conclude that $F$ is an abelian skew brace which is s-artinian, and $B/F$ is a finite centrally nilpotent skew brace. Hence, $B$ is s-artinian and a soluble skew brace.
\end{proof}

\section{Finiteness conditions on solutions of the YBE}
\label{sec:finite-cond-sol}

The purpose of this section is to expand the theory of finiteness conditions of skew braces to the setting of solutions of the YBE. We shall delve into interrelations between finiteness conditions of solutions and its associated skew braces.

We start by introducing finite generation for solutions.

\begin{defi}
Let $(X,r)$ be a solution and $Y\subseteq X$. Consider 
$$ \bar{Y}=\bigcap\{S\subseteq X \mid Y\subseteq S \ \text{and}\  (S,r)\leq (X,r)\}.$$ 
We say that $(\bar Y, r)$ is the {\it solution generated by}~$Y$. If $Y$ is finite and $(X,r) = (\bar Y, r)$, we say that $(X,r)$ is \emph{finitely generated}.
\end{defi}

The following result shows that finite generation is inherited by the structure and permutation skew brace of a solution.
\begin{prop}
\label{prop:fin-gen_sol->fin-gen_skew}
If $(X,r)$ is a solution such that $X=\bar{Y}$, then $G(X,r)$ is generated by $Y$ as a skew brace. Consequently, if $(X,r)$ is a finitely generated solution, then $G(X,r)$ is a finitely generated skew brace, and so $\mathcal{G}(X,r)$ is.
\end{prop}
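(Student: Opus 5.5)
The plan is to compare the subbrace $T:=\langle \iota(Y)\rangle \leq G(X,r)$ generated by the image of $Y$ with the preimage
\[ Z:=\{x\in X \mid \iota(x)\in T\}. \]
Since $G(X,r)$ is generated as a group, and hence as a skew brace, by $\iota(X)$, it suffices to show that $\iota(X)\subseteq T$. Equivalently, we must show $Z=X$. Because $X=\bar Y$ is the smallest subsolution containing $Y$ and clearly $Y\subseteq Z$, it is enough to prove that $(Z,r)$ is a subsolution, that is, $r(Z\times Z)=Z\times Z$.

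For the inclusion $r(Z\times Z)\subseteq Z\times Z$, take $x,y\in Z$. By the commuting diagram~\eqref{eq:commuting_diagram_X-G(X,r)},
\[ \iota(\lambda_x(y))=\lambda^{G}_{\iota(x)}(\iota(y)), \qquad \iota(\rho_y(x))=\rho^{G}_{\iota(y)}(\iota(x)). \]
Here $\lambda^G_a(b)=-a+ab$ and $\rho^G_b(a)=(a^{-1}+b)^{-1}b$ are both expressed through the two group operations of $G(X,r)$. Since $T$ is a subbrace containing $\iota(x)$ and $\iota(y)$, both values lie in $T$. Hence $\lambda_x(y),\rho_y(x)\in Z$.

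The reverse inclusion is the step that needs care, and it is where non-degeneracy enters. By~\cite{JedlickaPilitowska26}, $r$ is bijective. I would describe $r^{-1}$ concretely: writing $r^{-1}(u,v)=(\hat\lambda_u(v),\hat\rho_v(u))$, I claim that $\hat\lambda$ and $\hat\rho$ are also realised in $G(X,r)$ by skew brace operations. Indeed, $r_{G_X}$ is bijective on $G(X,r)$, with inverse given by $(a,b)\mapsto (ab\,\rho^G_{a}... )$; more simply, $r_{G}(c,d)=(\lambda^G_c(d),\rho^G_d(c))$ satisfies $cd=\lambda^G_c(d)\rho^G_d(c)$. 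Given $(u,v)$ with $u,v\in Z$, let $(x,y)=r^{-1}(u,v)$. Then $\iota(x)\iota(y)=\iota(u)\iota(v)\in T$. Moreover, $\iota(x)=\lambda^{G}_{\iota(x)}{}^{-1}(\iota(u))$ needs to be obtained. Using the identity $\lambda_c(d)=u$ with $cd=uv$, we get $c\,\lambda^G_c{}^{-1}$... The cleanest route is to set $w:=\iota(u)\iota(v)\in T$ and then solve
\[ \lambda^G_c(c^{-1}w)=u, \quad\text{i.e.}\quad -c+w=u, \]
so that $c=w-u=\iota(u)\iota(v)-\iota(u)\in T$. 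Hence $\iota(x)=c\in T$ and $\iota(y)=c^{-1}w\in T$, which gives $x,y\in Z$. Checking that this formula for $c$ is correct (it follows from $\lambda_c(d)=-c+cd$) is the only genuine computation.

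Consequently $Z$ is a subsolution containing $Y$, so $Z=X$ and $G(X,r)=\langle \iota(Y)\rangle$. If $Y$ is finite, then $G(X,r)$ is a finitely generated skew brace. Finally, $\mathcal{G}(X,r)\cong G(X,r)/\ker\phi_{G_X}$ is an epimorphic image of $G(X,r)$, so it is generated by the images of $Y$ and is finitely generated as well.
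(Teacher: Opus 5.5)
Your proof is correct and follows the paper's strategy: you pull the subbrace generated by $\iota(Y)$ back to $Z\subseteq X$ and show that $Z$ is a subsolution containing $Y$. The only variation is in the reverse inclusion. The paper proves that $\lambda_x|_Z$ and $\rho_x|_Z$ are onto $Z$ using $(\lambda^G_{\iota(x)})^{-1}=\lambda^G_{\iota(x)^{-1}}$, so non-degeneracy of $r|_Z$ is checked directly. You instead invert $r$ explicitly via $\iota(x)=\iota(u)\iota(v)-\iota(u)$ and $\iota(y)=\iota(x)^{-1}\iota(u)\iota(v)$, which yields $r(Z\times Z)=Z\times Z$ and then relies on the paper's stated equivalence of this condition with being a subsolution.

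Do remove the unfinished exploratory fragments in that paragraph. In particular, the claim $\iota(x)=(\lambda^G_{\iota(x)})^{-1}(\iota(u))$ is wrong as written and should read $\iota(y)=(\lambda^G_{\iota(x)})^{-1}(\iota(u))$.
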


\begin{proof}
Recall that $G(X,r)=\langle X \mid xy=\lambda_x(y)\rho_y(x), \ \forall\, x,y \in X\rangle$. The proof assumes that $(X,r)$ is injective, i.e. $\iota\colon x\in X \mapsto x\in G(X,r)$ is injective,  as in general, $G(X,r)\simeq G(\operatorname{Inj}(X,r))$. Call $\lambda$ and $\lambda^G$, $\rho$ and $\rho^G$, respectively, the $\lambda$ and $\rho$ maps in $(X,r)$ and $G(X,r)$.
 
Let $Y$ be a subset of $X$ such that $\overline{Y}=X$, and consider $H=\langle Y\rangle $ the subbrace generated by $\iota(Y)$ in~$G(X,r)$. It suffices to show that $Z:=\{x\in X\mid \iota(x) \in H\}$ provides a subsolution $(Z,r) \leq (X,r)$, as $Z$ clearly contains~$Y$, and therefore, it follows that $Z = X$ and so $H = G(X,r)$. 

Let $x, y \in Z$. Since $\iota(x),\iota(y) \in H$, by~\eqref{eq:commuting_diagram_X-G(X,r)}, it holds that
\[ \iota(\lambda_x(y)) = \lambda^G_{\iota(x)}(\iota(y)) \in H \]
Hence, $\lambda_x(Z) \leq Z$. On the other hand, let $x, z\in Z$ and $y \in X$ such that $\lambda_x(y) = z$. Thus, $y = \lambda_x^{-1}(z)$. Since $r$ is bijective, once again by~\eqref{eq:commuting_diagram_X-G(X,r)}, it follows that
\[ \iota(y) = \iota(\lambda_x^{-1}(z)) = (\lambda^G)^{-1}_{\iota(x)}(\iota(z)) = \lambda^G_{\iota(x)^{-1}}(\iota(z)) \in H\] 
Therefore, $\lambda_x(Z) = Z$ for every $x\in Z$. Analogously, $\rho_x(Z) = Z$ for every $x\in Z$. Hence, $r(Z\times Z) = Z\times Z$ and $(Z,r)\leq (X,r)$ as we wanted to prove.
\end{proof}

We do not know if the converse of this proposition holds for structure skew braces of solutions, but certainly, it does not hold for permutation skew braces of solutions.

\begin{ex}
Let $X=\mathbb{Z}$ and take the map $r \colon \mathbb Z \times \mathbb Z \rightarrow \mathbb Z \times \mathbb Z$, given by $r(n,m)=(-m,-n)$. Clearly, $(X,r)$ is a Lyubashenko solution. Observe that in $(X,r)$, for every finite subset $X\subseteq \mathbb{Z}$, $(\bar{X}, r)$ is given by $\bar X = X\cup \{-x\mid x\in X\}$. Hence, finitely generated subsolutions are finite, and therefore, $(X,r)$ is not finitely generated. Nevertheless, $\lambda_x = -\mathrm{id}_{\mathbb{Z}} = \rho_x$ for every $x\in \mathbb{Z}$. Hence, $\mathcal{G}(X,r) = \langle (-\mathrm{id}_{\mathbb Z}, - \mathrm{id}_{\mathbb{Z}}) \rangle$ is finitely generated.
\end{ex}

\begin{rem}
Recall that if $B$ is a skew brace, then $r_B(a,b) = (\lambda_a(b), \rho_b(a))$ is provided by the $\lambda$ and $\rho$-actions in~$B$. Thus, the subsolution generated by a subset $S\subseteq B$ is always contained in the subbrace generated by $S$. Therefore, if $(B,r_B)$ is finitely generated, so is $B$ as a skew brace.

The opposite does not hold, not even in the multipermutational case. For instance, if $B$ is an infinite abelian skew brace isomorphic to a finitely generated abelian group, then $(B,r_B)$ is a twist solution.  Thus, every subset of $B$ is a subsolution, and therefore, it is not finitely generated.
\end{rem}

\bigskip

We turn now to introduce locality conditions for solutions of the YBE.

\begin{defi}
Let $(X,r)$ be a solution and let $\mathfrak{Y}$ be a class of solutions of the YBE. A solution is called \textit{locally} $\mathfrak{Y}$, if every finite subset is contained in a subsolution in $\mathfrak{Y}$.
\end{defi}

\begin{rem}
If $\mathfrak{Y}$ is a class of solutions which is closed with respect to subsolutions (e.g. finite solutions or (hyper)multipermutational solutions), then being locally $\mathfrak{Y}$ means that all its finitely generated subsolutions are in $\mathfrak{Y}$. For instance, $(X,r)$ is \emph{locally finite} if all its finitely generated subsolutions are finite.
\end{rem}

\begin{prop}
\label{prop:Blocal-rBlocal}
Let $\mathfrak{X}$ and $\mathfrak{Y}$ be a class of skew braces, and a class of solutions, respectively. Assume that for every skew brace $B$, $B\in \mathfrak{X}$ if and only if $(B,r_B)\in\mathfrak{Y}$. It follows that if $B$ is locally $\mathfrak{X}$, then $(B,r_B)$ is locally $\mathfrak{Y}$.
\end{prop}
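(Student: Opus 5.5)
We plan to unwind the two definitions of locality and use the fact that subbraces give subsolutions. Let $B$ be locally $\mathfrak{X}$, meaning every finitely generated subbrace of $B$ lies in $\mathfrak{X}$; more generally, we only need that every finite subset of $B$ is contained in a subbrace lying in $\mathfrak{X}$. Fix a finite subset $F\subseteq B$. We must exhibit a subsolution of $(B,r_B)$ that contains $F$ and belongs to $\mathfrak{Y}$.

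First, take $S=\langle F\rangle$, the subbrace generated by $F$. By hypothesis, $S\in\mathfrak{X}$. Next, we check that $S$, as a subset of $B$, is a subsolution of $(B,r_B)$. For $a,b\in S$ we have $\lambda_a(b)=-a+ab\in S$ and $\rho_b(a)=(a^{-1}+b)^{-1}b\in S$, because $S$ is closed under both operations and under inverses. Hence $r_B(S\times S)\subseteq S\times S$. For the reverse inclusion, we use that $r_B$ restricted to $S\times S$ coincides with $r_S$, the solution associated with the skew brace $S$. This holds because $\lambda$ and $\rho$ are computed from the operations of $S$, which are the restrictions of those of $B$. Since $r_S$ is bijective on $S\times S$, being a non-degenerate solution, we get $r_B(S\times S)=S\times S$. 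So $(S,r_B|_{S\times S})=(S,r_S)$ is a subsolution of $(B,r_B)$.

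Finally, we apply the assumed equivalence to the skew brace $S$. Since $S\in\mathfrak{X}$, we obtain $(S,r_S)\in\mathfrak{Y}$. Thus $F$ is contained in the subsolution $(S,r_S)\in\mathfrak{Y}$. Because $F$ was an arbitrary finite subset, $(B,r_B)$ is locally $\mathfrak{Y}$ in the sense of the definition above. We do not need $\mathfrak{Y}$ to be closed under subsolutions, since the definition only asks for the existence of some subsolution in $\mathfrak{Y}$ containing $F$.

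The only step requiring care is the identification $r_B|_{S\times S}=r_S$ together with the surjectivity onto $S\times S$. If one prefers to avoid invoking bijectivity of $r_S$, one can argue directly: $r_B^{-1}$ is given by the analogous brace formulas, which again only involve the operations of $S$, so $r_B^{-1}$ also maps $S\times S$ into itself. Apart from this, the proof is routine.
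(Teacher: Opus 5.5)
Your proposal is correct and follows the same route as the paper: cover the finite subset $F$ by a subbrace $H\in\mathfrak{X}$ (you take $H=\langle F\rangle$), then note that $(H,r_H)$ is a subsolution of $(B,r_B)$ lying in $\mathfrak{Y}$. The only difference is that you explicitly check $r_B|_{H\times H}=r_H$ and $r_B(H\times H)=H\times H$, a step the paper's one-line proof leaves implicit.
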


\begin{proof}
Assume that $F$ is a finite subset of $B$, such that it is contained in a subbrace $H\in \mathfrak{X}$. Then, $(H,r_H)\in \mathfrak{Y}$, and the proposition holds.
\end{proof}

Note that the converse is false, as every abelian skew brace $B$, which is isomorphic to a non periodic abelian group, satisfies that $(B,r_B)$ is a twist solution, and therefore, it is locally finite. Nevertheless, the following equivalence holds in the case of solutions and associated permutation skew braces.

\begin{theo}
\label{teo:Xlocall<->G(X,r)locall}
Let $\mathfrak{X}$ be a class of skew braces closed with respect to subbraces and images, and let $\mathfrak{Y}$ be a class of solutions, such that for any solution $(X,r)\in\mathfrak{Y}$ if and only if $\mathcal{G}(X,r)\in\mathfrak{X}$. Then, $(X,r)$ is locally $\mathfrak{Y}$ if and only if $\mathcal{G}(X,r)$ is locally $\mathfrak{X}$.
\end{theo}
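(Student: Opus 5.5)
We prove the two implications separately; in both, the link between a finite set of data on one side and a finitely generated object on the other is Proposition~\ref{prop:subsol-subbrace} together with the map $f\colon X\to\mathcal{G}(X,r)$, $x\mapsto(\lambda_x,\rho_x^{-1})$, which by Remark~\ref{nota:lambda_actions} is a homomorphism of solutions. We read "locally $\mathfrak{X}$" for skew braces as "every finite subset lies in a subbrace in $\mathfrak{X}$"; since $\mathfrak{X}$ is subbrace-closed, this is the same as every finitely generated subbrace lying in $\mathfrak{X}$.

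Assume first that $(X,r)$ is locally $\mathfrak{Y}$, and let $F\subseteq\mathcal{G}(X,r)$ be finite. Each element of $F$ is a word in finitely many generators $(\lambda_x,\rho_x^{-1})$, so the multiplicative subgroup generated by $F$ lies in the multiplicative subgroup generated by $\{(\lambda_x,\rho_x^{-1})\mid x\in Y_0\}$ for some finite $Y_0\subseteq X$. By hypothesis, $Y_0$ is contained in a subsolution $(Y,r)\in\mathfrak{Y}$, and hence $\mathcal{G}(Y,r_Y)\in\mathfrak{X}$. Now let $H$ be the subbrace of $\mathcal{G}(X,r)$ generated by $f(Y)$. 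Since $Y$ is a subsolution, Remark~\ref{nota:lambda_actions} shows that $f(Y)$ is closed under the $\lambda$- and $\rho$-actions. Hence $H$ has multiplicative group $\langle (\lambda_y,\rho_y^{-1})\mid y\in Y\rangle$, regarded as permutations of $X$. Restricting these permutations to $Y$ gives an epimorphism from $H$ onto $\mathcal{G}(Y,r_Y)$; this is the section in Proposition~\ref{prop:subsol-subbrace}(2). The difficulty is that the section goes the wrong way: we know the image $\mathcal{G}(Y,r_Y)$ lies in $\mathfrak{X}$, but we need $H$ itself in $\mathfrak{X}$, and $\mathfrak{X}$ is only assumed closed under subbraces and images, not extensions. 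This is the main obstacle.

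To get around it, we first try to enlarge $Y$ so that the restriction map is injective on $H$. Since $\lambda_y$ and $\rho_y$ are permutations of all of $X$, the natural choice is $Y=X$ when $X$ itself is in $\mathfrak{Y}$, but that is circular. The alternative is to reinterpret $H$ directly as $\mathcal{G}$ of a solution in $\mathfrak{Y}$: consider the solution $(X,r)$ restricted to the finitely generated data. Concretely, one could look at the subsolution of the solution $(\mathcal{G}(X,r),r_{\mathcal{G}})$ generated by $f(Y)$. Alternatively, one could use the universal property of Theorem~4 of~\cite{LuYanZhu00} applied to $f|_Y$, which yields an epimorphism $G(Y,r_Y)\to H$ that factors through $\mathcal{G}(Y,r_Y)$ if the kernel of $\phi_{G_Y}$ maps to the identity. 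That kernel condition is exactly what needs checking. Its elements act trivially on $Y$, but we need them to act trivially on all of $X$. If this fails, the fallback is to show that $H$ is isomorphic to $\mathcal{G}$ of the subsolution of $(X\sqcup\cdots)$ obtained by taking $Y$ to be closed under all $\lambda_y,\rho_y$ orbits; that is, to choose $Y$ large enough that restriction is faithful, using the hypothesis on $\mathfrak{Y}$ once more.

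For the converse, assume $\mathcal{G}(X,r)$ is locally $\mathfrak{X}$ and let $Y_0\subseteq X$ be finite. Let $Y=\overline{Y_0}$ be the subsolution it generates. By Proposition~\ref{prop:fin-gen_sol->fin-gen_skew}, $\mathcal{G}(Y,r_Y)$ is finitely generated by the images of $Y_0$. By Proposition~\ref{prop:subsol-subbrace}(2), it is an image of the subbrace $H$ of $\mathcal{G}(X,r)$ generated by $f(Y_0)$. The subbrace $H$ is finitely generated, so it lies in $\mathfrak{X}$. Closure of $\mathfrak{X}$ under images then gives $\mathcal{G}(Y,r_Y)\in\mathfrak{X}$, and therefore $(Y,r)\in\mathfrak{Y}$. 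This direction is routine. So the real work lies in the first direction, namely identifying $H$ with a permutation skew brace of a subsolution.
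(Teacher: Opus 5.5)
\textbf{The forward direction is a genuine gap, and it cannot be closed, because that implication is false as stated.}

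Your converse direction is correct and is the paper's argument. The one step you leave implicit is that the subbrace generated by $f(Y_0)$ surjects onto $\mathcal{G}(Y,r_Y)$. This holds because its image under restriction is a subbrace containing the elements coming from $Y_0$, and these generate $\mathcal{G}(Y,r_Y)$ by Proposition~\ref{prop:fin-gen_sol->fin-gen_skew}.

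The forward direction, however, is not proved. You correctly observe that Proposition~\ref{prop:subsol-subbrace}(2) makes $\mathcal{G}(Y,r_Y)$ a quotient of $H=\langle(\lambda_y,\rho_y^{-1})\mid y\in Y\rangle$, not the other way round, and then you only sketch repairs. None of them can succeed. Here is a counterexample.

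Let $\mathfrak{X}$ be the class of finite skew braces and $\mathfrak{Y}$ the class of solutions with finite permutation skew brace. Let $X=\bigsqcup_{n\geq 1}C_n$ with $|C_n|=n$, let $\lambda$ act as an $n$-cycle on each $C_n$, and put $\rho=\mathrm{id}_X$.
\begin{itemize}
\item The subsolutions of this Lyubashenko solution are exactly the unions of some of the $C_n$.
\item Every finite subset lies in a finite union $Y$ of them. Then $\mathcal{G}(Y,r_Y)=\langle(\lambda|_Y,\mathrm{id}_Y)\rangle$ is cyclic of order the least common multiple of the cycle lengths. So $(X,r)$ is locally $\mathfrak{Y}$, and even locally finite.
\item But $\lambda$ has infinite order, so $\mathcal{G}(X,r)=\langle(\lambda,\mathrm{id}_X)\rangle$ is an infinite one-generated skew brace. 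Hence it is not locally $\mathfrak{X}$.
\end{itemize}
This is exactly the failure of the kernel condition you isolated. A suitable power of $\lambda$ is trivial on $Y$ but not on $X$, and restriction to any subsolution lying in $\mathfrak{Y}$ is never injective on $H$.

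The paper handles this direction by claiming that $H$ ``is a quotient of $\mathcal{G}(Y,r_Y)$ by Proposition~\ref{prop:subsol-subbrace}''. That is precisely the reversed arrow you flagged, so the paper's proof has the same gap. The example above also refutes the forward direction of the later theorem on locally finite solutions, which reuses this reasoning.

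What does hold is your second alternative without the factorisation step. The universal property applied to $f|_Y$ always gives an epimorphism $G(Y,r_Y)\twoheadrightarrow H$. Hence the forward direction is valid under the extra hypothesis that $(Y,r_Y)\in\mathfrak{Y}$ implies $G(Y,r_Y)\in\mathfrak{X}$. For the (hyper)multipermutational classes this hypothesis is supplied by Theorem~\ref{theo:multeq} and its hypermultipermutational analogue. So the corollary stated after the theorem survives.
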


\begin{proof}
Suppose that $(X,r)$ is a locally $\mathfrak{Y}$ solution and let $F$ be a finite subset of $\mathcal{G}(X,r)$, then each element of $F$ is a finite product of elements of the type $(\lambda_x,\rho_x^{-1})$, which are induced by a finite number of elements of $X$. The latter are contained in a subsolution $Y\in \mathfrak{Y}$. Thus, $\mathcal{G}(Y,r_Y)\in \mathfrak{X}$. Moreover, $\langle (\lambda_y, \rho_y^{-1})\mid y\in Y\rangle \in \mathfrak{X}$, as it is a quotient of $\mathcal{G}(Y,r_Y)$ by Proposition~\ref{prop:subsol-subbrace}, and $F \subseteq\langle (\lambda_y, \rho_y^{-1}) \mid y\in Y\rangle$.

Suppose that $\mathcal{G}(X,r)$ is a locally $\mathfrak{X}$ skew brace, and let $Y$ be a finitely generated subsolution. By Proposition~\ref{prop:fin-gen_sol->fin-gen_skew}, $\mathcal{G}(Y,r_Y)$ is a finitely generated skew brace, that is also isomorphic to a section of $\mathcal{G}(X,r)$. Hence $\mathcal{G}(Y,r_Y)\in\mathfrak{X}$, and therefore, $Y$ is in $\mathfrak{Y}$.
\end{proof}

\begin{cor}
Let $(X,r)$ be a solution. Then, the following are equivalent:
\begin{enumerate}
\item $(X,r)$ is a locally (hyper)multipermutational;
\item $G(X,r)$ is a locally (hyper)multipermutational skew brace;
\item  $\mathcal{G}(X,r)$ is a locally (hyper)multipermutational skew brace.
\end{enumerate}
\end{cor}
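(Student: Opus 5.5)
The plan is to get (1)$\Leftrightarrow$(3) from Theorem~\ref{teo:Xlocall<->G(X,r)locall}, and then to connect (2) with (3) through the quotient $G(X,r)\to\mathcal{G}(X,r)$, whose kernel lies in the socle.

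For (1)$\Leftrightarrow$(3), take $\mathfrak{X}$ to be the class of (hyper)multipermutational skew braces and $\mathfrak{Y}$ the class of (hyper)multipermutational solutions. The class $\mathfrak{X}$ is closed under images: if $I\unlhd B$, then $\Soc_\alpha(B)I/I\leq\Soc_\alpha(B/I)$ by transfinite induction. It is also closed under subbraces: if $S\leq B$, then $\Soc_\alpha(B)\cap S\leq \Soc_\alpha(S)$, since the socle condition $\lambda_a=\rho_a=\mathrm{id}$ modulo the previous term restricts to $S$. The hypothesis ``$(X,r)\in\mathfrak{Y}$ iff $\mathcal{G}(X,r)\in\mathfrak{X}$'' is exactly Theorem~\ref{theo:multeq} in the multipermutational case and the preceding theorem in the hypermultipermutational case. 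Hence Theorem~\ref{teo:Xlocall<->G(X,r)locall} applies directly. In the multipermutational case the notion ``locally'' for skew braces means that finitely generated subbraces have finite level. This agrees with the definition used in the theorem once we use closure under subbraces, because a finite subset lies in some subbrace of $\mathfrak{X}$ exactly when the subbrace it generates lies in $\mathfrak{X}$.

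For (2)$\Rightarrow$(3), $\mathcal{G}(X,r)$ is a quotient of $G(X,r)$. The image of a finitely generated subbrace of $\mathcal{G}(X,r)$ can be lifted by choosing preimages of its finitely many generators. This gives a finitely generated subbrace of $G(X,r)$ mapping onto it, and closure under images finishes the step.

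For (3)$\Rightarrow$(2), let $H=\langle g_1,\dots,g_k\rangle\leq G(X,r)$ and $K=\ker\phi_{G_X}\leq\Soc(G(X,r))$. The image $HK/K$ is finitely generated in $\mathcal{G}(X,r)$, hence lies in $\mathfrak{X}$, and $HK/K\cong H/(H\cap K)$. Since $H\cap K\leq \Soc(G)\cap H\leq\Soc(H)$, we get $H/\Soc(H)$ as a quotient of $H/(H\cap K)$. Next we show that $\Soc_{\alpha+1}(H)/\Soc(H)\supseteq\Soc_\alpha(H/\Soc(H))$, by induction using that $\Soc$ of a quotient by the socle gives the next term. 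Then $H/\Soc(H)\in\mathfrak{X}$ implies $H\in\mathfrak{X}$, with level increasing by at most one. The main obstacle is this last identification of the socle series of $H/\Soc(H)$ with the shifted series of $H$. It should follow from the recursive definition $\Soc_{\alpha+1}(H)/\Soc_\alpha(H)=\Soc(H/\Soc_\alpha(H))$, together with a transfinite induction showing that $\Soc_\alpha(H/\Soc(H))=\Soc_{1+\alpha}(H)/\Soc(H)$.
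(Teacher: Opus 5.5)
Your overall route is the one the paper intends: the corollary is an application of Theorem~\ref{teo:Xlocall<->G(X,r)locall} together with Theorem~\ref{theo:multeq} and its hypermultipermutational analogue. Several of your ingredients are correct:
\begin{itemize}
\item your proof that (hyper)multipermutational skew braces are closed under subbraces and images;
\item your lifting argument for (2)$\Rightarrow$(3);
\item your argument for (3)$\Rightarrow$(2), using $\ker\phi_{G_X}\le\Soc(G(X,r))$ and the shift $\Soc_\alpha(H/\Soc(H))=\Soc_{1+\alpha}(H)/\Soc(H)$.
\end{itemize}
The last two make explicit what the paper leaves unsaid about item (2).

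\textbf{The gap is in (1)$\Rightarrow$(3).} You inherit it from the cited theorem. Its forward direction (locally $\mathfrak{Y}$ implies $\mathcal{G}(X,r)$ locally $\mathfrak{X}$) is not valid under the stated hypotheses. The paper's proof asserts that $\langle(\lambda_y,\rho_y^{-1})\mid y\in Y\rangle\le\mathcal{G}(X,r)$ is a quotient of $\mathcal{G}(Y,r_Y)$. It is the other way round: restricting permutations to $Y$ maps this subbrace onto $\mathcal{G}(Y,r_Y)$, and the action on $X\setminus Y$ is lost.

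Here is a counterexample. Let $X=\bigsqcup_{n\ge1}\mathbb{Z}/n\mathbb{Z}$, with the Lyubashenko solution given by $f$ the cyclic shift on each component and $g=\mathrm{id}_X$.
\begin{itemize}
\item Subsolutions are unions of components, so every finitely generated subsolution is finite and has finite permutation skew brace.
\item However, $\mathcal{G}(X,r)=\langle(f,\mathrm{id}_X)\rangle$ is one-generated and infinite, since $f$ has cycles of every length.
\item Take $\mathfrak{X}$ to be the finite skew braces and $\mathfrak{Y}=\{(X,r)\mid\mathcal{G}(X,r)\text{ finite}\}$. These satisfy all hypotheses of Theorem~\ref{teo:Xlocall<->G(X,r)locall}, but its conclusion fails.
\end{itemize}
So saying the theorem ``applies directly'' does not by itself justify (1)$\Rightarrow$(3).

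\textbf{The repair uses only what you already have.} Prove (1)$\Rightarrow$(2) directly.
\begin{itemize}
\item Let $H\le G(X,r)$ be finitely generated. Its generators are words in $\iota(F)^{\pm1}$ for some finite $F\subseteq X$.
\item By (1), $F$ lies in a (hyper)multipermutational subsolution $Y$.
\item By Theorem~\ref{theo:multeq} (resp.\ its hypermultipermutational analogue), $G(Y,r_Y)$ is (hyper)multipermutational.
\item By Proposition~\ref{prop:subsol-subbrace}(1), $G(Y,r_Y)$ maps onto $\langle Y\rangle\supseteq H$.
\item Closure under images and subbraces gives $H\in\mathfrak{X}$.
\end{itemize}
The reverse direction of Theorem~\ref{teo:Xlocall<->G(X,r)locall} is sound: a finitely generated section of a locally $\mathfrak{X}$ skew brace lies in $\mathfrak{X}$. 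Together with your (2)$\Rightarrow$(3), this closes the cycle (1)$\Rightarrow$(2)$\Rightarrow$(3)$\Rightarrow$(1). Your (3)$\Rightarrow$(2) then becomes redundant but remains correct.
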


Theorem~\ref{teo:Xlocall<->G(X,r)locall} cannot be used for the class of finite solutions, because there exist infinite solutions with finite permutation skew brace. However, the result for the locally finite case can still be recovered in this setting.

\begin{theo}
Let $(X,r)$ be a solution. Then, $\mathcal{G}(X,r)$ is a locally finite skew brace if and only if $(X,r)$ is a locally finite solution.
\end{theo}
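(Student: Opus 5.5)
We prove the two implications separately, using Proposition~\ref{prop:fin-gen_sol->fin-gen_skew} for one direction and the definition of $\mathcal{G}(X,r)$ as a group of pairs of permutations for the other.

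\emph{Locally finite solution $\Rightarrow$ locally finite $\mathcal{G}(X,r)$.} Let $F$ be a finite subset of $\mathcal{G}(X,r)$. Each element of $F$ is a finite product of generators $(\lambda_x,\rho_x^{-1})^{\pm1}$. Hence only finitely many $x\in X$ occur, and they form a finite set $X_0$. Let $Y=\bar X_0$. By hypothesis $Y$ is finite and $(Y,r_Y)$ is a subsolution.

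Consider $H=\langle (\lambda_y,\rho_y^{-1})\mid y\in Y\rangle$. By Remark~\ref{nota:lambda_actions}, $\lambda^{\mathcal G}$ and $\rho^{\mathcal G}$ act on these generators through $\lambda_y,\rho_y$, and these preserve $Y$. So the set $\{(\lambda_y,\rho_y^{-1})\mid y\in Y\}$ is invariant under these actions, and $H$ contains $F$. We may use that $H$ is a quotient of $\mathcal{G}(Y,r_Y)$, as in the proof of Theorem~\ref{teo:Xlocall<->G(X,r)locall} via Proposition~\ref{prop:subsol-subbrace}. Since $Y$ is finite, $\mathcal{G}(Y,r_Y)\le \mathrm{Sym}_Y\times\mathrm{Sym}_Y$ is finite, so $H$ is finite.

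The identification of $H$ as a subbrace needs care: the subbrace generated by $F$ lies inside a quotient image of $\mathcal{G}(Y,r_Y)$. This image comes from the map $G(Y,r_Y)\to\langle Y\rangle\subseteq G(X,r)\to\mathcal{G}(X,r)$, and its kernel contains $\ker\phi_{G_Y}$. Indeed, if $w\in G_Y$ acts trivially on $Y$, it need not act trivially on $X$. \textbf{This is the step I expect to be the main obstacle.} To handle it, I would rely on Proposition~\ref{prop:subsol-subbrace}(2), taken as given. Alternatively, I would directly bound the subbrace generated by $F$: it lies in the image of $\langle Y\rangle\le G(X,r)$, a quotient of the finitely generated brace $G(Y,r_Y)$.

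One can then argue as follows. The additive group of $\langle Y\rangle$ modulo the image of $\ker\phi$ is generated by the images of $Y$. Its multiplicative group embeds into pairs of permutations of $X$ generated by $\lambda_y,\rho_y$ for $y\in Y$. Each such permutation is determined by $y\in Y$, so only finitely many generators occur. It remains to show this group is finite. That follows if $\mathcal{G}(\bar{Y}')$ is finite for finite subsolutions $\bar Y'$ containing $Y$, which I would reduce to the section statement of Proposition~\ref{prop:subsol-subbrace}.

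\emph{Locally finite $\mathcal{G}(X,r)$ $\Rightarrow$ locally finite solution.} Let $Y_0$ be finite and $Y=\bar Y_0$. By Proposition~\ref{prop:fin-gen_sol->fin-gen_skew}, $\mathcal{G}(Y,r_Y)$ is finitely generated, and it is a section of $\mathcal{G}(X,r)$, hence finite. In fact it is enough that the subbrace $K$ of $\mathcal{G}(X,r)$ generated by the images $(\lambda_y,\rho_y^{-1})$, $y\in Y_0$, is finite. The group $K_\cdot$ of pairs of permutations then yields a finite group $\Gamma=\langle \lambda_y,\rho_y\mid y\in Y\rangle\le\mathrm{Sym}_X$. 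By construction $Y$ is the smallest subset containing $Y_0$ that is closed under $\lambda_x^{\pm1}$ and $\rho_x^{\pm1}$ for $x$ in the set itself.

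The key observation is an induction on the closure process. Every element of $Y$ is obtained from $Y_0$ by applying words in $\lambda_x^{\pm1},\rho_x^{\pm1}$, where each $x$ is an earlier element. By Remark~\ref{nota:lambda_actions}, the pair attached to $\lambda_x(y)$ is $\lambda^{\mathcal G}$ applied to the pairs of $x$ and $y$, and similarly for $\rho$. Hence every pair $(\lambda_z,\rho_z^{-1})$ with $z\in Y$ lies in $K$, so every such permutation lies in $\Gamma$. Therefore $Y\subseteq \Gamma\cdot Y_0$, which is finite. This concludes the proof.
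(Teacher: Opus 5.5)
Your direction ``$\mathcal{G}(X,r)$ locally finite $\Rightarrow$ $(X,r)$ locally finite'' is essentially right once one step is repaired. The direction ``$(X,r)$ locally finite $\Rightarrow$ $\mathcal{G}(X,r)$ locally finite'' has a genuine gap exactly at the step you flagged, and no argument can close it. The kernel of $G(Y,r_Y)\to\langle Y\rangle\to\mathcal{G}(X,r)$ is \emph{contained in} $\ker\phi_{G_Y}$, not the other way round. So restriction to $Y$ maps $H=\langle(\lambda_y,\rho_y^{-1})\mid y\in Y\rangle$ \emph{onto} $\mathcal{G}(Y,r_Y)$. Proposition~\ref{prop:subsol-subbrace}(2) therefore bounds $\mathcal{G}(Y,r_Y)$ by $H$, not $H$ by $\mathcal{G}(Y,r_Y)$. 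Your fallback, finiteness of $\mathcal{G}(Y',r_{Y'})$ for every finite subsolution $Y'\supseteq Y$, only shows that $H$ is residually finite.

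In fact this implication is false. Let $X=\bigsqcup_{n\ge 1}\mathbb{Z}/n\mathbb{Z}$, let $f$ add $1$ in each component, and let $g=\mathrm{id}_X$. Then $r(x,y)=(f(y),x)$ is a Lyubashenko solution. By Lemma~\ref{lem:lyu} its subsolutions are unions of components, so every finite subset lies in a finite subsolution and $(X,r)$ is locally finite. But the multiplicative group of $\mathcal{G}(X,r)$ is $\langle(f,\mathrm{id}_X)\rangle$, which is infinite cyclic because $f$ has infinite order. This skew brace is generated by a single element, so it is not locally finite. For the one-point subsolution $Y=\mathbb{Z}/1\mathbb{Z}$, $\mathcal{G}(Y,r_Y)$ is trivial while $H\cong\mathbb{Z}$. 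The paper's proof of this direction has the same defect: it defers to the proof of Theorem~\ref{teo:Xlocall<->G(X,r)locall}, which asserts that $H$ is a quotient of $\mathcal{G}(Y,r_Y)$.

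In the other direction, your use of Remark~\ref{nota:lambda_actions} is correct: the finite subbrace $K$ generated by the $(\lambda_y,\rho_y^{-1})$, $y\in Y_0$, contains $(\lambda_z,\rho_z^{-1})$ for every $z\in Y=\overline{Y_0}$. The next claim, that $\Gamma=\langle\lambda_z,\rho_z\mid z\in Y\rangle\le\mathrm{Sym}_X$ is finite, is not justified. Finiteness of $K$ only makes the two coordinate projections finite, namely $\Lambda=\langle\lambda_z\mid z\in Y\rangle$ and $P=\langle\rho_z\mid z\in Y\rangle$. Two finite subgroups of $\mathrm{Sym}_X$ can generate an infinite group, for instance two involutions generating an infinite dihedral group.

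What is missing is the reordering the paper obtains from the middle component of the braid relation,
\[ \rho_{\lambda_{\rho_y(x)}(z)}(\lambda_x(y))=\lambda_{\rho_{\lambda_y(z)}(x)}(\rho_z(y)). \]
For $u,w,z\in Y$, take $y=u$ and $x=\rho^{-1}_{\lambda_u(z)}(w)\in Y$. This gives $\lambda_w\rho_z(u)=\rho_{z'}\lambda_x(u)$ with $x,z'\in Y$. Iterating, each set $P\Lambda u$ with $u\in Y$ is closed under all $\lambda_w^{\pm1}$ and $\rho_z^{\pm1}$ for $w,z\in Y$. Hence $Y=\bigcup_{u\in Y_0}P\Lambda u$ and $|Y|\le|Y_0|\,|P|\,|\Lambda|$. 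This bound is what you need in place of finiteness of $\Gamma$, and with it you obtain exactly the true half of the statement.
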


\begin{proof}
If $(X,r)$ is a locally finite solution, then we can follow the same reasoning as Theorem~\ref{teo:Xlocall<->G(X,r)locall} to prove that $\mathcal{G}(X,r)$ must be locally finite.

Assume that $\mathcal{G}(X,r)$ is locally finite. If $F$ is a finitely generated subsolution of $X$, then by Proposition~\ref{prop:fin-gen_sol->fin-gen_skew}, $\mathcal{G}(F,r_{|_F})$ is finitely generated, and therefore, it is finite.

Write $F=\overline{\{y_1,\dots,y_n\}}$, for some $y_1, \dots, y_n \in X$. Then, for every $i\in\{1,\dots ,n\}$, we can define:
\begin{align*} 
A_i = & \{(\lambda_{y_{i_1}}^{\varepsilon_{i_1}}\circ \rho_{y_{j_1}}^{\varepsilon_{j_1}}\circ \dots\circ \lambda_{y_{i_m}}^{\varepsilon_{i_m}}\circ\rho_{y_{j_m}}^{\varepsilon_{j_m}})(y_i) \mid \\
 & \mid m\in\mathbb{N},\  i_t,j_t\in\{1,\dots,n\}, \  \varepsilon_{i_t},\varepsilon_{j_t}\in \{-1,0,1\}\, \forall t\in\{1,\dots,m\}\}.
\end{align*}
Since $(F, r_{|_F})$ is a solution, the second component in the equality of equation \eqref{eq:braided_eq} yields that for every $w,z\in F$,
\[ \rho_{\lambda_{\rho_{y_i}(w)}(z)}(\lambda_w(y_i))=\lambda_{\rho_{\lambda_{y_i}(z)}(w)}(\rho_z(y_i))\]
Call $z':=\lambda_{y_i}(z)$, and consider $w'\in F$ the only element such that $\rho_{z'}(w')=w$. It follows that
\[ \lambda_w(\rho_z(y_i))=\rho_{\lambda_{\rho_{y_i}(w')}(z)}(\lambda_{w'}(y_i)).\]
Therefore, we can write 
\[ A_i=\{(\overline{\rho}\circ\overline{\lambda})(y_i) \mid \overline{\rho}\in\langle \rho_{y_j} \mid j=1,\dots,n\rangle, \ \overline{\lambda}\in\langle \lambda_{y_j} \mid j=1,\dots,n\rangle \}.\]

Since $\mathcal{G}(F,r_{|_F})$ is finite, then both $\langle \rho_{y_j} \mid j=1,\dots,n\rangle$ and $\langle \lambda_{y_j} \mid j=1,\dots,n\rangle$ are finite. Hence, $A_i$ is finite, and therefore, $F=\bigcup_{i=1}^nA_i$ is also finite.
\end{proof}

We bring this section to a close by introducing and studying chain conditions for solutions.

\begin{defi}
Let $(X,r)$ be a solution. Then, we say that $(X,r)$ satisfies
\begin{enumerate}
\item $\emph{max-sub}$ (respectively $\emph{min-sub}$) if the set of its subsolutions satisfies the maximal (respectively minimal) condition;
\item $\emph{max-con}$ (respectively $\emph{min-con}$) if the set of its congruences satisfies the maximal (respectively minimal) condition.
\end{enumerate}
\end{defi}

\begin{prop}
\label{prop:maxsol}
$(X,r)$ satisfies max-sub if and only if all its subsolutions are finitely generated.
\end{prop}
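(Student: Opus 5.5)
The plan is to copy the classical argument that a group satisfies the maximal condition on subgroups exactly when all its subgroups are finitely generated. The only inputs are two facts about the closure operator $Y\mapsto \bar Y$ of the definition. First, the intersection of subsolutions containing a set is again a subsolution, so $\bar Y$ is the smallest subsolution containing $Y$. Second, the union of a chain of subsolutions is a subsolution. I will also use that subsolutions of a subsolution $(Y,r)$ are exactly the subsolutions of $(X,r)$ contained in $Y$, because the condition $r(Z\times Z)=Z\times Z$ does not depend on the ambient set.

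I will verify the two facts first. For the intersection: if $(S_i,r)$ are subsolutions and $S=\bigcap S_i$, then $r(S\times S)\subseteq \bigcap r(S_i\times S_i)=S\times S$. Since $r$ is bijective, $r(S\times S)=\bigcap_i r(S_i\times S_i)=S\times S$. For a chain $\{S_i\}$ with union $U$: every pair $(x,y)\in U\times U$ lies in some $S_i\times S_i$, so $r(U\times U)=\bigcup r(S_i\times S_i)=U\times U$. A small point is that a subsolution should be non-empty. I would either allow the empty subsolution for chain purposes or note that it does not affect the argument.

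\emph{Max-sub implies finite generation.} Let $(Y,r)$ be a subsolution. Consider the family of subsolutions $\bar F$ with $F\subseteq Y$ finite; each lies inside $Y$. By max-sub this family has a maximal element $\bar F_0$. If some $y\in Y\setminus\bar F_0$ existed, then $\overline{F_0\cup\{y\}}$ would be a strictly larger member of the family, which is impossible. So $Y=\bar F_0$, and $Y$ is finitely generated, with closure computed inside $Y$ or inside $X$ alike.

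\emph{Finite generation implies max-sub.} Take an ascending chain $Y_1\subseteq Y_2\subseteq\cdots$ of subsolutions, and let $U$ be its union, which is a subsolution by the second fact. By hypothesis $U=\bar F$ with $F$ finite. Then $F$ is contained in some $Y_n$, so $U=\bar F\subseteq Y_n$ because $Y_n$ is a subsolution containing $F$. Hence the chain stabilises.

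The only real obstacle is the bookkeeping in the two closure facts, in particular using the bijectivity of $r$ to get equality rather than mere inclusion for intersections. Once those are in place, both directions are routine.
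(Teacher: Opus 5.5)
Your proof is correct and follows essentially the same route as the paper. For one direction, the union of an ascending chain is a finitely generated subsolution, so its generators already lie in some term of the chain. For the other, the paper builds the strictly ascending chain $\overline{\{x_1,\dots,x_n\}}$ from a non-finitely generated subsolution, while you take a maximal element among the $\bar F$; this is the same argument. Your explicit checks that intersections and unions of chains of subsolutions are again subsolutions (using injectivity of $r$) fill in details the paper leaves implicit.
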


\begin{proof}
Assume that $(X,r)$ is a solution satisfying $\operatorname{max-sub}$ and let $Y$ be a subsolution. By a way of contradiction, assume that $Y$ is not finitely generated. Then, it is possible to construct an ascending series of subsolutions $(\overline{\{x_1,\dots,x_n\}})_{n\in\mathbb{N}}$, which is absurd.

On the other hand, suppose that all subsolutions are finitely generated and consider  $\{Y_n\}_{n\in \mathbb{N}}$ an ascending series of subsolutions of $(X,r)$. Clearly, $\bigcup_{n\in\mathbb{N}}Y_n$ is a subsolution that is finitely generated. Hence, the previous series must stabilise at a certain number~$k$.
\end{proof}

Bearing in mind results of the previous section, the following question naturally arises:
\begin{quest}
Is max-con equivalent to max-sub for locally multipermutational solutions?
\end{quest}

We give a positive answer to this question for the class of Lyubashenko solutions. Indeed, we show that this class of solutions presents a good behaviour within chain conditions.

\emph{For the rest of the section, $(X,r)$ denotes a Lyubashenko solution, and for every $x,y\in X$, we write $r(x,y)=(f(y),g(x))$ where $f$ and $g$ are two commuting permutations of~$X$.}

\begin{lem}
\label{lem:lyu}
Let $(X,r)$ be a Lyubashenko solution and let $Y$ be a subsolution of $X$. Then, $Y$ is a union of orbits of the action of the group $\langle f,g\rangle\leq \operatorname{Sym}_X$.
\end{lem}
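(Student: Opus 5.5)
The plan is to unwind the definition of a subsolution for a Lyubashenko solution and show that $Y$ is closed under $f$, $g$, $f^{-1}$ and $g^{-1}$. Then $Y$ is invariant under the group $\langle f,g\rangle$, and hence is a union of its orbits. Recall that $(Y,r)\leq(X,r)$ means $r(Y\times Y)=Y\times Y$, where $r(x,y)=(f(y),g(x))$.

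First I show $f(Y)\subseteq Y$ and $g(Y)\subseteq Y$. Since $Y$ is non-empty (solutions are defined on non-empty sets), fix $y_0\in Y$. For any $y\in Y$ we have $r(y_0,y)=(f(y),g(y_0))\in Y\times Y$, so $f(y)\in Y$. Symmetrically, $r(y,y_0)=(f(y_0),g(y))$ gives $g(y)\in Y$.

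Next I prove the reverse inclusions using the surjectivity part of $r(Y\times Y)=Y\times Y$. Given $y\in Y$, the pair $(y,y_0)$ lies in $r(Y\times Y)$. So there are $a,b\in Y$ with $(f(b),g(a))=(y,y_0)$, which means $f^{-1}(y)=b\in Y$. Similarly, from $(y_0,y)=r(a',b')$ we get $g^{-1}(y)=a'\in Y$. Therefore $f(Y)=Y$ and $g(Y)=Y$.

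Finally, since $Y$ is invariant under $f^{\pm1}$ and $g^{\pm1}$, it is invariant under every word in these permutations, that is, under all of $\langle f,g\rangle$. For each $y\in Y$ the orbit $\langle f,g\rangle\cdot y$ is then contained in $Y$, so $Y=\bigcup_{y\in Y}\langle f,g\rangle\cdot y$. There is no real obstacle here. The only point needing care is that the invariance under the inverses $f^{-1},g^{-1}$ comes from the equality $r(Y\times Y)=Y\times Y$ rather than a mere inclusion. This matters for infinite $X$, where $f(Y)\subseteq Y$ does not force $f(Y)=Y$.
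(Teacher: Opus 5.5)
Your proof is correct and uses the same idea as the paper. The paper's key step, that the orbit $\{(f^n\circ g^m)(x)\mid n,m\in\mathbb{Z}\}$ lies in every subsolution containing $x$, is just the invariance of $Y$ under $f^{\pm1}$ and $g^{\pm1}$, which you check explicitly (and you rightly use the equality $r(Y\times Y)=Y\times Y$ to get the inverses). The paper also notes that each orbit is itself the subsolution $\overline{\{x\}}$, so that $Y=\bigcup_{x\in S}\overline{\{x\}}$; this is not needed for the statement but is used later (e.g. in Corollary~\ref{cor:lyusol}), and it follows at once from your argument, since any non-empty $f$- and $g$-invariant set $Y$ satisfies $r(Y\times Y)=f(Y)\times g(Y)=Y\times Y$.
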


\begin{proof}
Let $x\in X$, and set $A=\{(f^n\circ g^m)(x) \mid n,m\in\mathbb{Z}\}$ the orbit of $x$ with respect to $\langle f,g\rangle$. Observe that $A$ must be contained in every subsolution of $(X,r)$ that contains~$x$.

Since $f$ and $g$ commute, $A$ is a subsolution as $A=\overline{\{x\}}$. Indeed, it follows that for every $y\in A$, $A=\overline{\{y\}}$. Therefore, for every subsolution $(Y,r)$, we can find a minimal subset of generators $S$ such that 
\[ Y=\overline{S}=\bigcup_{x\in S}\overline{\{x\}}. \qedhere \]
\end{proof}

\begin{cor}
\label{cor:lyusol}
    Let $(X,r)$ be a Lyubashenko solution. Then, the following are equivalent:
    \begin{enumerate}
        \item $(X,r)$ is finitely generated;
        \item $(X,r)$ satisfy $\operatorname{max-sub}$;
        \item $(X,r)$ satisfy $\operatorname{min-sub}$;
        \item $X$ has a finite number of orbits of the action of the group $\langle f,g\rangle$.
    \end{enumerate}
\end{cor}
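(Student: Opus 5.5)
The plan is to reduce everything to statements about the set $\Omega$ of orbits of $H=\langle f,g\rangle$ on $X$, using Lemma~\ref{lem:lyu}. That lemma gives two facts. First, the subsolutions of $(X,r)$ are exactly the unions of $H$-orbits: every subsolution is such a union, and conversely any union of orbits $Y$ satisfies $f(Y)=Y=g(Y)$, hence $r(Y\times Y)=Y\times Y$. Second, for each $x\in X$, $\overline{\{x\}}$ is the orbit of $x$, and $\overline{S}=\bigcup_{x\in S}\overline{\{x\}}$. So the lattice of subsolutions is isomorphic to the power set of $\Omega$, and the four conditions become statements about $\Omega$.

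I will prove the cycle (1)$\Rightarrow$(4)$\Rightarrow$(2) and (4)$\Rightarrow$(3), together with (2)$\Rightarrow$(1) and (3)$\Rightarrow$(4).
\begin{itemize}
\item (1)$\Rightarrow$(4): if $X=\overline{Y}$ with $Y$ finite, then $X=\bigcup_{y\in Y}\overline{\{y\}}$ is a union of at most $|Y|$ orbits.
\item (4)$\Rightarrow$(1): picking one representative $x_i$ from each of the finitely many orbits gives $X=\overline{\{x_1,\dots,x_k\}}$.
\item (4)$\Rightarrow$(2) and (4)$\Rightarrow$(3): if $\Omega$ is finite, there are only $2^{|\Omega|}$ subsolutions, so every chain is finite and both max-sub and min-sub hold.
\item (2)$\Rightarrow$(1): this is immediate from Proposition~\ref{prop:maxsol} applied to $X$ itself. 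Alternatively, it follows from the contrapositive argument below.
\item (3)$\Rightarrow$(4): argue by contrapositive. If $\Omega$ is infinite, choose distinct orbits $O_1,O_2,\dots$. Then $Y_n=X\setminus\bigcup_{i\le n}O_i$ is a union of orbits for each $n$, hence a subsolution, and $Y_1\supsetneq Y_2\supsetneq\cdots$ is a strictly descending chain, contradicting min-sub. In the same way, $\bigcup_{i\le n}O_i$ gives a strictly ascending chain, contradicting max-sub.
\end{itemize}

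The only delicate point is the convention that subsolutions are non-empty, since a solution requires a non-empty set. In the descending chain, every $Y_n$ must therefore be non-empty. To guarantee this, I will enumerate only countably many orbits $O_1,O_2,\dots$ while keeping infinitely many orbits unused. For example, take the $O_i$ from a countable subfamily of $\Omega$ whose complement in $\Omega$ is still infinite, which is possible because $\Omega$ is infinite. Then each $Y_n$ contains infinitely many orbits and is non-empty. This bookkeeping is the main obstacle; everything else is a direct translation through Lemma~\ref{lem:lyu}.
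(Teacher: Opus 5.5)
Your proposal is correct and follows the paper's route: by Lemma~\ref{lem:lyu} the subsolutions are exactly the non-empty unions of $\langle f,g\rangle$-orbits, so all four conditions reduce to finiteness of the orbit set. The paper phrases this as each condition being equivalent to having finitely many subsolutions. The non-emptiness bookkeeping you flag as the main obstacle is unnecessary: if there are infinitely many orbits, removing finitely many of them always leaves a non-empty union.
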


\begin{proof}
This is a direct consequence of Lemma~\ref{lem:lyu}, as each of the four properties is equivalent to having only a finite number of subsolutions.
\end{proof}

Indeed, our next theorem claims that for max-con we can say more.
\begin{theo}
\label{teo:max-con-equiv-lyusol}
Let $(X,r)$ be a Lyubashenko solution. All properties of Corollary~\ref{cor:lyusol} are equivalent to $\operatorname{max-con}$.
\end{theo}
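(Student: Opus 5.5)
The plan is to translate congruences of a Lyubashenko solution into invariant equivalence relations for the group $H=\langle f,g\rangle\leq\operatorname{Sym}_X$. Then I show that $\operatorname{max-con}$ is equivalent to condition~4 of Corollary~\ref{cor:lyusol}, namely that $X$ has finitely many $H$-orbits.

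\textbf{Step 1 (congruences as $H$-invariant relations).} Since $\lambda_x=f$ and $\rho_y=g$ for all $x,y$, the defining condition for a congruence becomes
\[
\bigl(x_1\sim x_2 \text{ and } y_1\sim y_2\bigr)\iff\bigl(f(y_1)\sim f(y_2)\text{ and } g(x_1)\sim g(x_2)\bigr).
\]
Taking $x_1=x_2$ or $y_1=y_2$ shows that this says exactly that $x\sim y\iff f(x)\sim f(y)\iff g(x)\sim g(y)$. Hence the congruences of $(X,r)$ are precisely the equivalence relations $\sim$ on $X$ such that $x\sim y\iff h(x)\sim h(y)$ for every $h\in H$. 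Note that $H$ is abelian, because $f$ and $g$ commute, and it is $2$-generated. So $H$ is a finitely generated abelian group and satisfies the maximal condition on subgroups.

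\textbf{Step 2 ($\operatorname{max-con}$ implies finitely many orbits).} Suppose there are infinitely many orbits $O_1,O_2,\dots$, and put $U_n=O_1\cup\dots\cup O_n$. Let $\sim_n$ be the relation ``$x=y$, or $x,y\in U_n$''. Since $U_n$ is $H$-invariant, each $\sim_n$ is $H$-invariant, hence a congruence by Step~1. The chain $\sim_1\subsetneq\sim_2\subsetneq\cdots$ is strictly ascending, which contradicts $\operatorname{max-con}$. Together with Corollary~\ref{cor:lyusol}, this settles one direction.

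\textbf{Step 3 (finitely many orbits implies $\operatorname{max-con}$).} Write $X=O_1\cup\dots\cup O_k$ and fix $x_i\in O_i$. For a congruence $\sim$, set $L_i(\sim)=\{h\in H\mid h(x_i)\sim x_i\}$. By $H$-invariance this is a subgroup of $H$ containing $\operatorname{Stab}_H(x_i)$. Since $H$ is abelian, the restriction of $\sim$ to $O_i$ is $h(x_i)\sim h'(x_i)\iff h^{-1}h'\in L_i(\sim)$, so $\sim|_{O_i}$ is determined by $L_i$.

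Now let $\sim_1\subseteq\sim_2\subseteq\cdots$ be an ascending chain. The chains $L_i(\sim_n)$ are ascending chains of subgroups of the finitely generated abelian group $H$, so they all stabilise from some index $N$ on. Beyond $N$, the only remaining freedom is how classes in different orbits are glued. If $x_i\sim h(x_j)$, then $H$-invariance shows that the entire gluing of $O_i$ with $O_j$ is determined by this one relation. Moreover, the element $h$ is unique modulo $L_j$: two inequivalent choices would merge distinct classes inside $O_j$, enlarging $L_j$, which is impossible after stabilisation. Glued pairs remain glued further up the chain. So after index $N$, each of the finitely many pairs $(i,j)$ can change status at most once. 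Therefore the chain stabilises, and $(X,r)$ satisfies $\operatorname{max-con}$.

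The main obstacle is the bookkeeping in Step~3: making precise that, once the $L_i$ are fixed, a congruence is determined by a set of glued pairs of orbits, each with a unique matching. The key point is that a second, different matching between $O_i$ and $O_j$ would force a strict enlargement of some $L_j$.
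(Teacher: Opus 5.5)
Your proof is correct and follows the same strategy as the paper. The paper likewise builds congruences from orbits to get $\operatorname{max-con}\Rightarrow$ finitely many orbits, and uses the maximal condition on subgroups of a $2$-generated abelian group for the converse; its subgroups $R_h\leq\mathbb{Z}^2$ play the role of your $L_i\leq\langle f,g\rangle$. Your version is more careful at two points the paper glosses over. First, gluing $O_1,\dots,O_n$ into a single class guarantees a strictly ascending chain even when orbits are singletons; the paper collapses each orbit separately, which gives no strict increase for, e.g., an infinite twist solution. Second, your coset argument for gluing between different orbits supplies the justification missing from the paper's unexplained reduction to the case $k=1$.
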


\begin{proof}
Suppose that $(X,r)$ satisfies the maximal condition on congruences. Let $x_0\in X$. Then 
\[ X_0=\overline{\{x_0\}}=\{(f^n\circ g^m)(x_0)\mid n,m\in\mathbb{Z}\}.\]
Let $\sim_0$ be the equivalence relation that identifies all elements in $X_0$ with~$x_0$. By definition of $X_0$, $\sim_0$ is clearly a congruence on $(X,r)$, as $f(X_0) = g(X_0) = X_0$.

If $X_0 \neq X$, take $x_1\in X\setminus X_0$ and set $X_1:=\overline{\{x_1\}}$. Once, again we consider the congruence $\sim_1$ that identifies all elements of $X_0$ and $X_1$ with elements $x_0$ and $x_1$, respectively. If $(X,r)$ is not finitely generated, iterating this process we obtain a strictly ascending chain of congruences $\{\sim_n\}_{n\in \mathbb{N}_0}$ that does not stabilise, which is impossible.

    
On the other hand, assume that $(X,r)$ is finitely generated with 
\[ X=\overline{\{x_1,\dots,x_k\}}=\overline{\{x_1\}}\cup\dots\cup\overline{\{x_k\}},\]
for certain $x_1,\dots,x_k\in X$. By a way of contradiction, we can suppose that there exists $\{\sim_n\}_{n\in \mathbb{N}}$ a strictly ascending sequence of congruences of $(X,r)$, where $\sim_1$ contains strictly $\mathrm{diag}(X)$, the diagonal of $X \times X$.

Since $X$ is finitely generated and the sequence is strictly ascending, we can assume without loss of generality that $k = 1$, i.e. $X = \overline{\{x\}}$ for some $x\in X$. Call $f_h$, $g_h$ the permutations defined by the solution $X/{\sim_h}$ for every $h\in\mathbb{N}$. Let $y \in X$ with $y\neq x$ and $x \sim_1 y$. Since $X$ is one-generated, there exist $n_1,m_1\in \mathbb{Z}$, not both equal to zero, such that $y = (f^{n_1}\circ g^{m_1})(x)$. Thus, $x \sim_1 (f^{n_1}\circ g^{m_1})(x)$, and by definition of congruence, 
for every $s,t\in \mathbb{Z}$, 
\[ (f^s \circ g^t)(x) \sim_1 (f^{s+n_1}\circ g^{t+m_1})(x).\]
which means that $f_1^{n_1}\circ g_1^{m_1}=\operatorname{id}_{X/{\sim_1}}$. In general, that means that whenever we find that $x\sim_h (f^n\circ g^m)(x)$, then $f_h^n\circ g_h^m=\operatorname{id}_{X/{\sim_h}}$.

Define the subgroup $R_h=\langle a^n b^m \mid x\sim_h (f^n\circ g^m)(x)\rangle$ of the free abelian group of rank $2$, $F_{\mathrm{ab}}(\{a,b\}) = \langle a, b \rangle$. Then, for every $h\in\mathbb{N}$, it holds that
\[ \langle a,b\rangle/R_h \cong \langle f_h,g_h\rangle\leq \mathrm{Sym}_{X/\sim_h} \times \mathrm{Sym}_{X/\sim_h}.\]

But, if $\{\sim_n\}_{n\in \mathbb{N}}$ is strictly ascending, so is $\{R_h\}_{h\in \mathbb{N}}$, which is a contradiction because $\langle a,b\rangle$ satisfies the maximal condition.  
\end{proof}

The situation for min-con is different as we show in the following result.
\begin{theo}
\label{teo:min-con-lyusol}
Let $(X,r)$ be a Lyubashenko solution satisfying $\operatorname{min-con}$. Then, $(X,r)$ is finite.
\end{theo}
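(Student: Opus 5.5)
We argue in two steps: first that $X$ has only finitely many $\langle f,g\rangle$-orbits, and then that each orbit is finite. Both steps use the same device. From an infinite family of pieces of $X$ we build a strictly descending chain of congruences, contradicting min-con. Throughout we use that an equivalence relation $\sim$ on $X$ is a congruence of the Lyubashenko solution precisely when $x\sim y$ if and only if $f(x)\sim f(y)$ and $g(x)\sim g(y)$. Indeed, the defining condition only involves $\lambda_{x_1}(y_1)=f(y_1)$ and $\rho_{y_1}(x_1)=g(x_1)$, so it is equivalent to $\sim$ being invariant under $f^{\pm1},g^{\pm1}$.

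\textbf{Finitely many orbits.} Suppose the orbits $O_1,O_2,\dots$ are infinitely many (and pairwise distinct). For each $n$, let $\sim_n$ be the relation that collapses each of $O_n,O_{n+1},\dots$ to a point and leaves $O_1,\dots,O_{n-1}$ pointwise. Equivalently, $\sim_n$ identifies $x$ and $y$ when $x=y$ or when $x$ and $y$ lie in the same orbit $O_k$ with $k\ge n$. This relation is $f,g$-invariant, since $f$ and $g$ preserve every orbit, so $\sim_n$ is a congruence. The chain $\sim_1\supsetneq\sim_2\supsetneq\cdots$ is strictly descending as soon as infinitely many of the orbits have at least two points.

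If all but finitely many orbits are singletons, we use a different family. Let $A_n$ be a union of singleton orbits with $A_1\supsetneq A_2\supsetneq\cdots$, and let $\sim_n$ collapse $A_n$ to a single class. Because $f$ and $g$ fix every point of $A_n$, this relation is again invariant, hence a congruence. The chain is strictly descending, giving the contradiction in this case too.

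\textbf{Each orbit is finite.} By the first step, together with Corollary~\ref{cor:lyusol}, $X$ is a finite union of orbits. Any congruence of a single orbit $O$ extends to $X$ by taking the equality relation on the other orbits, and the extension is still invariant. So min-con passes to each orbit, and we may assume $X=\overline{\{x\}}$. Now $X\cong \mathbb{Z}^2/R$ as a $\mathbb{Z}^2$-set, where $\langle f,g\rangle$ acts as $A:=\mathbb{Z}^2/R$ and the stabiliser of $x$ is trivial in $A$. Note that $A$ is abelian, so all point stabilisers coincide and the action is regular.

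For each subgroup $H\le A$, the coset partition of $A$ by $H$ is invariant under translations by $A$, hence under $f$ and $g$, so it is a congruence. Descending chains of subgroups therefore give descending chains of congruences, and min-con forces $A$ to satisfy the minimal condition on subgroups. Since $A$ is a finitely generated abelian group, it is then finite: otherwise $A$ has an element of infinite order $a$, and $\langle a\rangle\supsetneq\langle a^2\rangle\supsetneq\langle a^4\rangle\supsetneq\cdots$ is an infinite descending chain. Hence each orbit is finite, and $X$ is finite.

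The main obstacle I expect is the first step, specifically making the descending chain strictly descending when orbits are singletons; handling that case separately via collapsing sets of fixed points, as above, resolves it. The second step is essentially the correspondence between congruences of a regular abelian action and its subgroups (at least in the direction from subgroups to congruences), which is routine.
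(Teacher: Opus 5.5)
Your proof is correct and is essentially the paper's argument: descending chains of "collapsing" congruences force finitely many $\langle f,g\rangle$-orbits, and on a single orbit the orbit relations of $\langle a^{2^k}\rangle$ form a strictly descending chain, where $a$ has infinite order (the paper takes $a=f$). The differences are cosmetic. The paper collapses the whole complement $X\setminus\overline{\{x_0,\dots,x_n\}}$ into one class, which avoids your case split on singleton orbits. It also writes down the specific chain directly instead of passing through the minimal condition on subgroups of the regularly acting group $\langle f,g\rangle$.
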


\begin{proof}
Let $x_0\in X$ and define $\sim_0$ as the equivalence relation that identifies all elements in $X\setminus \overline{\{x_0\}}$. Clearly, it is a congruence as $f(X\setminus \overline{\{x_0\}}) = g(X\setminus \overline{\{x_0\}}) = X\setminus \overline{\{x_0\}}$. If $\sim_0\neq \operatorname{diag}(X)$, then we can take an element $x_1\in X\backslash\overline{\{x_0\}}$, and analogously, define $\sim_1$ as the congruence that identifies all elements in $X\backslash\overline{\{x_0,x_1\}}$. Iterating this process, we obtain a descending sequence of congruences $\{\sim_n\}_{n\in\mathbb{N}_0}$ that stabilises at some $n_0 \in \mathbb{N}_0$. This means that $X = \overline{\{x_0,x_1, \dots, x_{n_0}\}}$, i.e. $(X,r)$ is finitely generated.

By Lemma~\ref{lem:lyu}, without loss of generality we can suppose that 
\[ X=\overline{\{x\}}=\{(f^n\circ g^m)(x)\mid n,m\in\mathbb{Z}\}.\]
Moreover, if both $f$ and $g$ have finite order, $X$ is clearly finite. Thus, without loss of generality assume also that $f$ has infinite order.

For every $n\in \mathbb{N}$, $f^{2^n}(x)\neq x$. Otherwise, there will be a positive integer $m$ such that 
\[ f^{2^m}((f^h\circ g^k)(x))=(f^h\circ g^k)(f^{2^m}(x))=(f^h\circ g^k)(x),\]
for every $h$, $k \in \mathbb{Z}$. Hence, $f^{2^m}=\operatorname{id}_X$, a contradiction.

For every $m\in \mathbb{N}$, we define the equivalence relation $\sim_m$ as follows:
\[ y\sim_m z \quad \Longleftrightarrow \quad \exists\, k\in\mathbb{Z} \ \text{such that}\ f^{k2^m}(y)=z.\]
Since $f$ and $g$ commutes it is easy to verify that $\sim_m$ is a congruence for every $m\in \mathbb{N}$. Moreover, by definition, $\{\sim_m\}_{m\in\mathbb{N}}$ forms a strictly descending sequence of congruences on $(X,r)$ that does not stabilise. Hence, we arrive to a final contradiction.
\end{proof}

Furthermore, our last example shows that finitely generated Lyubashenko solutions does not necessarily satisfy min-con.

\begin{ex}
\label{ex:lyusol-notmin}
Let $X=\mathbb{Z}\times\mathbb{Z}$, and consider the permutations $f((x_1,x_2))=(x_1+1,x_2)$ and $g((x_1,x_2))=(x_1,x_2+1)$. Clearly, $f$ and $g$ commute, so that $(X,r)$ is a Lyubashenko solution with $r(\mathbf{x}, \mathbf{y}) = (f(\mathbf{y}),g(\mathbf{x}))$, for every $\mathbf{x,y}\in X$. Moreover, $(X,r)$ is clearly one-generated as $(X,r) = (\overline{\{(0,0)\}}, r)$. Nevertheless, $(X,r)$ does not satisfy $\operatorname{min-con}$ as it is not finite.
\end{ex}
\section*{Acknowledgements}
A part of this research has been developed during a stay of the second, the third, and the fourth authors in the \emph{Centre International de Rencontres Math\'ematiques} (Aix-Marseille Universit\'e, Centre National de la Recherche Scientifique and Soci\'et\'e Math\'ematique de France) in the framework of the programme \emph{Recherche en r\'esidence}. We thank this centre for their support and their kindness during this stay. The second, the third, and the fourth authors have been supported by the grant PID2024-159495NB-I00, funded by MICIU/AEI/10.13039/501100011033 and by ERDF/EU. The second and the fourth authors are supported by the grant CIAICO/2023/007 from the
Conselleria d’Educaci\'o, Universitats i Ocupaci\'o, Generalitat Valenciana. 
Another part of this project has been developed during a sojourn of the first author at the \emph{Department de Matemàtiques} of the \emph{Universitat de València}, supported by a grant part of the Erasmus+ for Traineeship 2025/2026 Programme of the Università degli Studi della Campania "Luigi Vanvitelli", with code 2025-1-IT02-KA131-HED-000311161. The first author would like to thank the \emph{Department de Matemàtiques} of the \emph{Universitat de València} for the hospitality and the friendliness demonstrated during the period.
All authors are members of the nonprofit association ``Advances in Group Theory
and Applications''. We also thank Arne Van Antwerpen for his comments on the original draft.

\bibliographystyle{plain}

\end{document}